\newtheorem{defn}{Definition}[section]
\newtheorem{thm}[defn]{Theorem}
\newtheorem{pro}[defn]{Proposition}
\newtheorem{cor}[defn]{Corollary}
\newtheorem{rem}[defn]{Remark}
\newtheorem{lma}[defn]{Lemma}
\newtheorem{exa}[defn]{Example}
\def\N{{\rm I\kern-0.16em N}}
\def\R{{\rm I\kern-0.16em R}}
\def\E{{\rm I\kern-0.16em E}}
\def\P{{\rm I\kern-0.16em P}}
\def\F{{\rm I\kern-0.16em F}}
\def\B{{\rm I\kern-0.16em B}}
\def\C{{\rm I\kern-0.46em C}}
\def\G{{\rm I\kern-0.50em G}}
\numberwithin{equation}{section}
\font\eka=cmex10
\def\ind{\mathrel{\hbox{\rlap{%
\hbox to 7.5pt{\hrulefill}}\raise6.6pt\hbox{\eka\char'167}}}}
\newcommand{\vr}{L_{[0,r]}}     
\newcommand{\vT}{L_{[r,T]}}
\newcommand{\vro}{L_{[0,r]}^{\perp}}
\newcommand{\vTo}{L_{[r,T]}^{\perp}}
\newcommand{\la}{\langle}   
\newcommand{\ra}{\rangle}  
\newcommand{\ud}{\mathrm{d}}    
\newcommand{\fc}{h}  
\newcommand{\Ae}{A}			
\newcommand{\inth}{\underline{h}} 
\newcommand{\cinth}{\underline{c}} 
\def\HH{ \EuFrak H}			
\newcommand{\Cset}{\mathcal{A}}    
\newcommand{\I}{\mathbb{I}} 
\newcommand{\1}{\mathbf{1}} 
\newcommand{\Tr}{\mathbb{I}} 
\newcommand{\ff}{\EuFrak f} 
\newcommand{\hh}{\EuFrak h}
\renewcommand{\gg}{\EuFrak g}  
\newcommand{\cc}{\EuFrak c} 
\newcommand{\CC}{\EuFrak C}	
\begin{document}
\title{\textbf{A general non-existence result for linear BSDEs driven by Gaussian processes}}
\author{Christian Bender \and Lauri Viitasaari}

\renewcommand{\thefootnote}{\fnsymbol{footnote}}

\author{Christian Bender\footnotemark[1] \, and \, Lauri Viitasaari\footnotemark[1] \footnotemark[2]}

\footnotetext[1]{Department of Mathematics, Saarland University, Saarbr\"ucken\\
Postfach 151150, D-66041 Saarbr\"ucken, Germany}

\footnotetext[2]{Department of Mathematics and System Analysis, Aalto University School of Science, Helsinki P.O. Box 11100, FIN-00076 Aalto, Finland}

\maketitle

\begin{abstract}

In this paper, we study linear backward stochastic differential equations driven by a class 
of centered Gaussian non-martingales, including fractional Brownian motion with Hurst parameter $H\in (0,1)\setminus \{\frac12\}$. We show that, for every 
choice of deterministic coefficient functions, there is a square integrable terminal condition such that
the equation has no solution.

\medskip

\noindent
{\it Keywords: BSDEs, Gaussian processes, Skorokhod integration}

\small 

\noindent
{\it 2010 AMS subject classification: 60G15, 60H10, 60H07.} 
\end{abstract}

\section{Introduction}

In this paper, we study linear backward stochastic differential equations (BSDEs) driven by a centered Gaussian process $X$. More, precisely, we consider a BSDE of
 the form 
\begin{equation}\label{LBSDE_intro}
 \ud Y_t = \left(a(t)Y_t + G_t\right)\ud\gamma(t) +  Z \ud \cinth(t) +  Z \ud^{\diamond} X_t,\;0\leq t\leq T, \quad Y_T=\xi,
\end{equation}
for deterministic functions $a, \gamma, \cinth$, where $\gamma$ is a continuous function of bounded variation and $\cinth$ belongs to the Cameron-Martin space of $X$.
Moreover, $G$ is an adapted process, which satisfies suitable
integrability assumptions.  The diamond in equation (\ref{LBSDE_intro}) indicates that 
a (generalized) Skorokhod integral is applied in the integral form of the equation. On the Gaussian process $X$ we merely assume that one can define indefinite Wiener integrals 
with respect to $X$ and that 
these indefinite Wiener integrals are continuous with respect to the integrand (see Definition \ref{def:indefinite} below). This property holds true, e.g., for fractional Brownian motions with arbitrary
Hurst parameter $H\in (0,1)$, which is our leading example throughout the paper. 
As a main result we show the following general non-existence result: Whenever $X$ is not a martingale, then, for every choice of the coefficients  $a,\gamma,\cinth$, and $G$, there is a square integrable terminal condition $\xi$ such that
the above BSDE has no solution (in a mild sense, as defined precisely in Section \ref{sec:BSDE}). Put differently, well-posedness of linear BSDEs driven by centered Gaussian processes can only hold true in the martingale case.

In the fractional Brownian motion case, linear BSDEs with deterministic coefficients were previously studied 
by  \cite{bia-hu-et-al}, \cite{bender-05a}, and \cite{bender-05b}. On the one hand, combining the results from \cite{bender-05a} and \cite{bender-05b} yields an existence and uniqueness result for linear 
fractional BSDEs with deterministic coefficients for the full range of Hurst parameters $H\in (0,1)$ provided the terminal condition is a deterministic function applied to the driving fractional 
Brownian motion $X_T$ at terminal time.  On the other hand, the authors of \cite{bia-hu-et-al} consider the case of general square integrable terminal conditions for fractional Brownian motion with Hurst parameter $H>1/2$,
and represent the $Y$-part of the solution in terms of the quasi-conditional expectation operator introduced by \cite{hu-oksendal}. This quasi-conditional expectation bears many similarities 
with classical conditional expectation, but it has been realized in recent years that there are also some subtleties related to this operator. In the case of fractional Brownian motion with Hurst parameter 
$H>1/2$, it is shown by \cite{hu-peng} that the monotonicity property and the `taking-out-what-is known'-property fail to hold for the corresponding quasi-conditional expectation operator, while
the authors of \cite{bender-elliott} construct a counterexample showing that Jensen's inequality is also not in force for quasi-conditional expectation. With these subtleties in mind, a careful analysis 
of the arguments in \cite{bia-hu-et-al} reveals that their existence result requires additional assumptions. Nonetheless, \cite{bia-hu-et-al} makes for the first time the important connection between linear fractional 
BSDEs and quasi-conditional expectation, which, in generalized form, is also at the core of our non-existence result.

Let us finally mention that the interest in BSDEs driven by a fractional Brownian motion was revived by the work of \cite{hu-peng}, where, for the first time, nonlinear fractional BSDEs under a Lipschitz 
condition are considered. In \cite{hu-peng} and some recent related generalizations such as \cite{mat-nie} and  \cite{fei-xia-zhang}, the randomness in the coefficients and the terminal condition is basically 
given by a deterministic function applied to the driving process. As shown in \cite{bender} in such cases the solution can always be constructed directly on a functional level, 
by considering an auxiliary Markovian BSDE driven by a classical Brownian motion and performing a deterministic time change. 

The present paper is structured as follows: In Section 2, we introduce several generalizations of Skorokhod integrals for centered Gaussian processes. The basic idea is to start with the classical 
definition of the Skorokhod integral for a class of simple integrands, but to observe that this integral can be extended to a densely defined closed operator with respect to several different Hilbert space norms. 
Our construction is based on the $S$-transform, and covers, besides the classical Skorokhod integral, the extended divergence operator of \cite{nualartleon} and 
the Wick-It\^o integral of \cite{bender}.

 In Section 3, we first discuss our notion of a mild solution to (\ref{LBSDE_intro}) and then present our main results on linear BSDEs driven by
Gaussian processes. Compared to classical solutions to BSDEs, our notion of a mild solution adds more flexibility in two respects. Firstly, we do not ask the $Z$-part of the solution to be adapted, 
while, secondly and more importantly, in the integral form of (\ref{LBSDE_intro}), one can freely choose among the several generalizations of the Skorokhod integral from Section 2. We emphasize that 
it is well-known from \cite{cheriditonualart} that, e.g. in the case of fractional Brownian motion with Hurst parameter $H\leq 1/4$, the domain of the classical Skorokhod integral is so small, that its 
does not even contain fractional Brownian motion itself. Allowing to choose among the several Skorokhod type integrals with different domains is therefore important to 
ensure that our non-existence result cannot be blamed to be a consequence of using an `inappropriate' variant of the Skorokhod integral with too small a domain. The proof of our main result on non-existence 
relies on two observations: (i) Given a mild solution, the $Y$-part of the solution can necessarily be represented in terms of a shifted quasi-conditional expectation operator; and (ii) 
The domain of this shifted quasi-conditional expectation operator is a true subspace of the space of square-integrable random variables, whenever $X$ is not a martingale.

In order to prove these two auxiliary results, we need to introduce and study shifted quasi-conditional expectation in some detail in Section 4. We provide a new definition of shifted quasi-conditional expectation 
(in terms of the $S$-transform) which acts as a densely defined closed operator on the $L^2$-space generated by the information of a centered Gaussian process $X$, under our standing assumption that $X$ has an 
indefinite Wiener integral. In the case of a fractional Brownian motion with Hurst parameter $H>1/2$, quasi-conditional expectation was previously defined in terms of the chaos expansion
 in \cite{hu-oksendal}, see also 
\cite{bender-elliott} and \cite{hu-peng}. It turns out that even in this case our new definition leads to a larger domain than the one in these papers. E.g., all random variables with finite chaos are included
in the domain 
in our setting without requiring that the chaos coefficients are distributions of function type. A complete characterization of the domain of shifted quasi-conditional expectation in terms of the 
chaos decomposition  
is provided in Theorem \ref{thm:domain}. Together with the observation that the restriction of quasi-conditional expectation to the first chaos has operator norm stricly larger than one, if and only if $X$ is not a martingale,
this characterization result shows that the domain of shifted quasi-conditional expectation is a true subset of the space of $L^2$-random variables in the non-martingale case.

In the final Section 5, we explain how to check our standing assumption that $X$ has an indefinite Wiener integral.
To this end we reformulate this assumption equivalently 
in terms of the Cameron-Martin space of $X$. This reformulation gives rise to the fact that this standing assumption is satisfied by (linear combinations of independent) fractional 
Brownian motions, which can be seen by applying well-known results on fractional integrals.


\section{$\mathcal{H}$-Skorokhod integration}
In this section, we introduce a Skorokhod type integral with respect to a Gaussian process $X$. The main emphasis is on different ways to extend the integral from simple integrands to 
larger sets of integrands. Our setting covers the classical Skorokhod integral as well as the Wick-It\^o integral in the sense of \cite{bender}.

Throughout the paper we assume that $X=(X_t)_{t\in[0,T]}$ is a measurable centered Gaussian process with $X_0=0$ on a compact interval $[0,T]$ living in a complete probability space $(\Omega,\mathcal{F},P)$. We denote by $\left(\mathcal{F}_t^X\right)_{t\in[0,T]}$ the filtration generated by $X$ and set $L^2_X = L^2(\Omega,\mathcal{F}_T^X,P)$.
The first chaos associated to $X$ is
$$
H_X = \overline{\text{span}\{X_s : 0\leq s \leq T\}},
$$
where the closure is taken in $L^2_X$. Note that each element $\fc \in H_X$ is a centered Gaussian random variable. We recall that elements $\fc \in H_X$ can be thought of as  integrals of deterministic integrands with respect 
to $X$. To this end 
we denote by $\HH_X$ the Hilbert space which is obtained as closure of the linear span of the indicator functions $\textbf{1}_{(0,t]},\; t\in[0,T]$, equipped with the inner product
$$
\la \textbf{1}_{(0,t]}, \textbf{1}_{(0,s]} \ra_{\HH_X} = R(t,s),
$$
where $R(t,s):=\E[X_t X_s]$ is the covariance function of the process $X$. $\HH_X$ is called the \emph{space of deterministic integrands} associated to $X$. Then, the mapping
$$
\textbf{1}_{(0,t]} \mapsto X_t
$$ 
extends in a unique way to a linear isometry $I: \HH_X \rightarrow H_X$, which is called \emph{Wiener integral}. Since, by definition, the set $\{X_t:\, 0\leq t \leq T\}$ is total in the first 
chaos $H_X$, the map 
$$
\inth: [0,T] \rightarrow \mathbb{R},\quad t\mapsto \E[X_t \fc]
$$
uniquely characterizes the random variable $\fc$, i.e. if $\E[X_t \fc_1]=\E[X_t \fc_2]$ for every $t\in [0,T]$, for elements $\fc_1, \fc_2 \in H_X$, then $\fc_1=\fc_2$ almost surely. 
If we endow the set of functions of the form $\inth=\E[X_\cdot \fc]$ for $\fc\in H_X$ with the inner product $\la \inth_1,\inth_2 \ra_{CM_X}=  \E[\fc_1 \fc_2]$, we hence obtain a Hilbert space, which is also isometric 
to $H_X$, and known as the \emph{Cameron-Martin space} of $X$. We denote it by $CM_X$.
Throughout the paper we denote  generic elements of the first chaos $H_X$ by italic letters $\fc$ and the associated elements in the Cameron-Martin space $CM_X$ by underlined 
letters $\inth$ and 
in the space of deterministic integrands $\HH_X$ by fractur letters $\hh=I^{-1}(\fc)$. We sometimes remove the subscript $X$, if no confusion can arise.

Recall that random variables of form 
$$
e^{\diamond \fc} := \text{exp}\left\{\fc - \frac12 \E \fc^2\right\} ,\quad \fc\in H,
$$
are called \emph{Wick-exponentials}. Whenever $A$ is a dense subset of $H$, the set $\{e^{\diamond \fc},\; \fc\in A  \}$ forms a total subset of $L^2_X$, see e.g. Corollary 3.40 in \cite{janson}.
Consequently, every random variable $\xi$ is uniquely determined by its $S$-transform defined by
$$
\left(S\xi\right)(\hh) = \E\left[\xi e^{\diamond I(\hh)}\right],\quad \hh\in \HH.
$$ 
More precisely, if $\left(S\xi_1\right)(\hh) = \left(S\xi_2\right)(\hh)$ for every $\hh\in \Ae \subset \HH$ from a dense subset $\Ae$, then $\xi_1 = \xi_2$ almost surely. Note also that for any element $g \in H_X$ we have
\begin{equation}
\label{eq:S_transform_first_chaos}
\left(Sg\right)(\hh) = \E\left[g e^{\diamond I(\hh)}\right] = \E\left[g I(\hh)\right].
\end{equation}
In particular, for $X_t$ with $t\in[0,T]$, we obtain
\begin{equation} \label{eq:S_transform_first_chaos2}
\left(S X_\cdot\right)(\hh) = \E\left[ X_\cdot I(\hh)\right]=\inth \in CM.
\end{equation}
We finally note the well-known simple identities
\begin{equation}\label{eq:S_transform_properties}
 \E[ e^{\diamond \fc}]=1,\quad e^{\diamond \fc}e^{\diamond g}=e^{\E[gh]}e^{\diamond (\fc+g)}
\end{equation}
for $\fc, g \in H$.

With this notation at hand, we can now turn to the definition of different notions of Skorokhod type integrals with respect to $X$. First, we denote by 
$\mathcal{E}$ the space of \emph{simple integrands} defined by 
$$
\mathcal{E} =\text{span}\{\textbf{1}_{(a,b]}(t) e^{\diamond \fc},\quad 0\leq a<b\leq T,\fc \in H_X\}.
$$
The Skorokhod integral of an element $Z_t = \sum_{k=1}^n \textbf{1}_{(a_k,b_k]}(t) e^{\diamond \fc_k} \in \mathcal{E}$ is defined as
\begin{equation}
\label{eq:simple_integral}
\int_0^T Z_s \ud^\diamond X_s = \sum_{k=1}^n e^{\diamond \fc_k}\left(X_{b_k} - X_{a_k}\right) - \sum_{k=1}^n \E\left[\fc_k\left(X_{b_k}-X_{a_k}\right)\right]e^{\diamond \fc_k}.
\end{equation}
Note that the second term vanishes, when $X$ is a Gaussian martingale and the integrand $Z$ is adapted to the filtration generated by $X$. Hence, we recover the It\^o integral for simple integrands 
in this case. The second term can be interpreted as a trace term, which incorporates the memory effects of the driving Gaussian process and/or the nonadaptedness of the integrand. It forces 
the integral to have zero expectation.
In order to extend this integral to a larger class of integrands we first compute its $S$-transform.
We obtain thanks to (\ref{eq:S_transform_first_chaos}) and (\ref{eq:S_transform_properties}) by elementary manipulations
\begin{eqnarray}
\label{eq:simple_integral_S_transform}
\left(S \int_0^T Z_s \ud^\diamond X_s\right)(\hh) &=& 
\sum_{k=1}^n \left(Se^{\diamond \fc_k}\right)(\hh)\left(S\left(X_{b_k}-X_{a_k}\right)\right)(\hh)\nonumber \\
&=& \sum_{k=1}^n \left(Se^{\diamond \fc_k}\right)(\hh)(\inth(b_k)-\inth(a_k)),
\end{eqnarray}
which we can think of as an integral of the deterministic step function $(SZ_t)(\hh)$ with respect to $\inth\in CM$.

Given a function $\cinth \in CM$, we say that a Hilbert space $\left(\mathcal{H},\Vert\cdot\Vert_{\mathcal{H}}\right)$ is a \emph{space of integrands} for the integrator $\cinth$, whenever
\begin{enumerate}
 \item The set $\{\1_{(0,t]}:\quad t\in[0,T]\}$ is total in $\mathcal{H}$;
\item the map $\1_{(0,t]}\mapsto \cinth(t)$ can be extended to a continuous linear functional from $\mathcal{H}$ to $\mathbb{R}$.
\end{enumerate}
In this case we denote the extension as $\int_0^T \,\cdot \;d^\mathcal{H}\cinth$.

\begin{defn}
 We say a Hilbert space $\left(\mathcal{H},\Vert\cdot\Vert_{\mathcal{H}}\right)$ is \emph{appropriate for the extension of the Skorokhod integral with respect to $X$} 
(\emph{appropriate for $X$}, for short), if it is a space of integrands 
for integrators from a dense subset of the Cameron-Martin space associated to $X$.  
\end{defn}

Consequently, if $\mathcal{H}$ is appropriate for $X$, then the subspace
$$
\Cset_{\mathcal{H}} := \left\{\hh \in \HH : \mathcal{H}\textnormal{ is a space of integrands for }\inth\right\}
$$
is dense in $\HH$, and hence every random variable in $L^2_X$ is uniquely determined by the restriction of the $S$-transform to $\Cset_{\mathcal{H}}$.

Given a Hilbert space $\mathcal{H}$, which is appropriate for $X$, we next extend the integral for simple intgrands in (\ref{eq:simple_integral}) to a closed operator from a dense
subset of $L^2(\Omega,\mathcal{H})= L^2_X\otimes \mathcal{H}$ to $L^2_X$ (with the tensor product understood in the sense of Hilbert spaces).
\begin{defn}
Suppose $\mathcal{H}$ is appropriate for $X$. We say that  $Z\in L^2(\Omega,\mathcal{H})$ 
belongs to the domain of the $\mathcal{H}$-Skorokhod integral with respect to $X$, if there is a random variable
$\int_0^T Z \ud^{\diamond_\mathcal{H}}X \in L^2_X$ such that, for every $\hh\in \Cset_{\mathcal{H}}$,
\begin{equation}
\label{eq:abstract_integral_def}
\left(S \int_0^T Z \ud^{\diamond_\mathcal{H}}X\right)(\hh) = \int_0^T \left(SZ\right)(\hh)\ud^\mathcal{H}\inth. 
\end{equation}
In this case $\int_0^T Z \ud^{\diamond_\mathcal{H}}X$ is uniquely determined and called the  \emph{$\mathcal{H}$-Skorokhod integral of $Z$ with respect to $X$}.
\end{defn}
To be precise, $\left(SZ\right)(\hh)$ here denotes (for fixed $\hh\in \Cset_{\mathcal{H}}$) the element in $\mathcal{H}$, which is obtained by applying the $L^2_X$-inner product of
the Wick exponential $e^{\diamond I(\hh)}$ to the $L^2_X$-coordinate of $Z$, i.e.
$$
\left(SZ\right)(\hh)=(\E^{Q_\hh} \otimes \textnormal{id}_{\mathcal{H}})(Z),\quad  \ud Q_\hh = e^{\diamond I(\hh)} \ud P.
$$
Note that, for fixed $\hh\in \Cset_{\mathcal{H}}$, this extended $S$-transform $Z\mapsto \left(SZ\right)(\hh)$ is a continuous operator from $L^2(\Omega,\mathcal{H})$ to $\mathcal{H}$.
\begin{pro}
 Suppose $\mathcal{H}$ is appropriate for $X$. Then, the $\mathcal{H}$-Skorokhod integral with respect to $X$ is a densely defined closed operator from $L^2(\Omega,\mathcal{H})$ to $L^2_X$. 
Moreover, the space 
of simple integrands $\mathcal{E}$ is included in its domain, and the restriction of the $\mathcal{H}$-Skorokhod integral to $\mathcal{E}$ is given by (\ref{eq:simple_integral}).
\end{pro}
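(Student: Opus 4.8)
The plan is to verify the three assertions---that the operator is densely defined, that it is closed, and that it restricts to (\ref{eq:simple_integral}) on $\mathcal{E}$---essentially by chasing $S$-transforms and invoking the two continuity facts recorded just before the proposition: that $Z\mapsto (SZ)(\hh)$ is continuous from $L^2(\Omega,\mathcal{H})$ to $\mathcal{H}$ for fixed $\hh\in\Cset_{\mathcal{H}}$, and that $\int_0^T\cdot\,\ud^{\mathcal{H}}\inth$ is a continuous linear functional on $\mathcal{H}$. Linearity of the $\mathcal{H}$-Skorokhod integral is immediate from linearity of the $S$-transform and of this functional.

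First I would treat the simple integrands. For $Z=\sum_{k=1}^n \1_{(a_k,b_k]}(t)e^{\diamond\fc_k}\in\mathcal{E}$, viewed as the element $\sum_k e^{\diamond\fc_k}\otimes\1_{(a_k,b_k]}$ of $L^2_X\otimes\mathcal{H}$ (with $\1_{(a_k,b_k]}=\1_{(0,b_k]}-\1_{(0,a_k]}\in\mathcal{H}$), one computes $(SZ)(\hh)=\sum_{k=1}^n (Se^{\diamond\fc_k})(\hh)\,\1_{(a_k,b_k]}$ in $\mathcal{H}$, and applying $\int_0^T\cdot\,\ud^{\mathcal{H}}\inth$ (which by construction sends $\1_{(0,t]}$ to $\inth(t)$) yields $\sum_{k=1}^n (Se^{\diamond\fc_k})(\hh)\bigl(\inth(b_k)-\inth(a_k)\bigr)$. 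By (\ref{eq:simple_integral_S_transform}) this is exactly the $S$-transform of the right-hand side of (\ref{eq:simple_integral}), for every $\hh\in\Cset_{\mathcal{H}}$. Since $\Cset_{\mathcal{H}}$ is dense in $\HH$ and random variables are determined by their $S$-transform on a dense subset, (\ref{eq:simple_integral}) is the unique element of $L^2_X$ satisfying (\ref{eq:abstract_integral_def}); hence $\mathcal{E}$ lies in the domain and the restriction of the $\mathcal{H}$-Skorokhod integral to $\mathcal{E}$ is given by (\ref{eq:simple_integral}). (The same density argument shows $\int_0^T Z\,\ud^{\diamond_\mathcal{H}}X$ is well defined for arbitrary $Z$ in the domain.) For "densely defined" it then suffices that $\mathcal{E}$ is dense in $L^2(\Omega,\mathcal{H})=L^2_X\otimes\mathcal{H}$: the Wick exponentials $\{e^{\diamond\fc}:\fc\in H_X\}$ are total in $L^2_X$, the indicators $\{\1_{(0,t]}:t\in[0,T]\}$ are total in $\mathcal{H}$ by the first defining property of a space of integrands, and finite linear combinations of elementary tensors $f\otimes g$ with $f,g$ ranging over total sets are dense in a Hilbert-space tensor product.

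For closedness I would take $Z_n$ in the domain with $Z_n\to Z$ in $L^2(\Omega,\mathcal{H})$ and $\int_0^T Z_n\,\ud^{\diamond_\mathcal{H}}X\to\eta$ in $L^2_X$, and show $Z$ lies in the domain with integral $\eta$. Fix $\hh\in\Cset_{\mathcal{H}}$. Since $e^{\diamond I(\hh)}\in L^2_X$, testing against it gives $\bigl(S\!\int_0^T Z_n\,\ud^{\diamond_\mathcal{H}}X\bigr)(\hh)\to(S\eta)(\hh)$. On the other hand, by (\ref{eq:abstract_integral_def}) this quantity equals $\int_0^T (SZ_n)(\hh)\,\ud^{\mathcal{H}}\inth$; by continuity of $Z\mapsto(SZ)(\hh)$ we have $(SZ_n)(\hh)\to(SZ)(\hh)$ in $\mathcal{H}$, and then by continuity of $\int_0^T\cdot\,\ud^{\mathcal{H}}\inth$ the integrals converge to $\int_0^T (SZ)(\hh)\,\ud^{\mathcal{H}}\inth$. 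Hence $(S\eta)(\hh)=\int_0^T (SZ)(\hh)\,\ud^{\mathcal{H}}\inth$ for every $\hh\in\Cset_{\mathcal{H}}$, which is precisely the statement that $Z$ belongs to the domain of the $\mathcal{H}$-Skorokhod integral and $\int_0^T Z\,\ud^{\diamond_\mathcal{H}}X=\eta$.

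I do not expect a serious obstacle here: the argument is bookkeeping around the $S$-transform. The only mildly delicate point is the density of $\mathcal{E}$ in the Hilbert tensor product $L^2_X\otimes\mathcal{H}$, which is standard, and the closedness step---the "heart" of the statement---reduces immediately to the two continuity properties already available before the proposition.
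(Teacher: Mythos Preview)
Your proposal is correct and follows essentially the same route as the paper's proof: invoke (\ref{eq:simple_integral_S_transform}) to handle $\mathcal{E}$, use totality of Wick exponentials in $L^2_X$ and of indicators in $\mathcal{H}$ to get density of $\mathcal{E}$ in $L^2(\Omega,\mathcal{H})$, and derive closedness by passing to the limit in the $S$-transform identity via the continuity of $Z\mapsto(SZ)(\hh)$ and of $\int_0^T\cdot\,\ud^{\mathcal{H}}\inth$. The paper's argument is the same, only slightly terser.
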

\begin{proof}
 Equation (\ref{eq:simple_integral_S_transform}) implies that $\mathcal{E}$ is included in the domain of the $\mathcal{H}$-Skorokhod integral and that the $\mathcal{H}$-Skorokhod integral
for simple integrands is given by (\ref{eq:simple_integral}). As the Wick exponentials are total in $L^2_X$ and the indicator functions $\1_{(0,t]}$, $t\in [0,T]$ are total in $\mathcal{H}$,
we observe that $\mathcal{E}$ is dense in $L^2(\Omega,\mathcal{H})$. It, hence, remains to show the closedness. To this end suppose that $(Z_n)$ converges to $Z$ in $L^2(\Omega,\mathcal{H})$,
$Z_n$ is in the domain of the $\mathcal{H}$-Skorokhod integral and $\int_0^T Z_n \ud^{\diamond_\mathcal{H}}X$ converges to some random variable $\xi$ in $L^2_X$. Then, for every 
$\hh \in \Cset_{\mathcal{H}}$, $(SZ_n)(\hh)$ converges to $(SZ)(\hh)$ in $\mathcal{H}$, which in turn implies
\begin{eqnarray}
\label{eq:S_transform_general_integral_convergence}
 (S\xi)(\hh)&=&\lim_{n\rightarrow \infty} \int_0^T \left(SZ_n\right)(\hh)\ud^\mathcal{H}\inth=\int_0^T \left(SZ\right)(\hh)\ud^\mathcal{H}\inth,
\end{eqnarray}
because $\mathcal{H}$ is a space of integrands for $\inth$, whenever $\hh\in \Cset_{\mathcal{H}}$. Thus, $\xi$ is the $\mathcal{H}$-Skorokhod integral of $Z$ with respect to $X$.
\end{proof}

Before we exemplify several different constructions of appropriate Hilbert spaces $\mathcal{H}$, let us recall that \emph{fractional Brownian motion} 
with Hurst parameter $H\in (0,1)$ is a centered Gaussian process $X$ with covariance function
$$
R(t,s)=\mathbb{E}[X_tX_s]=\frac{1}{2} \left( t^{2H}+s^{2H}-|t-s|^{2H}\right).
$$

\begin{exa}\label{ex:integral}
(i) For each Gaussian process $X$, its space of deterministic integrands $\HH$ is appropriate. Indeed, for every $\inth \in CM_X$, we observe that
$$
\int_0^T \gg \,\ud^\HH \inth:= S(I(\gg))(\hh)
$$
is a continuous mapping from $\HH$ to $\mathbb{R}$ with $\int_0^T \1_{(0,t]} \ud^\HH \inth=\inth(t)$ by (\ref{eq:S_transform_first_chaos2}). Hence, $\Cset_{\HH}=\HH$. One easily observes 
that the defining equation (\ref{eq:abstract_integral_def}) for the $\HH$-Skorokhod integral reduces to the duality relation between Skorokhod integral and Malliavin derivative restricted 
to the Wick exponentials, which are dense within the set of square integrable random variables with square integrable Malliavin derivative, cp. \cite{nu}. Hence, the 
$\HH$-Skorokhod integral coincides with the classical Skorokhod integral.
\\[0.2cm]
(ii) Let the Hilbert space $\mathcal{H}$ be given by $\mathcal{H}=L^2(\ud \mu)=L^2([0,T],\ud \mu)$ for some finite measure $\mu$ on $[0,T]$. Then, 
$L^2(\ud \mu)$ is a space of integrands for  $\cinth \in CM$, if and only if 
$$
\cinth(t)= \int_0^t \dot{\cinth}(s) \mu(\ud s) \textnormal{ for some }\dot{\cinth}\in L^2(\ud\mu) 
$$
In this case we have for 
${\bf g}\in L^2(\ud\mu)$,
$$
\int_0^T \,{\bf g}\, \ud^{L^2(\ud\mu)} \cinth = \int_0^T {\bf g}(s) \dot{\cinth}(s) \mu(\ud s).
$$
Consequently
$$
 \Cset_{L^2(\ud\mu)}=\left\{\hh \in \HH : \inth(t)= \int_0^t \dot{\inth}(s) \mu(ds) \textnormal{ for some }\dot{\inth}\in L^2([0,T],\ud\mu)    \right\}.
$$
In \cite{bender} sufficient conditions for $X$ are provided to ensure that $L^2(\ud\mu)$ is appropriate for $X$ with the choice $\mu((0,t])=Var(X_t)$ 
and in particular it is shown that, with this choice of $\mu$, $L^2(\ud\mu)$ is appropriate for $X$, whenever $X$ is a finite linear combination of independent fractional Brownian motions with different Hurst 
parameters.
\\[0.2cm]
(iii) Suppose $\mathcal{H}$ is a Hilbert space, $\HH_X$ is densely and continuously embedded in $\mathcal{H}$, and $T:\HH_X \subset \mathcal{H} \rightarrow \mathcal{H}$ is a linear map 
such that $\|T\hh\|_\mathcal{H}=\|h\|_\HH$. Denote by $T^*$ the adjoint operator of $T$. If 
the set 
$$A_T=\{\hh\in \HH_X:\; T\hh \textnormal{ belongs to the domain of } T^*\}  $$
is dense in $\HH_X$, then  $\mathcal{H}$ is appropriate for $X$. Indeed, for $\hh\in A_T$,
\begin{eqnarray*}
\left|\sum \alpha_i \inth(t_i)\right|&=& \left| \la \sum \alpha_i\1_{(0,t_i]},\hh \ra_\HH\right|=\left|\la \sum \alpha_i\1_{(0,t_i]}, T^*T \hh \ra_\mathcal{H}\right|\\ &\leq& \|T^*T \hh  \|_\mathcal{H} \|\sum \alpha_i\1_{(0,t_i]} \|_\mathcal{H},
\end{eqnarray*}
which implies that $A_T\subset \Cset_{\mathcal{H}}$. In this situation the $\mathcal{H}$-Skorokhod integral coincides with the extended divergence operator of \cite{nualartleon}.
By \cite{nualartleon}, for fractional Brownian motion with Hurst parameter $H<1/2$ the space $\mathcal{H}=L^2\left([0,T],\ud t\right)$ is covered in the setting of this example.
\end{exa}

In order to study BSDEs, we must be able to define $\mathcal{H}$-Skorokhod integrals over time intervals $[t,T]$ rather than on the whole interval $[0,T]$. To this end we introduce the following additional
condition on an appropriate space $\mathcal{H}$ for $X$:

\begin{description}
\item[\textbf{(I$_{\mathcal{H}}$)}] For every $r\in[0,T]$ there is a continuous linear operator $\I^{\mathcal{H}}_r : \mathcal{H}\mapsto \mathcal{H}$ such 
that $\I^{\mathcal{H}}_r \left(\1_{(0,t]}\right) = \1_{(0,t\wedge r]}$. 
\end{description} 
The assumption \textbf{(I$_{\mathcal{H}}$)} is, e.g., clearly satisfied for the space $\mathcal{H}=L^2([0,T],\ud\mu)$ in Example \ref{ex:integral}, (ii), where the operator $\I^{\mathcal{H}}_r$ is just the 
multiplication operator with 
the indicator function $\1_{(0,r]}$. 
\begin{defn}
Let $0\leq a< b\leq T$ and suppose $\mathcal{H}$ is an appropriate space of integrands satisfying  (I$_{\mathcal{H}}$). The integral $\int_a^b Z \ud^{\diamond_{\mathcal{H}}}X$ is defined as
$$
\int_a^b Z\ud^{\diamond_{\mathcal{H}}}X = \int_0^T \I^{\mathcal{H}}_{(a,b]}(Z)\ud^{\diamond_{\mathcal{H}}}X,
$$
provided the right-side exists, where $\I^{\mathcal{H}}_{(a,b]}=\textnormal{id}_{L^2_X}\otimes(\I^{\mathcal{H}}_b-\I^{\mathcal{H}}_a )$.
\end{defn}

If condition \textbf{(I$_{\mathcal{H}}$)} is satisfied for the space of deterministic integrands $\mathcal{H}=\HH_X$ of $X$, we can,
for every $\hh\in\HH_X$ define an indefinite Wiener integral by
$$
\int_0^t \hh \,\ud X := I( \I^{\HH_X}_t(\hh)),\quad  0\leq t \leq T,
$$
and for fixed $t$, the indefinite Wiener integral is a continuous mapping from $\HH_X$ to the first chaos $H_X$. 
For most of the paper we shall assume that such construction of indefinite Wiener integrals is possible:
\begin{defn}\label{def:indefinite}
We say that $X$ has an \emph{indefinite Wiener integral} if the space of deterministic integrands $\HH_X$ satisfies (I$_{\HH_X}$).
\end{defn}
In Section \ref{sec:indefinite} we discuss how to check this property, and, in particular, show that linear combinations 
of independent fractional Brownian motions have an indefinite Wiener integral.

\section{Linear BSDEs and discussion of main results}\label{sec:BSDE}

We now consider linear BSDEs of the form
\begin{equation}\label{BSDE_differentialform}
 \ud Y_t = \left(a(t)Y_t + G_t\right)\ud\gamma(t) +  Z \ud \cinth(t) +  Z \ud^{\diamond} X_t,\quad Y_T=\xi,
\end{equation}
and assume throughout this section that $\cinth$ belongs to the Cameron-Martin space of $X$, $\gamma$ is a continuous function of bounded variation, $a$ is a measurable function and $G$ is an adapted and measurable process satisfying
$$
\int_0^T  |a(s)| \ud |\gamma|(s)+ \E\left[\left( \int_0^T |G_s| \ud |\gamma|(s)\right)^2\right]  <\infty,
$$
where $|\gamma|$ denotes the total variation of $\gamma$.
Moreover, the terminal condition $\xi$ is supposed to belong to $L^2_X$.
We show that a  solution to such equation must necessarily be given in terms of the quasi-conditional expectation operator, which we introduce below. However, first we define a concept of \emph{mild solution} to the above BSDE.
\begin{defn}\label{def:mild}
We say that a triplet  $(Y,Z,\mathcal{H})$ is a \emph{mild solution} to the BSDE (\ref{BSDE_differentialform}), if
\begin{enumerate}
 \item 
$\mathcal{H}$ is an appropriate Hilbert space for $X$ satisfying (I$_{\mathcal{H}}$) and it is a space of integrands for $\cinth$; 
\item $Y$ is an adapted and measurable process satisfying $Y_t\in L^2_X$ for every $t\in [0,T]$ and 
$
\E\left[\left( \int_0^T |a(s) Y_s| \ud |\gamma|(s)\right)^2\right] < \infty;
$
\item $Z \in L^2\left(\Omega,\mathcal{H}\right)$
such that the $\mathcal{H}$-Skorokhod integrals $\int_t^T Z \ud^{\diamond_{\mathcal{H}}}X$ exists for every $t\in[0,T]$;
\item $(Y,Z)$ satisfies the integral form of (\ref{BSDE_differentialform}) with respect to the appropriate space $\mathcal{H}$, i.e.
\begin{equation}
\label{eq:BSDE_abstract}
Y_t = \xi - \int_t^T \left(a(s)Y_s + G_s\right)\ud\gamma(s) - \int_t^T Z \ud^{\mathcal{H}} \cinth - \int_t^T Z \ud^{\diamond_{\mathcal{H}}}X,
\end{equation}
holds for every $t\in[0,T]$ in $L^2_X$.
\end{enumerate}
Here, $\int_t^T Z \ud^{\mathcal{H}} \cinth =\int_0^T  \I^{\mathcal{H}}_{(t,T]}(Z) \ud^{\mathcal{H}} \cinth$ and the integral with respect to $\cinth$ is applied to the $\mathcal{H}$-coordinate 
of an element in $L^2(\Omega,\mathcal{H})= L^2_X\otimes \mathcal{H}$.
\end{defn}

\begin{rem}
 Compared to the usual setting of BSDEs driven by semimartingales, the above notion of a solution is mild in at least two respects. First of all one is free to choose among several extensions 
of the Skorokhod-integral by choosing the appropriate space $\mathcal{H}$. Additionally, we don't ask the $Z$-part of the solution to be adapted. Indeed, in general $Z$  (depending on the choice 
of $\mathcal{H}$) need not even be a stochastic process in time.
\end{rem}

The next example illustrates why the flexibility in choosing the appropriate space $\mathcal{H}$ is important.
\begin{exa} \label{exa:BSDE}
 Suppose $X$ is a fractional Brownian motion with Hurst parameter $H< 1/4$. We consider the three choices of appropriate spaces (in view of Example \ref{ex:integral}, (i)--(iii))
$$
\HH_X \subset L^2\left(\ud V\right)\subset L^2\left(\ud t\right),
$$
where $V(t)=t^{2H}$ is the variance function of $X$. The inclusions hold in the sense of continuous embeddings, with the first
one following from a weighted Hardy-Littlewood 
inequality for fractional 
Riemann-Liouville integrals
and the characterization of the space of deterministic integrands of a fractional Brownian motion with Hurst parameter $H<1/2$ in Pipiras and Taqqu \cite{piptaq}. More precisely,
we have $\ff \in \HH_X$ if and only if $\ff = K^*g$ for some $g\in L^2\left(\ud t\right)$, where the operator $K^*$ is given by 
$\left(K^*g\right)(t) = c_H t^{\frac12 -H}\left(I_{T-}^{\frac12 - H} \cdot^{H-\frac12}g(\cdot)\right)(t)$ and 
$I^{1/2-H}_{T-}$ denotes the Riemann-Liouville fractional integral. In this case, $\| \ff\|_{\HH_X}=\|g\|_{L^2(dt)}$.
Thus, the continuous embedding of 
$\HH_X$ into $L^2\left(\ud V\right)$ follows from equation $(5.45')$ in Samko et al. \cite{s-k-m}.
Hence, the three $\mathcal{H}$-Skorokhod
integrals are extensions of each other, with the space $L^2\left(\ud t\right)$ leading to the largest set of integrands among these three choices, and the space of deterministic integrands to the smallest one.

We consider the linear BSDE
$$
dY_t=2Ht^{2H-1}\ud t +    Z \ud \cinth(t)+ Z \ud^{\diamond} X_t,\quad Y_T=(X_T+V(T))^2
$$
for $\cinth(t)=V(t)=t^{2H}$. This function $\cinth$ indeed belongs to the Cameron-Martin space of a fractional Brownian motion with Hurst parameter $H<1/2$ by (2.44) in Samko et al. 
\cite{s-k-m} and the 
characterization 
of the Cameron-Martin space of a fractional Brownian motion in Decreusefond and \"Ust\"unel \cite[Remark 3.1]{dec-ust}. 

With the choice $\mathcal{H}=L^2\left(\ud V\right)$ we can apply the It\^o formula in the form of Theorem 3.3 in \cite{bender} to $(X_t+V(t))^2$ in order to derive
\begin{eqnarray*}
 (X_t-V(t))^2&=&(X_T-V(T))^2- \int_t^T 1 \ud V(s)- \int_t^T 2(X_s+V(s)) \ud V(s) \\ && - \int_t^T 2(X_s+V(s)) \ud^{\diamond_{L^2(dV)}}X.
\end{eqnarray*}
By Example \ref{ex:integral}, (ii), $L^2\left(\ud V\right)$ is a space of integrands for $\cinth$ and 
$$
\int_t^T 2(X_s+V(s)) \ud V(s) =\int_t^T 2(X_s+V(s)) \ud^{L^2(\ud V)} \cinth. 
$$ 
Hence, with $Y_t=(X_t+V(t))^2$ and $Z_t=2(X_t+V(t))$, the triplet $(Y,Z, L^2\left(\ud V\right))$ is a mild solution to the above linear BSDE. 

We claim that neither $(Y,Z, L^2\left(\ud t\right))$ nor  $(Y,Z, \HH_X)$ is a mild solution.  The space $\mathcal{H}=L^2\left(\ud t\right)$ is too large. Writing 
$$
\cinth(t)=\int_0^t \dot \cinth(s) \ud s\quad \textnormal{ for } \quad \dot \cinth(s)=2Hs^{2H-1},
$$
we observe that $ \dot \cinth\notin L^2\left(\ud t\right)$ for $H< 1/4$, and thus $L^2(dt)$ is not a space of integrands for $\cinth$ by Example \ref{ex:integral}, (ii). 
Similarly, the space $\HH_X$ is too small. Indeed, we have $t^{2H} \in \HH_X$ (which proof is postponed to the Appendix \ref{sec:A}), and 
consequently $Z_t = 2(X_t + V(t)) \notin L^2\left(\Omega,\HH_X\right)$ since it is proved in \cite{cheriditonualart} that $X_t \notin L^2\left(\Omega,\HH_X\right)$ for $H\leq \frac14$.
\end{exa}

We next present our main result on non-existence for linear Gaussian BSDEs driven by non-martingales.

\begin{thm}\label{thm:nonexistence}
Suppose $X$ has an indefinite Wiener integral. If $X$ is not a martingale, then, for any choice of the coefficients $a$, $\gamma$, $\cinth$, and $G$ (satisfying the standing assumptions of this section), there exists a terminal value $\xi \in L^2_X$ such that 
linear BSDE (\ref{BSDE_differentialform}) does not have a mild solution.
\end{thm}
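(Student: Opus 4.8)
The plan is to combine the two observations announced in the introduction: that a mild solution necessarily represents the $Y$-part through a shifted quasi-conditional expectation operator $\Lambda_r$ (at an interior time $r\in(0,T)$), and that $\mathrm{dom}(\Lambda_r)\subsetneq L^2_X$ as soon as $X$ is not a martingale. The bad terminal value is then simply any $\xi\in L^2_X\setminus\mathrm{dom}(\Lambda_r)$.

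\emph{Step 1: every mild solution forces $Y_r=\Lambda_r\xi$.} Let $(Y,Z,\mathcal H)$ be a mild solution and apply the $S$-transform to \eqref{eq:BSDE_abstract} at $\hh$ in the dense set $\Cset_{\mathcal H}\subset\HH_X$ (recall that $X$ has an indefinite Wiener integral, so $\I^{\HH_X}_r$ is available and $\Cset_{\HH_X}=\HH_X$). Since the $S$-transform at a fixed $\hh$ equals the bounded operator $\E^{Q_\hh}\otimes\mathrm{id}_{\mathcal H}$, it commutes with continuous linear functionals acting on the $\mathcal H$-coordinate; using this together with the defining identity \eqref{eq:abstract_integral_def} and \eqref{eq:S_transform_first_chaos2} one obtains, for each such $\hh$, the deterministic linear equation
\begin{equation*}
u(t)=(S\xi)(\hh)-\int_t^T\bigl(a(s)u(s)+(SG_s)(\hh)\bigr)\,\ud\gamma(s)-\int_t^T (SZ)(\hh)\,\ud^{\mathcal H}\bigl(\cinth+\inth\bigr),
\end{equation*}
for the function $t\mapsto u(t):=(SY_t)(\hh)$, where $\inth=(SX_\cdot)(\hh)$. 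The $Z$-contribution is a priori undetermined, but the $\mathcal F^X_t$-measurability of $Y_t$ forces $(SY_t)(\hh)$ to depend on $\hh$ only through the orthogonal projection of $I(\hh)$ onto $\overline{\mathrm{span}}\{X_s:\,s\le t\}$ (a routine Gaussian computation). Combining this rigidity with the explicit form of the $S$-transform of the $\mathcal H$-Skorokhod integral and property~(I$_{\HH_X}$) should pin down the $Z$-term and identify $(SY_r)(\hh)$ with the $S$-transform of $\Lambda_r\xi$, where $\Lambda_r$ is a shifted quasi-conditional expectation at time $r$: here $a,\gamma$ enter through a deterministic exponential resolvent factor, $\cinth$ through a fixed translation of the $S$-transform argument by a Cameron--Martin vector, and $G$ through an additive correction term in $L^2_X$. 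As $\Lambda_r$ is described solely in terms of $S$-transforms and the data, it does not depend on the auxiliary space $\mathcal H$; in particular any mild solution yields $\xi\in\mathrm{dom}(\Lambda_r)$.

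\emph{Step 2: $\mathrm{dom}(\Lambda_r)\subsetneq L^2_X$ in the non-martingale case.} Stripping off the deterministic factor, the fixed Cameron--Martin shift (both bounded and boundedly invertible on $L^2_X$) and the additive $G$-correction, $\mathrm{dom}(\Lambda_r)$ is, up to a bijection of $L^2_X$, the domain of the plain quasi-conditional expectation $\widetilde{\E}_r$. By the chaos characterization of Theorem~\ref{thm:domain}, $\widetilde{\E}_r$ acts on the $n$-th Wiener chaos as the $n$-th symmetric tensor power of its restriction to the first chaos $H_X$, which under the isometry $I:\HH_X\to H_X$ is the adjoint of $\I^{\HH_X}_r$; its operator norm on the $n$-th chaos is therefore $\Vert\I^{\HH_X}_r\Vert^n$. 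Since $\I^{\HH_X}_r$ is a bounded idempotent, $\Vert\I^{\HH_X}_r\Vert\ge 1$, with equality if and only if it is an orthogonal projection; a direct computation with the covariance $R$ shows that this happens for every $r$ exactly when $X$ is a martingale. Thus, $X$ failing to be a martingale, there is an interior $r$ with $\rho:=\Vert\I^{\HH_X}_r\Vert>1$; choosing for each $n$ a chaos element $\xi_n$ of norm $n^{-1}$ on which $\widetilde{\E}_r$ is almost norm-maximal and setting $\eta:=\sum_n\xi_n$ gives $\eta\in L^2_X$ with $\sum_n\rho^{2n}\Vert\xi_n\Vert^2=\infty$, hence $\eta\notin\mathrm{dom}(\widetilde{\E}_r)$. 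Transporting $\eta$ through the bijection yields $\xi\in L^2_X\setminus\mathrm{dom}(\Lambda_r)$; by Step~1 the BSDE \eqref{BSDE_differentialform} with this terminal value has no mild solution.

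\emph{Main obstacle.} The decisive and most technical step is Step~1: upgrading the a priori under-determined $S$-transformed equation to a rigid, $\mathcal H$-independent representation of $Y$. This is precisely what the detailed development of shifted quasi-conditional expectation in Section~4 is designed to support, and it requires controlling the interplay between $\I^{\HH_X}_\cdot$, the adaptedness of $Y$, and the $S$-transform of the divergence integral uniformly over all admissible $\mathcal H$ -- including pathologically small ones such as the domain of the classical Skorokhod integral for fractional Brownian motion with $H\le\tfrac14$ (cf.\ Example~\ref{exa:BSDE}).
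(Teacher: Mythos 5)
Your overall architecture is exactly the paper's: (a) any mild solution forces a representation of $Y$ through a shifted quasi-conditional expectation, hence forces $\tilde\xi=\xi-\int_0^TA(s)G_s\,\ud\gamma(s)$ into the domain $D_r^{\cc}$ (Theorem \ref{thm:solution_Y}); (b) $D_r^{\cc}\subsetneqq L^2_X$ for some $r$ whenever $X$ is not a martingale (Theorem \ref{thm:mg_domain_connection}); (c) choose $\xi$ so that $\tilde\xi$ lies outside this domain. However, both halves of your argument have genuine gaps. In Step~1 the decisive mechanism is missing: you evaluate the $S$-transformed equation at a generic $\hh$ and hope that adaptedness of $Y$ ``pins down'' the undetermined $Z$-contribution, but adaptedness alone does not eliminate it --- the equation holds for each $\hh$ separately and the $Z$-term is generically nonzero there. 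What actually kills the $Z$-part is evaluating the $S$-transform at the special arguments $(\hh+\cc)^v-\cc$ with $v\le t$, for which $\bigl(S(\int_t^TZ\,\ud^{\diamond_{\mathcal H}}X+\int_t^TZ\,\ud^{\mathcal H}\cinth)\bigr)\bigl((\hh+\cc)^v-\cc\bigr)=0$ (Proposition \ref{lma:removing_Z_part}); one then solves the resulting deterministic ODE with the integrating factor $A$, sets $t=v$, and only \emph{afterwards} uses adaptedness (Theorem \ref{lma:measurable}) to replace $(SY_t)((\hh+\cc)^t-\cc)$ by $(SY_t)(\hh)$. Your ``should pin down the $Z$-term'' conceals precisely this step.

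In Step~2 the reduction of $D_r^{\cc}$ to $D_r$ ``up to a bijection of $L^2_X$'' is false as stated. Translating the $S$-transform argument by the Cameron--Martin vector $\cc_r=\I_r^*\cc-\cc$ corresponds to multiplication by the Wick exponential $e^{\diamond I(\cc_r)}$ (up to a deterministic factor), and this multiplication is \emph{not} a bounded operator on $L^2_X$, so it does not carry $L^2_X\setminus D_r$ onto $L^2_X\setminus D_r^{\cc}$. Moreover, by Theorem \ref{thm:domain} the shifted operator does not act chaos-by-chaos: the coefficient $\widetilde{\ff}_n^{r,\cc}$ aggregates contributions from all chaoses $k\ge n$, so for a generic element $\eta=\sum_n\xi_n$ built from nearly norm-maximal chaos components one cannot exclude cancellations in these sums. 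The paper circumvents this by constructing the bad element directly in the shifted setting (Example \ref{exa:not_in_domain}): take $\ff$ with $\Vert\I_r\ff\Vert_{\HH}>1>\Vert\ff\Vert_{\HH}$ (possible by Theorem \ref{thm:norm_increase}), normalize the sign so that $\la\ff,\cc_r\ra_{\HH}\ge0$, and set $\xi=\sum_kI_k(\ff^{\otimes k}/\sqrt{k!})$; the sign condition guarantees all terms in $\widetilde{\ff}_n^{r,\cc}$ add constructively, giving $n!\Vert\widetilde{\ff}_n^{r,\cc}\Vert^2\ge\Vert\I_r\ff\Vert^{2n}\to\infty$. You need some such device tailored to the specific $\cc$ coming from the BSDE coefficient $\cinth$; the unshifted construction does not transport. (A minor additional slip: on the first chaos the unshifted operator acts as $\I_r$ itself under the isometry $I$, not as its adjoint, though this does not affect the operator norm.)
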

\begin{rem}\label{rem:BSDE_martingale}
We note the following partial converse of Theorem \ref{thm:nonexistence}. Suppose that $X$ is a martingale, and, additionally, that its variance function $V(t)=\E[X_t^2]$ is continuous and strictly 
increasing with inverse function $U(t)$. Then, $W_t=X_{U(t)}$ is a Brownian motion, and, hence, $X=W_V$ has the martingale representation property 
with respect to its natural filtration. Applying, Theorem 6.1 in \cite{karoui-huang} one observes:
For every $\cinth\in CM_X$, $a\in L^1(\ud V)$, $\xi\in L^2_X$, and adapted process $G$ satisfying $\E[\int_0^T G_s^2 \ud V(s)]<\infty$, there
exists a pair of adapted processes $(Y,Z)$ such that $(Y,Z,\HH_X)$ is a mild solution to
$$
\ud Y_t = \left(a(t)Y_t + G_t\right)\ud V(t) +  Z \ud \cinth(t) +  Z \ud^{\diamond} X_t,\quad Y_T=\xi.
$$
Here, we use that $\HH_X=L^2(\ud V)$ and $CM_X=\{ \int_0^\cdot \dot \cinth (s) \ud V(s);\; \dot \cinth \in L^2(\ud V)\}$ in the martingale case. Consequently,
$$
\int_t^T  Z \ud^{\HH_X} \cinth=\int_t^T Z_s \dot \cinth (s) \ud V(s)
$$
and the restriction of the Skorokhod integral to adapted integrands coincides with the classical It\^o stochastic integral for continuous Gaussian martingales.
\end{rem}

The proof of Theorem \ref{thm:nonexistence} relies on the concept of shifted quasi-conditional expectation, which we introduce now.
To this end, suppose that $X$ has an indefinite Wiener integral.
We denote the adjoint operator of $\I^{\mathcal{\HH}}_r$ by $(\I^{\mathcal{\HH}}_r)^*$ and abbreviate
$ \hh^r:= (\I^{\mathcal{\HH}}_r)^*(\hh)$ for $\hh\in \HH$. We also drop the superscripts $\HH$ and write simply $\I_r$ and $\I_r^*$ instead of $\I^{\HH}_r$ and $(\I^{\HH}_r)^*$ if no confusion 
can arise.

\begin{defn}[$\cc$-shifted quasi-conditional expectation]
Suppose $X$ has an indefinite Wiener integral. Let $r\in[0,T]$ and $\cc\in \HH$ be fixed. We define the subset $D_r^{\cc} \subset L^2_X$ by
$$
D_r^{\cc} = \left\{\xi \in L^2_X : \exists \eta^{\cc}_r\in L^2_X\text{ s.t. }\left(S\xi\right)((\hh+\cc)^r-\cc) = \left(S\eta^{\cc}_r\right)(\hh),\forall \hh \in \HH\right\}.
$$
For $\xi \in D_r^{\cc}$, the corresponding $\eta^{\cc}_r \in L^2_X$ is uniquely determined. Such $\eta^{\cc}_r$ is called \emph{$\cc$-shifted quasi-conditional expectation} 
of $\xi$ and denoted by $\widetilde{E}^{\cc}\left[\xi | \mathcal{F}_r^X\right]$. In the case $\cc=0$ we write $\widetilde{E}\left[\xi|\mathcal{F}_r^X\right]$ and $D_r$ instead 
of $\widetilde{E}^{0}\left[\xi | \mathcal{F}_r^X\right]$ and $D_r^0$, and we call the corresponding operator the \emph{quasi-conditional expectation}.
\end{defn}

Note that, by Theorem \ref{thm:martingale_change_of_measure} below, shifted quasi-conditional expectation coincides with the 
classical conditional expectation under a Cameron-Martin shift in the martingale case.

We will study shifted quasi-conditional expectations in more detail in Section \ref{sec:QC}, to which we also postpone the proofs of the following theorems.
The first one reveals the fact that the domain $D_r^{\cc}$ can be the whole $L^2_X$ only in the case of martingales, and the second one 
is a uniqueness and representation theorem for the $Y$-part of a mild solution in terms of operator $\widetilde{E}^{\cc}\left[\cdot |\mathcal{F}_r^X\right]$.
\begin{thm}
\label{thm:mg_domain_connection}
Suppose $X$ has an indefinite Wiener integral. Then the following are equivalent: 
\begin{enumerate}
\item $X$ is not a martingale,
\item for every $c \in \HH$ there is an $r\in[0,T]$ such that $D_r^{\cc} \subsetneqq L^2_X$.
\end{enumerate}
\end{thm}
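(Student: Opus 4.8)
The plan is to establish the two implications separately; (ii)$\Rightarrow$(i) is soft and (i)$\Rightarrow$(ii) carries the weight. Throughout I use that, by the standing assumption, (I$_{\HH_X}$) holds, so each $\I_r:=\I_r^{\HH_X}$ is a \emph{bounded} operator on $\HH_X$, and that on the first chaos $H_X$ one has, by (\ref{eq:S_transform_first_chaos2}), $\widetilde E[\,\cdot\mid\mathcal F_r^X]=I\circ\I_r\circ I^{-1}$; thus ``quasi-conditional expectation has operator norm $>1$ on the first chaos'' just means $\|\I_r\|_{\HH_X}>1$.

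\emph{(ii)$\Rightarrow$(i), by contraposition.} Assume $X$ is a Gaussian martingale for its natural filtration. As in Remark~\ref{rem:BSDE_martingale}, then $\HH_X=L^2(\ud V)$ and $\I_r$ is the (self-adjoint) multiplication operator by $\1_{(0,r]}$, hence an orthogonal projection; so by (\ref{eq:S_transform_properties}) the Wick exponential factorises into the independent product $e^{\diamond I(\hh)}=e^{\diamond I(\I_r\hh)}\,e^{\diamond I((\mathrm{id}-\I_r)\hh)}$, and a direct computation shows that, for \emph{every} $\xi\in L^2_X$, the random variable $\eta_r^{\cc}:=\E[\xi\,e^{\diamond(-I((\mathrm{id}-\I_r)\cc))}\mid\mathcal F_r^X]$ satisfies the defining $S$-transform identity of $D_r^{\cc}$. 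It lies in $L^2_X$ because integrating out the ``future'' coordinate against the fixed Wick exponential $e^{\diamond(-I((\mathrm{id}-\I_r)\cc))}$ is a bounded operation on $L^2_X=L^2(\mathcal F_r^X)\otimes L^2(\mathcal F_{(r,T]}^X)$ (a Wick exponential of a Gaussian belongs to every $L^p$). This is the content of Theorem~\ref{thm:martingale_change_of_measure}, and it gives $D_r^{\cc}=L^2_X$ for every $r$ and $\cc$, contradicting (ii).

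\emph{(i)$\Rightarrow$(ii).} First reduce to the first chaos. The operator $\I_r$ is idempotent ($\I_r\1_{(0,t]}=\1_{(0,t\wedge r]}$ and $\I_r$ then fixes $\1_{(0,t\wedge r]}$), and a bounded idempotent is either $0$ or has norm $\ge1$, with norm $1$ precisely when it is an orthogonal projection. So if $\|\I_r\|_{\HH_X}\le1$ for all $r$, each $\I_r$ is the orthogonal projection onto $\overline{\operatorname{span}}\{\1_{(0,u]}:u\le r\}$; mapping the increment $\1_{(s,t]}$ ($s\ge r$) to $0$ then forces $\E[(X_t-X_s)X_u]=0$ for $u\le r\le s<t$, so by Gaussianity $X$ has independent increments and, being centred with $X_0=0$, is a martingale. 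Hence, if $X$ is \emph{not} a martingale, fix $r$ and a unit $\hh_0\in\HH_X$ with $\lambda:=\|\I_r\hh_0\|_{\HH_X}>1$, and set $g:=I(\hh_0)$ and $u:=\lambda^{-1}\I_r\hh_0$ (both $g$ a standard Gaussian and $u$ a unit vector). Now amplify: with $a_n:=\lambda^{-n/2}/\sqrt{n!}$, let $\xi\in L^2_X$ be the element of the chaos generated by $g$ with $(S\xi)(\hh')=\sum_n a_n\la\hh_0,\hh'\ra_{\HH}^n$ for all $\hh'\in\HH$ (e.g.\ $\xi=\sum_n a_n g^{\diamond n}$ with Wick powers $g^{\diamond n}$, $\|g^{\diamond n}\|_{L^2_X}^2=n!$, so $\|\xi\|_{L^2_X}^2=\sum_n\lambda^{-n}<\infty$). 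If $\xi\in D_r^{\cc}$ with witness $\eta\in L^2_X$, evaluate $(S\xi)((\hh+\cc)^r-\cc)=(S\eta)(\hh)$ along $\hh=s\,u$, $s\in\R$: since $(\hh+\cc)^r-\cc=\I_r^*\hh+(\I_r^*\cc-\cc)$ and $\la\hh_0,\I_r^*u\ra_{\HH}=\la\I_r\hh_0,u\ra_{\HH}=\lambda$, the left side is $\Phi(s):=\sum_n a_n(\gamma+\lambda s)^n$ with $\gamma:=\la\hh_0,\I_r^*\cc-\cc\ra_{\HH}$, i.e.\ $\Phi(s)=\sum_n(\gamma\lambda^{-1/2}+\lambda^{1/2}s)^n/\sqrt{n!}$, which by an elementary saddle-point estimate satisfies $\lim_{s\to+\infty}s^{-2}\log\Phi(s)=\lambda/2$. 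But Cauchy--Schwarz gives $|(S\eta)(s\,u)|=|\E[\eta\,e^{\diamond sI(u)}]|\le\|\eta\|_{L^2_X}\,\|e^{\diamond sI(u)}\|_{L^2_X}=\|\eta\|_{L^2_X}\,e^{s^2/2}$, using $\|u\|_{\HH}=1$. Since $\lambda/2>1/2$, these are incompatible for large $s$, so no such $\eta$ exists: $\xi\notin D_r^{\cc}$ for this $r$ and every $\cc$, which is (ii). (Alternatively, one can run this step through the chaos-decomposition characterisation of $D_r^{\cc}$ in Theorem~\ref{thm:domain}: the $\cc$-shift perturbs the ``$n$-th chaos symbol'' only by strictly lower-order chaos terms, so its top-order part remains $\I_r^{\otimes n}$, of norm $\ge\lambda^n$, and the same $\xi$ is excluded.)

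\emph{Main obstacle.} The delicate point is the amplification step: turning the soft inequality $\|\I_r\|_{\HH_X}>1$ into a genuine non-membership statement (no $L^2$ witness $\eta$ at all, as opposed to mere unboundedness of an operator on a dense domain), and verifying that the Cameron--Martin/Wick shift by $\cc$ cannot swallow the super-$L^2$ growth that $\I_r$ creates on the highest chaos level. This is exactly what the chaos-decomposition characterisation of Theorem~\ref{thm:domain} is designed to control, so along that route the bulk of the work sits in establishing the characterisation and in identifying the first-chaos restriction of $\cc$-shifted quasi-conditional expectation with the affine map of linear part $\I_r$.
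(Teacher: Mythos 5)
Your proof is correct, and the implication (i)$\Rightarrow$(ii) follows a genuinely different route from the paper's. The paper first proves the chaos-decomposition characterisation of $D_r^{\cc}$ (Theorem \ref{thm:domain}, which in turn needs Lemma \ref{lma:convergence}), establishes $\Vert \I_r\Vert_{op}>1$ via the canonical correlation $d_r$ between $\vr$ and $\vT$ and the explicit vector $\Upsilon-d_r\Psi$ (Theorem \ref{thm:norm_increase} plus Remark \ref{rem:quasi_indefinite}), and then exhibits $\xi=\sum_k I_k(\ff^{\otimes k}/\sqrt{k!})$ with $\Vert\ff\Vert_{\HH}<1<\Vert\I_r\ff\Vert_{\HH}$ and the sign normalisation $\la\ff,\cc_r\ra_{\HH}\geq 0$, concluding non-membership because the chaos norms of the would-be image diverge (Example \ref{exa:not_in_domain}). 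Your test random variable is essentially the same one (your $\xi=\sum_n a_n g^{\diamond n}$ equals $\sum_n I_n(\ff^{\otimes n}/\sqrt{n!})$ with $\ff=\lambda^{-1/2}\hh_0$), but you rule out an $L^2$ witness $\eta$ directly: evaluating the defining $S$-transform identity along the ray $s\mapsto su$ yields super-Gaussian growth $e^{\lambda s^2/2}$ on the left, while Cauchy--Schwarz caps $|(S\eta)(su)|$ by $\Vert\eta\Vert_{L^2_X}e^{s^2/2}$. This bypasses Theorem \ref{thm:domain} entirely, needs no sign condition on $\la\ff,\cc_r\ra_{\HH}$ (the shift by $\cc$ only translates the argument of the entire function and cannot alter the quadratic growth rate), and is thus more elementary and self-contained; what it gives up is the characterisation of the domain itself, which the paper obtains as a result of independent interest and reuses elsewhere. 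Your reduction to $\Vert\I_r\Vert_{op}>1$ via the fact that a norm-one idempotent is an orthogonal projection is likewise a clean alternative to the paper's $d_r$-argument, which however is quantitative. The direction (ii)$\Rightarrow$(i) is handled the same way in both proofs, via the martingale case Theorem \ref{thm:martingale_change_of_measure}.
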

\begin{thm}
\label{thm:solution_Y}
Under the standing assumptions of this section on the coefficients, suppose that $(Y,Z,\mathcal{H})$ is a mild solution to the linear BSDE (\ref{BSDE_differentialform}). 
Define 
$$
\tilde{\xi} = \xi - \int_0^T A(s)G_s\ud\gamma(s),\quad A(t) = e^{\int_t^T a(s) \ud \gamma(s)}.
$$
Then, $\tilde\xi \in D^{\cc}_t$ for every $t\in[0,T]$, and
\begin{equation}
\label{eq:solution_Y_abstract}
Y_t = A(t)^{-1}\widetilde{E}^{\cc}\left[\tilde\xi|\mathcal{F}_t^X\right]+ A(t)^{-1}\int_0^t A(s)G_s \ud \gamma(s).
\end{equation}
\end{thm}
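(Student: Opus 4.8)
The plan is to push the whole equation through the $S$-transform, evaluate the resulting scalar identity at a carefully chosen \emph{shifted} test function that annihilates the $Z$-contribution on $[r,T]$, solve the remaining scalar linear equation with the integrating factor $A$, and finally identify the outcome with the $S$-transform of the $\cc$-shifted quasi-conditional expectation, exploiting adaptedness of $Y$ and $G$. Throughout, $\cc\in\HH_X$ is the deterministic integrand associated with $\cinth$, and $\inth$ denotes the Cameron--Martin function of a generic $\hh\in\HH_X$. Fix $\hh\in\Cset_{\mathcal{H}}$ and apply the continuous linear functional $\zeta\mapsto(S\zeta)(\hh)=\E[\zeta e^{\diamond I(\hh)}]$ to (\ref{eq:BSDE_abstract}). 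Interchanging $S$ with the Lebesgue--Stieltjes integrals (Fubini, justified by the standing integrability assumptions), and using the defining relation (\ref{eq:abstract_integral_def}) together with the fact that $S$ acts on the $L^2_X$-slot while $\I^{\mathcal{H}}_{(t,T]}=\I^{\mathcal{H}}_T-\I^{\mathcal{H}}_t$ acts on the $\mathcal{H}$-slot, the two $Z$-terms collapse into a single integral against the Cameron--Martin function of $\hh+\cc$ (here $\cc\in\Cset_{\mathcal{H}}$ since $\mathcal{H}$ is a space of integrands for $\cinth$, and $\Cset_{\mathcal{H}}$ is a subspace, so $\hh+\cc\in\Cset_{\mathcal{H}}$); explicitly, for all $t\in[0,T]$ and $\hh\in\Cset_{\mathcal{H}}$,
\[
(SY_t)(\hh)=(S\xi)(\hh)-\int_t^T\!\big(a(s)(SY_s)(\hh)+(SG_s)(\hh)\big)\ud\gamma(s)-\int_0^T(\I^{\mathcal{H}}_T-\I^{\mathcal{H}}_t)\big((SZ)(\hh)\big)\ud^{\mathcal{H}}(\inth+\cinth).
\]

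Now fix $r\in[0,T]$ and $\hh_0\in\Cset_{\mathcal{H}}$, and use the displayed identity with $\hh:=(\hh_0+\cc)^r-\cc=\I_r^*\hh_0+\I_r^*\cc-\cc\in\Cset_{\mathcal{H}}$, restricting to $t\in[r,T]$. For any $\gg\in\Cset_{\mathcal{H}}$ with Cameron--Martin function $\underline{g}$, the continuous functional $v\mapsto\int_0^T\I^{\mathcal{H}}_t(v)\ud^{\mathcal{H}}\underline{g}$ sends $\1_{(0,s]}$ to $\underline{g}(s\wedge t)=\la\1_{(0,s]},\I_t^*\gg\ra_{\HH_X}$, hence coincides with integration against the Cameron--Martin function of $\I_t^*\gg$; therefore $\int_0^T(\I^{\mathcal{H}}_T-\I^{\mathcal{H}}_t)(v)\ud^{\mathcal{H}}\underline{g}$ is integration of $v$ against the Cameron--Martin function of $\gg-\I_t^*\gg$. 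For $\gg=\hh+\cc=\I_r^*(\hh_0+\cc)$ and $t\ge r$ one has $\I_t^*\gg=\I_{r\wedge t}^*(\hh_0+\cc)=\I_r^*(\hh_0+\cc)=\gg$ (using $\I_t\I_r=\I_{r\wedge t}$), so $\gg-\I_t^*\gg=0$ and the entire $Z$-term vanishes for every $t\in[r,T]$. Writing $\Psi(t):=(SY_t)(\hh)$, we are left with the scalar linear equation $\Psi(t)=(S\xi)(\hh)-\int_t^T(a(s)\Psi(s)+(SG_s)(\hh))\ud\gamma(s)$ on $[r,T]$ (all integrals absolutely convergent by the standing assumptions). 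Since $\gamma$ is continuous of bounded variation, $A(t)=e^{\int_t^T a\,\ud\gamma}$ obeys $\ud A(t)=-a(t)A(t)\ud\gamma(t)$, so integration by parts for continuous functions of bounded variation gives $\ud(A(t)\Psi(t))=A(t)(SG_t)(\hh)\ud\gamma(t)$; integrating from $r$ to $T$, using $A(T)=1$ and $\tilde\xi=\xi-\int_0^TA(s)G_s\ud\gamma(s)\in L^2_X$ (which holds by the standing assumptions since $A$ is bounded),
\[
A(r)(SY_r)(\hh)=(S\xi)(\hh)-\int_r^TA(s)(SG_s)(\hh)\ud\gamma(s)=(S\tilde\xi)(\hh)+\int_0^rA(s)(SG_s)(\hh)\ud\gamma(s).
\]

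It remains to turn the left-hand side into a statement about $\hh_0$. Let $P_r$ be the orthogonal projection of the first chaos $H_X$ onto $\overline{\text{span}\{X_s:0\le s\le r\}}$. Since $Y_r$ is $\mathcal{F}_r^X$-measurable, writing $I(\hh)=P_rI(\hh)+(I-P_r)I(\hh)$ — the second summand being a centered Gaussian orthogonal to, hence independent of, $\mathcal{F}_r^X$ — and invoking (\ref{eq:S_transform_properties}) (so that $e^{\diamond I(\hh)}=e^{\diamond P_rI(\hh)}e^{\diamond(I-P_r)I(\hh)}$ with the second factor of mean one and independent of $Y_r$), we get $(SY_r)(\hh)=\E[Y_re^{\diamond P_rI(\hh)}]$; thus $(SY_r)$ depends on its argument only through $P_rI(\cdot)$, and likewise $(SG_s)$ only through $P_sI(\cdot)$. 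Because $\I_r\1_{(0,s]}=\1_{(0,s]}$ for $s\le r$, one checks $P_rI(\I_r^*\gg)=P_rI(\gg)$ for all $\gg\in\HH_X$, whence $P_rI((\hh_0+\cc)^r-\cc)=P_rI(\hh_0)$ and, via $P_sP_r=P_s$, $P_sI((\hh_0+\cc)^r-\cc)=P_sI(\hh_0)$ for $s\le r$. Hence $(SY_r)(\hh)=(SY_r)(\hh_0)$ and $(SG_s)(\hh)=(SG_s)(\hh_0)$ for $s\le r$, and the previous display becomes, for every $\hh_0\in\Cset_{\mathcal{H}}$, $(S\tilde\xi)((\hh_0+\cc)^r-\cc)=(S\eta_r)(\hh_0)$ with $\eta_r:=A(r)Y_r-\int_0^rA(s)G_s\ud\gamma(s)\in L^2_X$. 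Both sides are continuous in $\hh_0\in\HH$ (the map $\hh_0\mapsto(\hh_0+\cc)^r-\cc$ is continuous since $\I_r^*$ is bounded, and the $S$-transform of a fixed element of $L^2_X$ is continuous on $\HH$), and $\Cset_{\mathcal{H}}$ is dense in $\HH$, so the identity extends to all $\hh_0\in\HH$. By the definition of $D_r^{\cc}$ this gives $\tilde\xi\in D_r^{\cc}$ and $\widetilde E^{\cc}[\tilde\xi|\mathcal{F}_r^X]=\eta_r$, i.e.\ (\ref{eq:solution_Y_abstract}) after dividing by $A(r)$; since $r\in[0,T]$ was arbitrary, the proof is complete.

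The crux is the shift in the second paragraph: recognizing that $(\hh_0+\cc)^r-\cc$ is exactly the test function for which the generalized Skorokhod (plus $\cinth$) term disappears on all of $[r,T]$, and verifying this vanishing against the abstract definition of $\ud^{\mathcal{H}}$ through the adjoints $\I_t^*$ — together with the adaptedness bookkeeping in the third paragraph that lets one swap $(SY_r)((\hh_0+\cc)^r-\cc)$ for $(SY_r)(\hh_0)$. The $S$-transform pass and the linear Stieltjes-ODE solve are routine.
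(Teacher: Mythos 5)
Your proof is correct and follows essentially the same route as the paper: evaluate the $S$-transform of the integral equation at the shifted test function $(\hh+\cc)^r-\cc$ so that the combined $Z$-terms vanish on $[r,T]$, solve the resulting scalar Stieltjes equation with the integrating factor $A$, and use adaptedness of $Y$ and $G$ to identify the result with the defining relation of $\widetilde{E}^{\cc}[\,\cdot\,|\mathcal{F}_r^X]$. The only difference is that you re-derive inline (via the adjoint identity $\I_t^*\I_r^*=\I_r^*$ and an orthogonal-projection/independence argument) the two auxiliary facts the paper delegates to Proposition \ref{lma:removing_Z_part} and Theorem \ref{lma:measurable}; both of your replacements are valid.
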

We note that the non-existence result in Theorem \ref{thm:nonexistence} is a direct consequence of Theorems \ref{thm:mg_domain_connection} and
\ref{thm:solution_Y}.

\section{On quasi-conditional expectation}
\label{sec:QC}
In this section, we study in detail the shifted quasi-conditional expectation operator 
$\widetilde{E}^{\cc}\left[\cdot | \mathcal{F}_r^X\right]$.

We first explore how quasi-conditional expectation acts on the first chaos, and, at the same time, 
we provide an equivalent characterization for the existence of indefinite Wiener integrals 
in terms of complementary spaces. To this end, we introduce some notation.
Since $X_0=0$, the first chaos $H_X$ can equivalently be defined as the closed linear subspace spanned by increments
$X_u - X_s, \quad u,s\in[0,T]$. For a fixed number $r\in [0,T]$, denote by $\vr$ and $\vT$ the closed linear spaces 
$$
\vr = \overline{\text{span}\{X_u : u\in[0,r]\}}
$$
and
$$
\vT = \overline{\text{span}\{X_u-X_r : u\in[r,T]\}}
$$
respectively, and denote the orthogonal complements of these spaces by $\vro$ and $\vTo$.

\begin{thm}
\label{thm:quasi_equivalent}
The following statements are equivalent:
\begin{enumerate}
\item $X$ has an indefinite Wiener integral,
\item the spaces $\vr$ and $\vT$ are complementary, i.e. every $\fc \in H_X$ can be uniquely decomposed as
\begin{equation}
\label{eq:decomposition}
\fc = \fc_{\vr} + \fc_{\vT},
\end{equation}
where $\fc_{\vr} \in \vr$ and $\fc_{\vT} \in \vT$,
\item the spaces $\vro$ and $\vTo$ are complementary.
\end{enumerate}
In this case, $H_X \subset D_r$ for every $r\in[0,T]$, and for every element $h=I(\hh)$ we have
\begin{equation}
\label{eq:quasi_first_chaos}
\widetilde{E}\left[\fc | \mathcal{F}_r^X\right] = h_{\vr} = I\left(\I_r\hh\right).
\end{equation}
\end{thm}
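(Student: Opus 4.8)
The plan is to prove the cycle of implications (i) $\Rightarrow$ (ii) $\Rightarrow$ (iii) $\Rightarrow$ (i), and then to read off the formula \eqref{eq:quasi_first_chaos} for quasi-conditional expectation on the first chaos from the decomposition in (ii). First I would translate everything into the Hilbert spaces $\HH_X$ and $H_X$, which are isometric via $I$: under this isometry the indicator $\1_{(0,u]}$ corresponds to $X_u$, so $\vr$ corresponds to the closure of $\mathrm{span}\{\1_{(0,u]}:u\in[0,r]\}$ in $\HH_X$, call it $L^{\HH}_{[0,r]}$, and $\vT$ to the closure of $\mathrm{span}\{\1_{(0,u]}-\1_{(0,r]}=\1_{(r,u]}:u\in[r,T]\}$, call it $L^{\HH}_{[r,T]}$. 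The existence of an indefinite Wiener integral means precisely that condition (I$_{\HH_X}$) holds, i.e.\ there is a bounded operator $\I_r:\HH_X\to\HH_X$ with $\I_r\1_{(0,t]}=\1_{(0,t\wedge r]}$.

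For (i) $\Rightarrow$ (ii): given $\I_r$, check that it is a (bounded) projection onto $L^{\HH}_{[0,r]}$ along $L^{\HH}_{[r,T]}$. Indeed $\I_r$ fixes each $\1_{(0,u]}$ with $u\le r$, hence fixes $L^{\HH}_{[0,r]}$, and it annihilates each generator $\1_{(r,u]}=\1_{(0,u]}-\1_{(0,r]}$ of $L^{\HH}_{[r,T]}$ since $\I_r\1_{(0,u]}=\1_{(0,r]}=\I_r\1_{(0,r]}$; by continuity $\I_r$ kills $L^{\HH}_{[r,T]}$. So $\I_r^2=\I_r$, $\mathrm{ran}\,\I_r=L^{\HH}_{[0,r]}$, $\ker\I_r\supseteq L^{\HH}_{[r,T]}$, and since $\1_{(0,t]}=\1_{(0,t\wedge r]}+(\1_{(0,t]}-\1_{(0,t\wedge r]})$ with the second summand in $L^{\HH}_{[r,T]}$, the total set of indicators lies in $L^{\HH}_{[0,r]}+L^{\HH}_{[r,T]}$, so this algebraic sum is dense; boundedness of the projection $\I_r$ then upgrades density to an actual direct-sum decomposition $\HH_X=L^{\HH}_{[0,r]}\oplus L^{\HH}_{[r,T]}$ (a standard fact: the range of a bounded idempotent is closed, and $\ker\I_r$ is closed and complementary, and here $\ker\I_r$ must equal $L^{\HH}_{[r,T]}$ because it contains it and is a complement of $L^{\HH}_{[0,r]}$, which $L^{\HH}_{[r,T]}$ already complements densely hence — by the closed-range argument — exactly). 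Transporting through $I$ gives (ii), with $\fc_{\vr}=I(\I_r\hh)$ and $\fc_{\vT}=I((\mathrm{id}-\I_r)\hh)$.

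For (ii) $\Rightarrow$ (i): a direct-sum decomposition $H_X=\vr\dotplus\vT$ of a Hilbert space into two closed subspaces automatically yields a bounded projection onto $\vr$ along $\vT$ (closed graph theorem), and pulling it back through $I$ defines a bounded $\I_r$ with $\I_r\1_{(0,t]}=\1_{(0,t\wedge r]}$ (check on generators: $X_t=X_{t\wedge r}+(X_t-X_{t\wedge r})$ splits $\vr$-part plus $\vT$-part), i.e.\ (I$_{\HH_X}$). For the equivalence with (iii), the cleanest route is orthogonality bookkeeping: write $H_X=\vr\oplus\vro=\vT\oplus\vTo$ and observe $\vr\subseteq (\vTo)^{\perp}$... actually the symmetric statement is that $\vr$ and $\vT$ are complementary if and only if $\vro$ and $\vTo$ are; I would prove this via the general Hilbert-space lemma that two closed subspaces $M,N$ satisfy $M\dotplus N=H$ iff $M^\perp\dotplus N^\perp=H$ (equivalently $M+N$ dense and $M\cap N=0$ with closed sum, passed to orthocomplements using $(M+N)^\perp=M^\perp\cap N^\perp$ and $(M\cap N)^\perp=\overline{M^\perp+N^\perp}$, together with the fact that the sum of two closed subspaces with trivial intersection is closed iff the sum of their orthocomplements is). The main obstacle is precisely this last point — that ``algebraically complementary'' (dense sum, zero intersection) forces ``closed sum'' — which is where boundedness of the projection/closed graph theorem does the real work and must be invoked carefully; everything else is bookkeeping.

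Finally, for the identification of quasi-conditional expectation on $H_X$: fix $\fc=I(\hh)\in H_X$ and $r\in[0,T]$. Using \eqref{eq:S_transform_first_chaos}, for any $\hh'\in\HH$ we have $(S\fc)(\hh')=\E[\fc\,I(\hh')]=\la\hh,\hh'\ra_{\HH}$; in particular $(S\fc)((\hh')^r)=\la\hh,(\hh')^r\ra_{\HH}=\la\hh,\I_r^*\hh'\ra_{\HH}=\la\I_r\hh,\hh'\ra_{\HH}=\E[I(\I_r\hh)\,I(\hh')]=(S\,I(\I_r\hh))(\hh')$. Here I used that with $\cc=0$ the argument $(\hh'+\cc)^r-\cc$ in the definition of $D_r^{\cc}$ reduces to $(\hh')^r=\I_r^*\hh'$, and that $I(\I_r\hh)\in H_X\subseteq L^2_X$ is a legitimate candidate for $\eta_r^0$. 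This shows $\fc\in D_r$ and $\widetilde E[\fc\mid\mathcal F_r^X]=I(\I_r\hh)$, which by (ii) equals $\fc_{\vr}=h_{\vr}$, giving \eqref{eq:quasi_first_chaos}.
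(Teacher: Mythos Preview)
Your proposal is correct. The main difference from the paper's argument is in the implication (i) $\Rightarrow$ (ii). You argue directly that $\I_r$ is a bounded idempotent on $\HH_X$, identify its range as $L^{\HH}_{[0,r]}$ and its kernel as $L^{\HH}_{[r,T]}$ (using density of the algebraic sum plus continuity of $\I_r$ to pin down the kernel), and then transport the resulting direct-sum decomposition through the isometry $I$. The paper instead first computes the $S$-transform $(Sf)(\hh^r)=\langle \I_r\ff,\hh\rangle_{\HH}$ to obtain $\widetilde E[f\mid\mathcal F_r^X]=I(\I_r\ff)$, then separately verifies via approximation by simple elements that this operator acts as the identity on $\vr$ and as zero on $\vT$, and finally deduces existence and uniqueness of the decomposition from these facts. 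Your route is more purely operator-theoretic and a bit shorter; the paper's route has the advantage that the formula \eqref{eq:quasi_first_chaos} emerges naturally in the middle of the argument rather than being checked afterwards. For (ii) $\Rightarrow$ (i) and (ii) $\Leftrightarrow$ (iii), as well as for the final $S$-transform computation giving \eqref{eq:quasi_first_chaos}, your approach coincides with the paper's. One small remark: your parenthetical justification that $\ker\I_r=L^{\HH}_{[r,T]}$ is a little compressed; the clean way to finish it is to take $x\in\ker\I_r$, approximate it by $a_n+b_n$ with $a_n\in L^{\HH}_{[0,r]}$, $b_n\in L^{\HH}_{[r,T]}$, apply $\I_r$ to get $a_n\to 0$, and conclude $b_n\to x\in L^{\HH}_{[r,T]}$.
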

\begin{proof}
The equivalence of \textbf{(ii)} and \textbf{(iii)} is well-known and a simple exercise. Indeed, if, e.g., $\vr$ and $\vT$ are complementary and $\fc \in H_X$ has a decomposition 
$\fc = \fc_{\vr} + \fc_{\vT}$, then the oblique projection $\Phi\fc = \fc_{\vr}$ is a 
bounded linear operator (see, e.g. \cite[pp. 97, Theorem 13.2]{conway}), and hence the fact that $\vro$ and $\vTo$ are complementary follows by considering the adjoint operator $\Phi^*$ of $\Phi$. 
It remains to prove equivalence of \textbf{(i)} and \textbf{(ii)}. \\
\textbf{(ii)} $\Rightarrow$ \textbf{(i)}: Suppose $\vr$ and $\vT$ are complementary and define
a map $\widetilde{E}_r : H_X \mapsto H_X$ by $\widetilde{E}_r \fc = \fc_{\vr}$. This operator
satisfies $\widetilde{E}_r X_t = X_{t\wedge r}$. Moreover, since $\vr$ and $\vT$ are complementary, the operator $\widetilde{E}_r$ is bounded (as already noted above). From this the existence of an indefinite Wiener integral follows from the isometry between $H_X$ and $\HH_X$.\\
\textbf{(i)} $\Rightarrow$ \textbf{(ii)}: Suppose $X$ has an indefinite Wiener integral, and, hence, quasi-conditional expectation is defined. Take $f=I(\ff)\in H_X$.
Then, for $\hh\in \HH$, by (\ref{eq:S_transform_first_chaos}) and the isometry between first chaos and the space of deterministic integrands,
$$
(Sf)(\hh^r)=\langle \ff, \I_r^*(\hh)\rangle_{\HH}=\langle \I_r\ff,\hh \rangle_{\HH}= S(I(\I_r\ff))(\hh).
$$
Thus, the first chaos is then contained in the domain of quasi-conditional expectation and
$$
\widetilde E[f|\mathcal{F}^X_r]=I(\I_r\ff).
$$
We next take $f_{\vr}=I(\ff_{\vr})\in \vr$ and consider a sequence $f_n=I(\ff_n)$ with $\ff_n=\sum_i \alpha^{(n)}_i {\bf 1}_{(0,t_i^{(n)}]}$, $0\leq 
t_i^{(n)}\leq r,$ converging to $f_{\vr}$ in $H_X$. Then, the same calculation as above shows, for $\hh\in \HH$,
$$
(Sf_{\vr})(\hh^r)=\lim_{n\rightarrow \infty} \langle \I_r\ff_n,\hh \rangle_{\HH}=\lim_{n\rightarrow \infty} \langle \ff_n,\hh \rangle_{\HH}= S(I(\ff_{\vr}))(\hh),
$$
because $\I_r\ff_n=\ff_n$. Hence,
$$
\widetilde E[f_{\vr}|\mathcal{F}^X_r]=f_{\vr}
$$
for $f_{\vr}\in \vr$. Finally, if $f_{\vT}=I(\ff_{\vT})\in \vT$ and  the sequence $f_n=I(\ff_n)$ with $\ff_n=\sum_i \alpha^{(n)}_i {\bf 1}_{(r,t_i^{(n)}]}$, $r\leq 
t_i^{(n)}\leq T,$ converges to $f_{\vT}$ in $H_X$, we obtain, for $\hh\in \HH$,
$$
(Sf_{\vT})(\hh^r)=\lim_{n\rightarrow \infty} \langle \I_r\ff_n,\hh \rangle_{\HH}=\lim_{n\rightarrow \infty} \langle \I_r\I_{(r,T]}\ff_n,\hh \rangle_{\HH}= 0,
$$
and, thus,
$$
\widetilde E[f_{\vT}|\mathcal{F}^X_r]=0
$$
for $f_{\vT}\in \vT$. With these computations we are ready to prove that $\vr$ and $\vT$ are complementary. That is, each $\fc$ admits a unique decomposition of form \eqref{eq:decomposition}. To show {\it uniqueness of the decomposition}, suppose that $f=f_{\vr}+f_{\vT}$ for some 
$f_{\vr}\in \vr$ and $f_{\vT}\in \vT$. Then, by the above computations,
$$
I(\I_r\ff)=\widetilde E[f|\mathcal{F}^X_r]=\widetilde E[f_{\vr}|\mathcal{F}^X_r]+\widetilde E[f_{\vT}|\mathcal{F}^X_r]=f_{\vr}.
$$
This shows uniqueness and (\ref{eq:quasi_first_chaos}). It remains to show {\it existence of the decomposition}. 
We first assume that $f=\sum_i \alpha_i X_{t_i}=I(\sum_i \alpha_i {\bf 1}_{(0,t_i]})=I(\ff)$ is a simple element in the first chaos. Then, $f$ admits 
the decomposition $f=f_{\vr} + f_{\vT}$ with
\begin{eqnarray*}
 f_{\vr}&=&\sum_i \alpha_i X_{t_i\wedge r}= I(\I_r\ff) \\
 f_{\vT}&=& \sum_i \alpha_i (X_{t_i\vee r}-X_r)= \sum_i \alpha_i (X_{t_i}-X_{t_i\wedge r})= I(\I_{(r,T]}\ff).
\end{eqnarray*}
Approximating a general $f\in H_X$ by a sequence of such simple elements $(f_n)$ and applying the continuity of $\I_r$ and $\I_{(r,T]}$ implies 
the decomposition $f=I(\I_r\ff)+I(\I_{(r,T]}\ff)$ for general $f\in H_X$, where (by the approximation argument)  
$I(\I_r\ff)\in \vr$ and $I(\I_{(r,T]}\ff)\in \vT$. 
\end{proof}

\begin{rem}
\label{rem:quasi_indefinite}
The above proof shows that the norms of the operators $\I_r: \HH_X \mapsto \HH_X$ and quasi-conditional expectation restricted to the first chaos $\widetilde{E}\left[\cdot | \mathcal{F}_r^X\right]: H_X \mapsto H_X$ are equal, i.e.
$$
\sup_{h \in H_X, h\neq 0} \frac{\Vert\widetilde{E}\left[h | \mathcal{F}_r^X\right]\Vert_{L^2_X}}{\Vert h\Vert_{L^2_X}} = \sup_{\hh \in \HH, \hh \neq 0} \frac{\Vert \I_r \hh\Vert_{\HH}}{\Vert \hh\Vert_{\HH}} =: \Vert \I_r \Vert_{op}.
$$ 
We also note that  application of the adjoint operator $\I_r^*$ of $\I_r$ to  $\hh\in \HH$ corresponds to the element 
$\fc_{\vTo}$ in the decomposition $\fc = \fc_{\vro} + \fc_{\vTo}$, i.e., for $\fc = I(\hh)$ we 
have $\fc_{\vTo} = I\left(\I_r^*\hh\right)$. Indeed, for every $I(\ff)\in L_{[r,T]}$, we have
$$
\E[I\left(\I_r^*\hh\right)I(\ff)]=\langle \I_r^*\hh,\ff \rangle_{\HH}= \langle \hh,\I_r \I_{[r,T]}\ff \rangle_{\HH}=0,
$$
and, for every  $I(\ff)\in L_{[0,r]}$, we have,
$$
\E[(I\left(\hh\right)-I\left(\I_r^*\hh\right))I(\ff)]= \langle \hh,\I_r \ff \rangle_{\HH}-\langle \hh,\I_r \I_{r}\ff \rangle_{\HH}=0.
$$
\end{rem}

The following elementary example shows how quasi-conditional expectation ''behaves'' like classical conditional expectation.
\begin{exa}
\label{ex:simple}
 For every $\cc \in \HH$ and any $f \in H_X$ we have $f \in D_r^{\cc}$ and $e^{\diamond f} \in D_r^{\cc}$. Moreover,
\begin{eqnarray}
 \label{eq:quasi_first_chaos_shift}\widetilde{E}^{\cc}\left[f | \mathcal{F}_r^X\right]&=& \widetilde{E}\left[f | \mathcal{F}_r^X\right] 
- \E\left[\left(f - \widetilde{E}\left[f | \mathcal{F}_r^X\right]\right)I(\cc)\right] \\
 \widetilde{E}^{\cc}\left[e^{\diamond \,f}| \mathcal{F}_r^X\right]&=&  e^{-\E\left[\left(f - \widetilde{E}\left[f | \mathcal{F}_r^X\right]\right) I(\cc)\right]} e^{\diamond \,\widetilde{E}\left[f | \mathcal{F}_r^X\right]}.
\end{eqnarray}
In particular, for every $t\in[0,T]$ we have, thanks to (\ref{eq:quasi_first_chaos}),
\begin{eqnarray*}
 \widetilde{E}^{\cc}\left[X_t | \mathcal{F}_r^X\right]&=& X_{t\wedge r} - \E\left[\left(X_t - X_{t\wedge r}\right)
 I(\cc)\right] \\
 \widetilde{E}^{\cc}\left[e^{\diamond \,X_t}| \mathcal{F}_r^X\right]&=&  e^{-\E[(X_t-X_{t\wedge r}) I(\cc)]} e^{\diamond \,X_{t\wedge r}}.
\end{eqnarray*}
Indeed, for $f = I(\ff)$ and any $ \hh\in \HH$, we have by (\ref{eq:quasi_first_chaos}),
\begin{eqnarray*}
 && (Sf)((\hh+\cc)^r-\cc)\\ &=& \langle \ff, (\I^{\mathcal{\HH}}_r)^*(\hh+\cc)-\cc \rangle_\HH= \langle \I^{\mathcal{\HH}}_r \ff, \hh+\cc\rangle_\HH-\langle \ff,\cc\rangle_\HH 
\\ &=& \langle \I^{\HH}_r \ff, \hh \rangle_\HH - \langle \ff - \I^{\HH}_r \ff,\cc \rangle_\HH = \E\left[\widetilde{E}_r \left[f | \mathcal{F}_r^X\right] I(\hh)\right]-\E\left[\left(f - \widetilde{E}\left[f| \mathcal{F}_r^X\right]\right)I(\cc)\right] 
\\ &=& \left(S\left(\widetilde{E}\left[f| \mathcal{F}_r^X\right] - \E\left[\left(f - \widetilde{E}\left[f| \mathcal{F}_r^X\right]\right)I(\cc)\right]\right)\right)(\hh).
\end{eqnarray*}
Similarly, by (\ref{eq:S_transform_properties}),
\begin{eqnarray*}
 && \left(Se^{\diamond \,f}\right)((\hh+\cc)^r-\cc)\\ &=& e^{\langle \ff, (\I^{\mathcal{\HH}}_r)^*(\hh+\cc)-\cc \rangle_\HH}=e^{\langle \I^{\HH}_r \ff, \hh \rangle_\HH - \langle \ff - \I^{\HH}_r \ff,\cc \rangle_\HH  } \\
&=& \left(S\left((e^{-\E\left[\left(f - \widetilde{E}\left[f | \mathcal{F}_r^X\right]\right) I(\cc)\right]} e^{\diamond \,\widetilde{E}\left[f | \mathcal{F}_r^X\right]}\right)\right)(\hh).
\end{eqnarray*}
Thus, for $\cc=0$, the above formulas mimic the martingale properties of a Brownian motion $W$ and its stochastic exponential for the (in general) non-martingale $X$ and its Wick exponential. For general 
$\cc$, the above formulas generalize corresponding very well-known results for the conditional expectation of a Brownian motion and its stochastic exponential under a change of measure induced by a 
Cameron-Martin shift, cp. also Theorem \ref{thm:martingale_change_of_measure} below.
\end{exa}
As an immediate consequence of this example we obtain:
\begin{thm}
\label{thm:quasi}
Suppose $X$ has an indefinite Wiener integral and let $c \in \HH$ be given. Then the mapping $\widetilde{E}^{\cc}[\xi |\mathcal{F}_r^X] : D_r^{\cc} \mapsto L^2_X$ is a densely defined linear closed operator. 
\end{thm}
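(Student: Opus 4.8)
The plan is to check, in turn, the three properties that make $\widetilde{E}^{\cc}[\,\cdot\,|\mathcal{F}_r^X]$ a densely defined closed linear operator on $L^2_X$: linearity, density of the domain $D_r^{\cc}$, and closedness of the graph. The first two are essentially bookkeeping together with Example \ref{ex:simple}; the closedness is the substantive point, although it too is routine once the right continuity is identified.

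\emph{Linearity.} This is immediate from the defining relation. If $\xi_1,\xi_2\in D_r^{\cc}$ have shifted quasi-conditional expectations $\eta_1,\eta_2$, then, since for every fixed $\gg\in\HH$ the map $\zeta\mapsto(S\zeta)(\gg)=\E[\zeta\,e^{\diamond I(\gg)}]$ is linear in $\zeta$, we get, for all scalars $\lambda,\mu$ and all $\hh\in\HH$, $(S(\lambda\xi_1+\mu\xi_2))((\hh+\cc)^r-\cc)=(S(\lambda\eta_1+\mu\eta_2))(\hh)$. Hence $\lambda\xi_1+\mu\xi_2\in D_r^{\cc}$ with shifted quasi-conditional expectation $\lambda\eta_1+\mu\eta_2$. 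For the density of $D_r^{\cc}$, Example \ref{ex:simple} shows $e^{\diamond f}\in D_r^{\cc}$ for every $f\in H_X$. Since $\{e^{\diamond f}:f\in H_X\}$ is total in $L^2_X$ (Corollary 3.40 in \cite{janson}), its linear span is dense in $L^2_X$, and by the linearity of the domain just established this span is contained in $D_r^{\cc}$. Thus $D_r^{\cc}$ is dense in $L^2_X$.

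\emph{Closedness.} Let $(\xi_n)\subset D_r^{\cc}$ with $\xi_n\to\xi$ in $L^2_X$ and $\eta_n:=\widetilde{E}^{\cc}[\xi_n|\mathcal{F}_r^X]\to\eta$ in $L^2_X$. For a fixed element $\gg\in\HH$ the functional $\zeta\mapsto(S\zeta)(\gg)=\E[\zeta\,e^{\diamond I(\gg)}]$ is continuous on $L^2_X$, because $e^{\diamond I(\gg)}\in L^2_X$. Applying this once with $\gg=(\hh+\cc)^r-\cc\in\HH$ and once with $\gg=\hh$, and using $(S\xi_n)((\hh+\cc)^r-\cc)=(S\eta_n)(\hh)$ for all $n$ and all $\hh\in\HH$, we pass to the limit and obtain $(S\xi)((\hh+\cc)^r-\cc)=(S\eta)(\hh)$ for every $\hh\in\HH$. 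Hence $\xi\in D_r^{\cc}$, and since the shifted quasi-conditional expectation is uniquely determined (as recorded in its definition, using that the $S$-transform restricted to $\HH$ separates points of $L^2_X$), $\widetilde{E}^{\cc}[\xi|\mathcal{F}_r^X]=\eta$. This proves that the operator is closed.

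I do not expect a genuine obstacle here; the only points needing care are that $(\hh+\cc)^r-\cc$ really is an element of $\HH$ — clear since $\I_r^*$ maps $\HH$ into $\HH$ and $\HH$ is a vector space — so that the defining relation is meaningful, and that one tests against all of $\HH$ rather than merely a dense subset, which is exactly how $D_r^{\cc}$ is defined. Everything else is a direct application of the continuity of the $S$-transform in its $L^2_X$-argument for a fixed deterministic integrand.
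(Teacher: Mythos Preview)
Your proof is correct and follows essentially the same approach as the paper. The paper's own proof simply declares linearity and closedness ``obvious by the definition'' and invokes Example \ref{ex:simple} for density; you have merely filled in the details (continuity of the $S$-transform in its $L^2_X$-argument for fixed $\gg\in\HH$, and the observation that $(\hh+\cc)^r-\cc\in\HH$) that the paper leaves to the reader.
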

\begin{proof}
The linearity and closedness are obvious by the definition. Furthermore, $\widetilde{E}^{\cc}\left[\cdot |\mathcal{F}_r^X\right]$ is densely defined since 
$\text{span}\left(e^{\diamond \fc}, \fc \in H_X\right)$ is dense in $L^2_X$, and $e^{\diamond \fc} \in D_r^{\cc}$ for every $ \fc\in H_X$ by Example \ref{ex:simple}.
\end{proof}

We next relate shifted quasi-conditional expectation to classical conditional expectation in the martingale case.

\begin{thm}
\label{thm:martingale_change_of_measure}
Suppose $X$ is a martingale. Then $X$ has an indefinite Wiener integral. Moreover, for any 
$\xi \in L^2_X$ and any $\cc \in \HH_X$ we have $\xi \in D_r^{\cc}$, and 
$$
\widetilde{E}^{\cc}\left[\xi | \mathcal{F}_r^X\right] = \E^{Q_{-\cc}}\left[\xi | \mathcal{F}_r^X\right],
$$ where $\E^{Q_{-\cc}}\left[\cdot | \mathcal{F}_r^X\right]$ denotes the conditional expectation under the change of measure 
$
\ud Q_{-\cc} = e^{\diamond I(-\cc)}\ud \P.
$
\end{thm}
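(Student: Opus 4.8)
The plan is to make every object explicit in the martingale case, exhibit a concrete candidate for $\widetilde E^\cc[\xi\mid\mathcal F_r^X]$, verify it against the defining $S$-transform identity of $D_r^\cc$, and then identify it with conditional expectation under $Q_{-\cc}$ via the abstract Bayes formula. I would begin by recording the structure of the martingale case: since $X$ is a centered Gaussian martingale with $X_0=0$, one has $R(t,s)=V(t\wedge s)$ with $V(t)=\E[X_t^2]$ (indeed $\E[X_tX_s]=\E[X_s\,\E[X_t\mid\mathcal F_s^X]]=\E[X_s^2]$ for $s\le t$), so $\1_{(0,t]}\mapsto\1_{(0,t]}$ extends to an isometric isomorphism $\HH_X\cong L^2([0,T],\ud V)$. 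Under this identification the operator $\I_r^{\HH_X}$ becomes multiplication by $\1_{(0,r]}$; this is a contraction, hence continuous --- which is exactly the claim that $X$ has an indefinite Wiener integral --- and an \emph{orthogonal} projection, hence self-adjoint, so $\hh^r=(\I_r^{\HH_X})^*\hh=\1_{(0,r]}\hh$. Writing $\cc=\cc_r+\cc_{r+}$ with $\cc_r:=\1_{(0,r]}\cc$ and $\cc_{r+}:=\1_{(r,T]}\cc$ (which are orthogonal in $\HH_X$, as are $\1_{(0,r]}\hh$ and $\1_{(r,T]}\hh$ for any $\hh$), one computes $(\hh+\cc)^r-\cc=\1_{(0,r]}\hh-\cc_{r+}=:\hh_r-\cc_{r+}$.

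Next I would bring in the Gaussian independence of a martingale: $\mathcal F_r^X=\sigma(X_u:u\le r)$ and $\mathcal G_{r+}:=\sigma(X_t-X_r:t\ge r)$ are independent (joint Gaussianity together with $\E[X_u(X_t-X_r)]=0$ for $u\le r\le t$), their join equals $\mathcal F_T^X$, and $I(\gg)$ is $\mathcal F_r^X$-measurable whenever $\gg$ is supported in $(0,r]$ and independent of $\mathcal F_r^X$ whenever $\gg$ is supported in $(r,T]$. I then set
\[
\eta:=\E_P\!\left[\xi\, e^{\diamond I(-\cc_{r+})}\mid\mathcal F_r^X\right].
\]
Since $\xi\,e^{\diamond I(-\cc_{r+})}$ is only in $L^1$, the membership $\eta\in L^2_X$ is not automatic; I would derive it from the Hilbert tensor decomposition $L^2_X=L^2(\mathcal F_r^X)\otimes L^2(\mathcal G_{r+})$, in which the map $\zeta\mapsto\E_P[\zeta\,e^{\diamond I(-\cc_{r+})}\mid\mathcal F_r^X]$ agrees on simple tensors $\alpha\beta$ with $\alpha\mapsto\alpha\,\langle\beta,e^{\diamond I(-\cc_{r+})}\rangle_{L^2_X}$, that is, with $\mathrm{id}\otimes\ell$ for the bounded functional $\ell(\beta)=\E[\beta\,e^{\diamond I(-\cc_{r+})}]$ of norm $\|e^{\diamond I(-\cc_{r+})}\|_{L^2_X}=e^{\|\cc_{r+}\|_{\HH}^2/2}$; hence this map extends to a bounded operator $L^2_X\to L^2(\mathcal F_r^X)\subseteq L^2_X$ whose value at $\xi$ is $\eta\in L^2_X$.

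With $\eta\in L^2_X$ at hand, I would verify the identity defining $D_r^\cc$. For $\hh=\hh_r+\hh_{r+}\in\HH$ the Wick-product rule $e^{\diamond f}e^{\diamond g}=e^{\E[fg]}e^{\diamond(f+g)}$ together with the orthogonalities noted above yields
\begin{align*}
(S\eta)(\hh)&=\E[\eta\,e^{\diamond I(\hh_r)}e^{\diamond I(\hh_{r+})}]=\E[\eta\,e^{\diamond I(\hh_r)}]=\E[\xi\,e^{\diamond I(-\cc_{r+})}e^{\diamond I(\hh_r)}]\\
&=\E[\xi\,e^{\diamond I(\hh_r-\cc_{r+})}]=(S\xi)\big((\hh+\cc)^r-\cc\big),
\end{align*}
where the second equality uses that $e^{\diamond I(\hh_{r+})}$ is independent of $\mathcal F_r^X$ with unit expectation while $\eta$ and $e^{\diamond I(\hh_r)}$ are $\mathcal F_r^X$-measurable, the third uses the tower property and the definition of $\eta$, and $\langle\cc_{r+},\hh_r\rangle_{\HH}=0$ is used throughout. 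By the uniqueness clause in the definition of $D_r^\cc$, this shows $\xi\in D_r^\cc$ and $\widetilde E^\cc[\xi\mid\mathcal F_r^X]=\eta$.

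Finally I would identify $\eta$ with $\E^{Q_{-\cc}}[\xi\mid\mathcal F_r^X]$. Splitting $e^{\diamond I(-\cc)}=e^{\diamond I(-\cc_r)}e^{\diamond I(-\cc_{r+})}$ (the factor $e^{\E[I(\cc_r)I(\cc_{r+})]}$ equals $1$ by orthogonality), measurability and independence give $\E_P[e^{\diamond I(-\cc)}\mid\mathcal F_r^X]=e^{\diamond I(-\cc_r)}$, so by the abstract Bayes formula --- legitimate because $\xi\in L^1(Q_{-\cc})$ by Cauchy--Schwarz ---
\[
\E^{Q_{-\cc}}[\xi\mid\mathcal F_r^X]=\frac{\E_P[\xi\,e^{\diamond I(-\cc)}\mid\mathcal F_r^X]}{\E_P[e^{\diamond I(-\cc)}\mid\mathcal F_r^X]}=\frac{e^{\diamond I(-\cc_r)}\,\E_P[\xi\,e^{\diamond I(-\cc_{r+})}\mid\mathcal F_r^X]}{e^{\diamond I(-\cc_r)}}=\eta .
\]
The step I expect to be the real obstacle is exactly establishing $\eta\in L^2_X$: a naive conditional Jensen estimate fails since $\xi^2e^{\diamond I(-\cc_{r+})}$ need not be integrable, so the independence/tensor-product argument is essential; everything else amounts to routine $S$-transform and Wick-calculus bookkeeping.
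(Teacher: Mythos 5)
Your proof is correct and follows the same overall strategy as the paper: identify $\HH_X$ with $L^2(\ud V)$ so that $\I_r=\I_r^*$ is multiplication by $\1_{(0,r]}$ (giving the indefinite Wiener integral), reduce $\E^{Q_{-\cc}}[\,\cdot\,|\mathcal F_r^X]$ via Bayes to $\E[\,\cdot\,e^{\diamond I(-\cc\1_{(r,T]})}|\mathcal F_r^X]$, establish that this lands in $L^2_X$, and then check the defining $S$-transform identity by splitting $\hh$ at $r$ and using orthogonality of the two pieces (the paper phrases this via the martingale property of $t\mapsto e^{\diamond I(\hh\1_{(0,t]})}$, which is the same fact as your independence of $\mathcal F_r^X$ and $\mathcal G_{r+}$).

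The one step where you genuinely diverge is the proof that $\eta=\E[\xi e^{\diamond I(-\cc_{r+})}|\mathcal F_r^X]\in L^2_X$. You obtain it from the factorization $L^2_X\cong L^2(\mathcal F_r^X)\otimes L^2(\mathcal G_{r+})$ and the boundedness of $\mathrm{id}\otimes\ell$ with $\ell(\beta)=\E[\beta e^{\diamond I(-\cc_{r+})}]$; this is clean, though you should add the (routine) remark that the bounded extension of $\mathrm{id}\otimes\ell$ coincides with the conditional-expectation map on all of $L^2_X$ because both are continuous into $L^1$ and agree on the dense span of simple tensors. The paper instead applies conditional H\"older,
\begin{equation*}
\bigl|\E[\xi e^{\diamond I(-\cc\1_{(r,T]})}|\mathcal F_r^X]\bigr|^2\leq \E[\xi^2|\mathcal F_r^X]\;\E\bigl[|e^{\diamond I(-\cc\1_{(r,T]})}|^2\,\big|\,\mathcal F_r^X\bigr],
\end{equation*}
and observes that the second conditional expectation equals the \emph{deterministic} constant $e^{\int_r^T\cc(u)^2\ud V(u)}$, whence $\E[\eta^2]\leq e^{\int_r^T\cc^2\ud V}\,\E[\xi^2]$. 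So your closing remark that ``a naive conditional Jensen estimate fails'' is not quite right: the pointwise conditional Cauchy--Schwarz bound never requires integrability of $\xi^2 e^{\diamond I(-\cc_{r+})}$, and it succeeds here precisely because the Wick exponential of an increment independent of $\mathcal F_r^X$ has constant conditional second moment. Both routes yield the same operator bound ($\|\eta\|_{L^2}\leq e^{\|\cc_{r+}\|_\HH^2/2}\|\xi\|_{L^2}$); yours is more structural, the paper's is more elementary.
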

\begin{proof}
Since $X$ is a martingale, the space of deterministic integrands $\HH_X$ is given by $L^2(\ud V)$, where $V(t)$ is the variance of $X_t$. 
This shows that $X$ has an indefinite Wiener integral and that $\I_r=\I_r^*$ is just the multiplication operator with the indicator 
function ${\bf 1}_{(0,r]}$. As, by (\ref{eq:S_transform_properties}), for $\hh \in L^2(dV)$, $0\leq s_1\leq \ldots s_n\leq s\leq t\leq T$,
and $\alpha_1,\ldots,\alpha_n\in \mathbb{R}$,
\begin{eqnarray*}
&&\E\left[e^{\diamond I\left(\hh\textbf{1}_{(0,t]}\right)}e^{\diamond \sum_i \alpha_i X_{s_i}}  \right]=e^{\int_0^T \hh(u)\textbf{1}_{(0,t]}(u)
\left(\sum_i \alpha_i {\bf 1}_{(0,s_i]}(u)\right)dV(u)}\\ &=& \E\left[e^{\diamond I\left(\hh\textbf{1}_{(0,s]}\right)}e^{\diamond \sum_i \alpha_i X_{s_i}}  \right],
\end{eqnarray*}
we observe that $t\mapsto e^{\diamond I\left(\hh\textbf{1}_{(0,t]}\right)}$ is a martingale. Hence, in view of Bayes' formula,
$$
\E^{Q_{-\cc}} \left[\xi | \mathcal{F}_r^X\right] = \E\left[\xi e^{\diamond I\left(-\cc\textbf{1}_{(r,T]}\right)} | \mathcal{F}_r^X\right].
$$
Moreover, by applying H\"older inequality for conditional expectation we get
\begin{equation*}
\left\vert\E^{Q_{-\cc}} \left[\xi | \mathcal{F}_r^X\right]\right\vert^2 \leq \E\left[\xi^2|\mathcal{F}_r^X\right] \E\left[\left\vert e^{\diamond I\left(-\cc\textbf{1}_{(r,T]}\right)}\right\vert^2 | \mathcal{F}_r^X\right].
\end{equation*}
By \eqref{eq:S_transform_properties} we have
$$
\left\vert e^{\diamond I\left(-\cc\textbf{1}_{(r,T]}\right)}\right\vert^2 = e^{\diamond I\left(-2\cc\textbf{1}_{(r,T]}\right)}e^{\int_r^T \cc(u)^2dV(u)},
$$
and thus, together with the above computations, we obtain
$$
\E\left[\left\vert e^{\diamond I\left(-\cc\textbf{1}_{(r,T]}\right)}\right\vert^2 | \mathcal{F}_r^X\right] = e^{\int_r^T \cc(u)^2dV(u)}.
$$
This gives
$$
\E\left[\E^{Q_{-\cc}} \left[\xi | \mathcal{F}_r^X\right]\right]^2 \leq e^{\int_r^T \cc(u)^2dV(u)}\E [\xi^2],
$$
and hence $\E^{Q_{-\cc}}\left[ \xi | \mathcal{F}_r^X\right] \in L^2_X$. 
We, thus, observe by \eqref{eq:S_transform_properties} and the martingale property of $e^{\diamond I\left(\hh\textbf{1}_{(0,t]}\right)}$,
\begin{eqnarray*}
&&\left(S\xi\right)((\hh+\cc)^r - \cc) = \E\left[\xi e^{\diamond \left(\hh\textbf{1}_{(0,r]} + \cc\textbf{1}_{(0,r]}-\cc\textbf{1}_{(0,T]}\right)}\right]\\
&=& \E\left[\xi e^{\diamond I\left(\hh\textbf{1}_{(0,r]} - \cc\textbf{1}_{(r,T]}\right)}\right]=\E\left[\xi e^{\diamond I\left(\hh\textbf{1}_{(0,r]}\right)}e^{\diamond I\left(-\cc\textbf{1}_{(r,T]}\right)}\right]
\\
&=&\E\left[\E \left[\xi e^{\diamond I\left(-\cc\textbf{1}_{(r,T]}\right)} | \mathcal{F}_r^X\right]e^{\diamond I\left(\hh\textbf{1}_{(0,r]}\right)}\right]\\
&=& \E\left[\E^{Q_{-\cc}} \left[\xi  | \mathcal{F}_r^X\right]e^{\diamond I\left(\hh\textbf{1}_{(0,r]}\right)}\right]=\E\left[\E^{Q_{-\cc}}\left[ \xi | \mathcal{F}_r^X\right] e^{\diamond I(\hh)}\right]
\\ &=& \left(S\E^{Q_{-\cc}}\left[ \xi | \mathcal{F}_r^X\right]\right)(\hh).
\end{eqnarray*}
Since $\E^{Q_{-\cc}}\left[ \xi | \mathcal{F}_r^X\right] \in L^2_X$, this shows that $\xi \in D_r^{\cc}$ and 
$\widetilde{E}^{\cc}\left[\xi | \mathcal{F}_r^X\right] = \E^{Q_{-\cc}}\left[\xi | \mathcal{F}_r^X\right]$. 
\end{proof}

We continue by showing that some well-known properties of classical conditional expectation carry over to shifted quasi-conditional 
expectation. We start with the towering property.

\begin{pro}[Towering property]
Suppose $X$ has an indefinite Wiener integral, let $r_1 < r_2$ and $\xi \in D^{\cc}_{r_2}$. 
Then,  $\widetilde{E}^{\cc}\left[\xi |\mathcal{F}_{r_2}^X\right] \in D^{\cc}_{r_1}$,  if and only if $\xi \in D^{\cc}_{r_1}$. In this case,
\begin{equation}
\label{eq:towering}
\widetilde{E}^{\cc}\left[\widetilde{E}^{\cc}\left[\xi |\mathcal{F}_{r_2}^X\right] |\mathcal{F}_{r_1}^X\right] = \widetilde{E}^{\cc}\left[\xi |\mathcal{F}_{r_1}^X\right]. 
\end{equation}
\end{pro}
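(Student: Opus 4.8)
The plan is to reduce everything to a single algebraic identity for the operators $\I_r$ on the space of deterministic integrands $\HH$, and then to unwind the definition of $D_r^{\cc}$. For $r\in[0,T]$ I would introduce the affine map $\phi_r:\HH\to\HH$, $\phi_r(\hh):=(\hh+\cc)^r-\cc=\I_r^*\hh+\I_r^*\cc-\cc$. With this notation, the defining property of the $\cc$-shifted quasi-conditional expectation, together with the fact that an element of $L^2_X$ is uniquely determined by its $S$-transform on all of $\HH$, says precisely: for $\zeta\in L^2_X$ one has $\zeta\in D_r^{\cc}$ and $\widetilde{E}^{\cc}[\zeta|\mathcal F_r^X]=\eta$ if and only if $\eta\in L^2_X$ and $(S\zeta)(\phi_r(\hh))=(S\eta)(\hh)$ for every $\hh\in\HH$.

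The key step will be the composition identity $\phi_{r_2}\circ\phi_{r_1}=\phi_{r_1}$ for $r_1\le r_2$. To get it, I would first observe that $\I_{r_1}\I_{r_2}=\I_{r_1}$: both sides are bounded operators on $\HH$ by (I$_{\HH}$) and they agree on the total family $\{\1_{(0,t]}\}$, since $\I_{r_1}\I_{r_2}\1_{(0,t]}=\1_{(0,t\wedge r_2\wedge r_1]}=\1_{(0,t\wedge r_1]}=\I_{r_1}\1_{(0,t]}$ when $r_1\le r_2$; passing to adjoints gives $\I_{r_2}^*\I_{r_1}^*=\I_{r_1}^*$. Then, expanding $\phi_{r_2}(\phi_{r_1}(\hh))$ and using this, the two inhomogeneous $\I_{r_2}^*\cc$ terms cancel and one is left with $\I_{r_2}^*\I_{r_1}^*\hh+\I_{r_2}^*\I_{r_1}^*\cc-\cc=\I_{r_1}^*\hh+\I_{r_1}^*\cc-\cc=\phi_{r_1}(\hh)$.

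Granting the identity, the argument is short. Since $\xi\in D_{r_2}^{\cc}$, the element $\eta:=\widetilde{E}^{\cc}[\xi|\mathcal F_{r_2}^X]$ satisfies $(S\eta)(\hh)=(S\xi)(\phi_{r_2}(\hh))$ for all $\hh$, hence $(S\eta)(\phi_{r_1}(\hh))=(S\xi)(\phi_{r_2}(\phi_{r_1}(\hh)))=(S\xi)(\phi_{r_1}(\hh))$ for all $\hh\in\HH$. If $\xi\in D_{r_1}^{\cc}$, the right-hand side is the $S$-transform of $\widetilde{E}^{\cc}[\xi|\mathcal F_{r_1}^X]\in L^2_X$, and the equivalence from the first paragraph forces $\eta\in D_{r_1}^{\cc}$ with $\widetilde{E}^{\cc}[\eta|\mathcal F_{r_1}^X]=\widetilde{E}^{\cc}[\xi|\mathcal F_{r_1}^X]$, which is exactly (\ref{eq:towering}). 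Conversely, if $\eta\in D_{r_1}^{\cc}$, the left-hand side is the $S$-transform of $\widetilde{E}^{\cc}[\eta|\mathcal F_{r_1}^X]\in L^2_X$, so the same displayed equality of $S$-transforms gives $\xi\in D_{r_1}^{\cc}$ (and the towering identity once again). I do not anticipate a real obstacle: the only point needing genuine care is the composition identity $\phi_{r_2}\circ\phi_{r_1}=\phi_{r_1}$, and inside it the cancellation of the term coming from the shift $\cc$ — which is precisely why it pays to package the shift into the affine map $\phi_r$ rather than to track $\cc$ by hand; the rest is just injectivity of the $S$-transform on $L^2_X$.
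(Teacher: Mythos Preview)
Your proposal is correct and follows essentially the same line as the paper: both establish $\I_{r_1}\I_{r_2}=\I_{r_1}$ on indicators, pass to adjoints to get $\I_{r_2}^*\I_{r_1}^*=\I_{r_1}^*$, and then read off the equality of $S$-transforms at $(\hh+\cc)^{r_1}-\cc$. Your packaging via the affine map $\phi_r$ and your explicit treatment of both directions of the equivalence are slightly more detailed than the paper's terse ``which proves the claim,'' but the underlying argument is the same.
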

\begin{proof}
Note first that $\I_{r_1}\circ \I_{r_2}=\I_{r_1}$, which certainly is true for indicator functions and then extends by continuity. Then,
by duality, $\I^*_{r_2}\circ \I^*_{r_1}=\I^*_{r_1}$. Thus, 
\begin{eqnarray*}
 \\ && S\left( \widetilde{E}^{\cc}\left[\xi |\mathcal{F}_{r_2}^X\right]\right)\left((\hh+\cc)^{r_1} - \cc\right)=
 S(\xi)\left(\left((\hh+\cc)^{r_1}\right)^{r_2} - \cc\right)\\ &=& S(\xi)\left((\hh+\cc)^{r_1} - \cc\right),
\end{eqnarray*}
which proves the claim, taking the definition of shifted quasi-conditional expectation into account.
\end{proof}

We next turn to measurability properties of shifted quasi-conditional expectation.

\begin{thm}
\label{lma:measurable}
If $\xi$ is $\mathcal{F}_r^X$-measurable, then $\xi \in D_r^{\cc}$ and $\widetilde{E}^{\cc}\left[\xi |\mathcal{F}_r^X\right] = \xi$. Conversely, let $\xi \in D_r^{\cc}$ such that $\widetilde{E}^{\cc}\left[\xi | \mathcal{F}_r^X\right] = \xi$. Then $\xi$ is $\mathcal{F}_r^X$-measurable.
\end{thm}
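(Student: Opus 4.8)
\emph{Proposed proof.} The plan is to reduce the statement to a test for $\mathcal{F}_r^X$-measurability phrased via the $S$-transform, and then obtain both implications from the definition of $D_r^{\cc}$ by a short coset computation with the operator $\I_r^{*}$. Write $\HH_{[0,r]}:=\overline{\mathrm{span}}\{\1_{(0,t]}:t\in[0,r]\}\subseteq\HH$ (so $I(\HH_{[0,r]})=\vr$) and let $\pi_r$ be the orthogonal projection of $\HH$ onto $\HH_{[0,r]}$. First I would prove the elementary Gaussian identity $\E[e^{\diamond I(\hh)}\mid\mathcal{F}_r^X]=e^{\diamond I(\pi_r\hh)}$: split $I(\hh)$ into its orthogonal projection onto $\vr$, which is $\mathcal{F}_r^X$-measurable, plus the remainder, which is Gaussian and uncorrelated with $\vr$ and hence independent of $\mathcal{F}_r^X$, and compute the Gaussian Laplace transform. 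Since $e^{\diamond I(\pi_r\hh)}$ is $\mathcal{F}_r^X$-measurable, this gives $(S\,\E[\xi\mid\mathcal{F}_r^X])(\hh)=\E[\xi\,e^{\diamond I(\pi_r\hh)}]=(S\xi)(\pi_r\hh)$ for every $\xi\in L^2_X$; invoking totality of the Wick exponentials, injectivity of the $S$-transform and the fact that $\xi$ is $\mathcal{F}_r^X$-measurable iff $\xi=\E[\xi\mid\mathcal{F}_r^X]$, I would conclude the \emph{test}: $\xi\in L^2_X$ is $\mathcal{F}_r^X$-measurable if and only if $(S\xi)(\hh)=(S\xi)(\pi_r\hh)$ for all $\hh\in\HH$, equivalently, if and only if $S\xi$ is constant on every coset of $\ker\pi_r=\HH_{[0,r]}^{\perp}$.

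Next I would record two facts about $\I_r$ and its $\HH$-adjoint $\I_r^{*}$ (well-defined and bounded, since $X$ has an indefinite Wiener integral). From $\I_r\1_{(0,t]}=\1_{(0,t\wedge r]}$ one reads off that $\overline{\mathrm{range}\,\I_r}=\HH_{[0,r]}$, hence $\ker\I_r^{*}=\HH_{[0,r]}^{\perp}$, and that $\I_r$ restricts to the identity on $\HH_{[0,r]}$, hence $(\mathrm{id}-\I_r^{*})\HH\subseteq\HH_{[0,r]}^{\perp}$ (because $\langle(\mathrm{id}-\I_r^{*})\hh,k\rangle_{\HH}=\langle\hh,(\mathrm{id}-\I_r)k\rangle_{\HH}=0$ for every $k\in\HH_{[0,r]}$). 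Recalling $\hh^{r}=\I_r^{*}\hh$, the map occurring in the definition of $D_r^{\cc}$ is $\hh\mapsto(\hh+\cc)^{r}-\cc=\I_r^{*}\hh+(\I_r^{*}-\mathrm{id})\cc$, and the two facts yield at once: (a) for every $\hh$, $\hh-\big((\hh+\cc)^{r}-\cc\big)=(\mathrm{id}-\I_r^{*})(\hh+\cc)\in\HH_{[0,r]}^{\perp}$; and (b) $(\hh_1+\cc)^{r}-\cc=(\hh_2+\cc)^{r}-\cc$ exactly when $\hh_1-\hh_2\in\HH_{[0,r]}^{\perp}$.

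Then I would combine these. For the converse implication, $\widetilde{E}^{\cc}[\xi\mid\mathcal{F}_r^X]=\xi$ reads $(S\xi)(\hh)=(S\xi)\big((\hh+\cc)^{r}-\cc\big)$ for all $\hh$; by (b) the right-hand side is constant on cosets of $\HH_{[0,r]}^{\perp}$, hence so is $S\xi$, and the test shows $\xi$ is $\mathcal{F}_r^X$-measurable. For the forward implication, if $\xi$ is $\mathcal{F}_r^X$-measurable then $\xi\in L^2_X$, and by the test together with (a) one gets $(S\xi)\big((\hh+\cc)^{r}-\cc\big)=(S\xi)(\hh)$ for all $\hh$ (the two arguments lie in the same $\HH_{[0,r]}^{\perp}$-coset); so $\eta^{\cc}_r:=\xi$ witnesses $\xi\in D_r^{\cc}$ with $\widetilde{E}^{\cc}[\xi\mid\mathcal{F}_r^X]=\xi$. (Alternatively, the forward direction follows from Example \ref{ex:simple}, which gives $\widetilde{E}^{\cc}[e^{\diamond f}\mid\mathcal{F}_r^X]=e^{\diamond f}$ for $f\in\vr$, combined with the closedness of $\widetilde{E}^{\cc}[\cdot\mid\mathcal{F}_r^X]$ from Theorem \ref{thm:quasi} and totality of $\{e^{\diamond f}:f\in\vr\}$ in $L^2(\Omega,\mathcal{F}_r^X,P)$.)

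The only step I expect to require genuine care is the first one: proving $\E[e^{\diamond I(\hh)}\mid\mathcal{F}_r^X]=e^{\diamond I(\pi_r\hh)}$ and packaging it as the $S$-transform characterisation of $\mathcal{F}_r^X$-measurability. After that the argument is purely formal, the conceptual point being that the ``correction'' $(\mathrm{id}-\I_r^{*})(\hh+\cc)$ — in particular the shift $(\I_r^{*}-\mathrm{id})\cc$ — always lies in $\HH_{[0,r]}^{\perp}$ and is therefore invisible to the $S$-transform of any $\mathcal{F}_r^X$-measurable random variable, whereas the relation $\widetilde{E}^{\cc}[\xi\mid\mathcal{F}_r^X]=\xi$ forces $S\xi$ to ignore precisely those directions.
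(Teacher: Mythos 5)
Your proof is correct. It reaches the same conclusion by a genuinely different organization than the paper. You first isolate a standalone $S$-transform test for $\mathcal{F}_r^X$-measurability — $(S\xi)(\hh)=(S\xi)(\pi_r\hh)$ for all $\hh$, i.e.\ constancy of $S\xi$ on cosets of $\HH_{[0,r]}^{\perp}$ — via the Gaussian identity $\E[e^{\diamond I(\hh)}\mid\mathcal{F}_r^X]=e^{\diamond I(\pi_r\hh)}$, after which both implications collapse to the two algebraic facts $\ker\I_r^{*}=\HH_{[0,r]}^{\perp}$ and $(\mathrm{id}-\I_r^{*})\HH\subseteq\HH_{[0,r]}^{\perp}$. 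The paper treats the two directions asymmetrically: the forward implication is obtained by approximating $\xi$ with linear combinations of Wick exponentials $e^{\diamond \fc}$, $\fc\in\mathrm{span}\{X_t:t\leq r\}$, and invoking Example \ref{ex:simple}, while the converse tests the identity $(S\xi)((\hh+\cc)^r-\cc)=(S\xi)(\hh)$ only at $\hh$ with $I(\hh)\in\vro$ (where $\hh^r=0$ by Remark \ref{rem:quasi_indefinite}) and then runs a totality argument for Wick exponentials in $L^2(\mathcal{G}_r^{\perp},P)$ to identify $\xi-\E[\xi\mid\mathcal{F}_r^X]$ with a constant. Your $\ker\I_r^{*}=\HH_{[0,r]}^{\perp}$ encodes exactly the same information as that remark, and the Gaussian orthogonality-implies-independence step appears in both arguments, just at different places. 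What your route buys is a single reusable measurability criterion, a symmetric one-line treatment of both directions, and the avoidance of both the approximation argument and the totality/decomposition step; what the paper's route buys is that it stays entirely within tools it has already set up (Example \ref{ex:simple}, Remark \ref{rem:quasi_indefinite}) without having to prove the conditional-expectation formula for Wick exponentials. One small bookkeeping point: your test implicitly assumes $\xi\in L^2_X$, which is indeed the standing convention here since $D_r^{\cc}\subset L^2_X$, and the continuity of $\I_r$ (hence boundedness of $\I_r^{*}$) is guaranteed by the standing assumption that $X$ has an indefinite Wiener integral — worth stating explicitly, but not a gap.
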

\begin{proof}
Suppose first that $\xi$ is $\mathcal{F}_r^X$-measurable. By Example \ref{ex:simple}
and (\ref{eq:quasi_first_chaos}) we observe that $\widetilde{E}^{\cc}\left[e^{\diamond \fc}| \mathcal{F}_r^X\right] = e^{\diamond \fc}$ 
for any $\fc$ of the form  $\fc = \sum_{k=1}^n \alpha_k X_{t_k}$ with $t_k\leq r$ for every $k$.
 Since $\xi$ is $\mathcal{F}_r^X$-measurable, there exists a sequence 
$\xi^{(n)} = \sum_{k=1}^n \alpha_k^{(n)}e^{\diamond \fc_k^{(n)}}$, where
$\fc_k^{(n)} \in \text{span}\left(X_t:t\in[0,r]\right)$, 
such that $\xi^{(n)} \to \xi$ in $L^2_X$. Hence for every $\hh\in \HH$ we get
\begin{eqnarray*}
\left(S\xi\right)((\hh+\cc)^r -\cc) &=& \lim_{n\to \infty} \left(S\xi^{(n)}\right)((\hh+\cc)^r -\cc)
=\lim_{n\to\infty} \left(S\xi^{(n)}\right)(\hh)\\
&=&\left(S\xi\right)(\hh).
\end{eqnarray*}
Conversely, suppose $\xi \in D_r^{\cc}$ such that $\widetilde{E}^{\cc}_r\left[\xi | \mathcal{F}_r^X\right] = \xi$. Then for any $\hh\in \HH$ we have
\begin{equation}
\label{eq:S_equal_case}
\left(S\xi\right)\left((\hh+\cc)^r - \cc\right) = \left(S\xi\right)(\hh).
\end{equation}
Let now $\hh\in \HH$ be such that $I(\hh) \in \vro$. 
By Remark \ref{rem:quasi_indefinite} we have $\hh^r = 0$. Hence we get 
$$
\left(S\xi\right)(\hh) = \left(S\xi\right)\left(\cc^r - \cc\right).
$$
Furthermore, by decomposing $\xi$ as 
$$
\xi = \E\left[\xi | \mathcal{F}_r^X\right] + \left[\xi -  \E\left[\xi | \mathcal{F}_r^X\right]\right]
$$
and using the fact $e^{\diamond I(\hh)}$ belongs to $L^2(\mathcal{G}^\perp_r,P)$, where the $\sigma$-field $\mathcal{G}^\perp_r$ is generated by the random variables in $\vro$, (and is, thus, orthogonal to $\E\left[\xi | \mathcal{F}_r^X\right]$), we get
\begin{eqnarray*}
\left(S\left(\xi -  \E\left[\xi | \mathcal{F}_r^X\right]\right)\right)(\hh) &=&\left(S\xi\right)(\hh) \\
&=& \left(S\xi\right)\left(\cc^r - \cc\right) = \E \left[\xi e^{\diamond I\left(\cc^r - \cc\right)}\right]\\
&=& \left(S\E \left[\xi e^{\diamond I\left(\cc^r - \cc\right)}\right]\right)(\hh).
\end{eqnarray*}
The Wick exponentials $\{e^{\diamond I(\hh)}, I(\hh) \in \vro\}$ form a total subset of 
$L^2(\mathcal{G}^\perp_r,P)$, and hence the $S$-transform determines
random variables in this space uniquely. In particular,
$$
\xi -  \E\left[\xi | \mathcal{F}_r^X\right] = \E \left[\xi e^{\diamond I\left(\cc^r - \cc\right)}\right],
$$
or equivalently
$$
\xi = \E\left[\xi | \mathcal{F}_r^X\right] + \E \left[\xi e^{\diamond I\left(\cc^r - \cc\right)}\right].
$$
Thus $\xi$ is $\mathcal{F}_r^X$-measurable.
\end{proof}
\begin{cor}
Let $\xi \in D^{\cc}_r$. Then $\widetilde{E}_r^{\cc}\left[\xi | \mathcal{F}_r^X\right]$ is 
$\mathcal{F}_r^X$-measurable.
\end{cor}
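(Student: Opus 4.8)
The plan is to derive this corollary as an almost immediate consequence of the towering property together with Theorem \ref{lma:measurable}. Fix $\xi \in D^{\cc}_r$ and set $\eta := \widetilde{E}^{\cc}_r[\xi|\mathcal{F}_r^X]$, which exists and lies in $L^2_X$ by assumption. The goal is to show $\eta \in D^{\cc}_r$ with $\widetilde{E}^{\cc}_r[\eta|\mathcal{F}_r^X] = \eta$, and then invoke the converse direction of Theorem \ref{lma:measurable} to conclude that $\eta$ is $\mathcal{F}_r^X$-measurable.

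First I would apply the towering property (the Proposition just above) with $r_1 = r_2 = r$. Strictly speaking that proposition is stated for $r_1 < r_2$, so the cleanest route is instead to argue directly from the definition of $D^{\cc}_r$ using the identity $\I_r \circ \I_r = \I_r$ (true for indicators, extended by continuity), hence $\I_r^* \circ \I_r^* = \I_r^*$ by duality, exactly as in the proof of the towering property. Concretely, for every $\hh \in \HH$,
\begin{equation*}
\left(S\eta\right)\left((\hh+\cc)^r - \cc\right) = \left(S\xi\right)\left(\left((\hh+\cc)^r\right)^r - \cc\right) = \left(S\xi\right)\left((\hh+\cc)^r - \cc\right) = \left(S\eta\right)(\hh),
\end{equation*}
where the first equality is the defining property of $\eta = \widetilde{E}^{\cc}_r[\xi|\mathcal{F}_r^X]$ applied to the shifted argument, the middle equality uses $\I_r^* \circ \I_r^* = \I_r^*$, and the last is again the definition of $\eta$. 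This shows $\eta \in D^{\cc}_r$ and $\widetilde{E}^{\cc}_r[\eta|\mathcal{F}_r^X] = \eta$.

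Finally I would apply the converse part of Theorem \ref{lma:measurable}: since $\eta \in D^{\cc}_r$ satisfies $\widetilde{E}^{\cc}_r[\eta|\mathcal{F}_r^X] = \eta$, it follows that $\eta = \widetilde{E}^{\cc}_r[\xi|\mathcal{F}_r^X]$ is $\mathcal{F}_r^X$-measurable, which is the claim. I do not anticipate any genuine obstacle here; the only point requiring a little care is that one should not quote the towering Proposition verbatim (it assumes $r_1 < r_2$) but rather reproduce its one-line idempotency argument, and one must make sure the defining relation for shifted quasi-conditional expectation is allowed to be tested against the element $(\hh+\cc)^r - \cc \in \HH$ in place of an arbitrary $\hh$, which is legitimate since that definition quantifies over all of $\HH$.
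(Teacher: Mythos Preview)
Your proof is correct and essentially identical to the paper's own argument: the paper also bypasses the towering proposition (which is stated only for $r_1<r_2$) and instead uses the idempotency $\I_r^*\circ\I_r^*=\I_r^*$ to show directly that $S(\eta)((\hh+\cc)^r-\cc)=S(\xi)((\hh+\cc)^r-\cc)$, concludes $\eta\in D_r^{\cc}$ with $\widetilde{E}^{\cc}_r[\eta|\mathcal{F}_r^X]=\eta$, and then invokes the converse direction of Theorem~\ref{lma:measurable}. Your cautionary remarks about not quoting the towering property verbatim and about the defining relation being quantified over all of $\HH$ are exactly the right observations.
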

\begin{proof}
Recalling that $\I^*_r\circ\I^*_r=\I^*_r$, we get, for every $\hh\in \HH$,
$$
S\left( \widetilde{E}_r^{\cc}\left[\xi | \mathcal{F}_r^X\right]\right)\left((\hh+\cc)^{r} - \cc\right)=
S(\xi)\left(\left((\hh+\cc)^{r}\right)^r - \cc\right)=S(\xi)\left((\hh+\cc)^{r} - \cc\right).
$$
Hence, $\widetilde{E}_r^{\cc}\left[\xi | \mathcal{F}_r^X\right]\in D_r^{\cc}$ and
$$
\widetilde{E}_r^{\cc}\left[\xi | \mathcal{F}_r^X\right]=\widetilde{E}_r^{\cc}\left[\widetilde{E}_r^{\cc}\left[\xi | \mathcal{F}_r^X\right]| \mathcal{F}_r^X\right].
$$
The assertion now is a direct consequence of Theorem \ref{lma:measurable}.
\end{proof}

We now explain how to compute shifted quasi-conditional expectations of some generalized Skorokhod integrals.
\begin{pro}
\label{lma:removing_Z_part}
Suppose $X$ has an indefinite Wiener integral, $\mathcal{H}$ is  appropriate for $X$ satisfying (I$_{\mathcal{H}}$),
and the generalized Skorokhod
integral $\int_t^a Z \ud^{\diamond_{\mathcal{H}}}X$ exists for some $Z\in L^2(\Omega,\mathcal{H})$ and some  $0\leq t\leq a\leq T$.
Moreover, assume that  $\cinth \in CM$ is such that $\mathcal{H}$ is a space of integrands for $\cinth$. Then, for every $v\leq t$,
$$
\widetilde{E}^{\cc}\left[\left.\int_t^a Z\ud^{\diamond_{\mathcal{H}}}X + \int_t^a Z \ud^{\mathcal{H}} \cinth \right| \mathcal{F}^X_v\right] = 0.
$$
\end{pro}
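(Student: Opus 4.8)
The plan is to show directly that $W:=\int_t^a Z\,\ud^{\diamond_{\mathcal H}}X+\int_t^a Z\,\ud^{\mathcal H}\cinth$ belongs to $D_v^{\cc}$ with $\widetilde{E}^{\cc}\left[W\,|\,\mathcal{F}_v^X\right]=0$. Note first that $W\in L^2_X$: the diamond integral exists by hypothesis, and the $\cinth$-integral is $(\mathrm{id}_{L^2_X}\otimes\ell_{\cinth})$ applied to $\I^{\mathcal H}_{(t,a]}(Z)\in L^2(\Omega,\mathcal H)$ for the bounded functional $\ell_{\cinth}=\int_0^T\cdot\;\ud^{\mathcal H}\cinth$ on $\mathcal H$. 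By the definition of $D_v^{\cc}$ and of $\cc$-shifted quasi-conditional expectation, and since $(S0)\equiv 0$, the claim is equivalent to $(SW)\bigl((\hh+\cc)^v-\cc\bigr)=0$ for all $\hh\in\HH$. I would reduce this to $\hh\in\Cset_{\mathcal H}$: the map $\hh\mapsto(\hh+\cc)^v-\cc=\I_v^*(\hh+\cc)-\cc$ is affine and continuous on $\HH$, and $\mathfrak f\mapsto(SW)(\mathfrak f)=\E[W e^{\diamond I(\mathfrak f)}]$ is continuous on $\HH$ (via $\|W\|_2\|e^{\diamond I(\mathfrak f_n)}-e^{\diamond I(\mathfrak f)}\|_2$ and $\|e^{\diamond g_n}-e^{\diamond g}\|^2_{L^2_X}=e^{\E g_n^2}-2e^{\E[g_ng]}+e^{\E g^2}\to 0$ when $g_n\to g$ in $H_X$), so density of $\Cset_{\mathcal H}$ in $\HH$ makes verification on $\Cset_{\mathcal H}$ sufficient.

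Next I would record two bookkeeping facts about the localization operators. First, $\Cset_{\mathcal H}$ is a linear subspace of $\HH$ (condition (ii) in the definition of a space of integrands is stable under linear combinations), so $\cc\in\Cset_{\mathcal H}$ gives $\hh+\cc\in\Cset_{\mathcal H}$ whenever $\hh\in\Cset_{\mathcal H}$. Second — the crucial one — for every $r\in[0,T]$ and every $\mathfrak f\in\Cset_{\mathcal H}$ one has $\mathfrak f^r=\I_r^*\mathfrak f\in\Cset_{\mathcal H}$, together with the change-of-integrator identity
$$\int_0^T g\,\ud^{\mathcal H}\underline{\mathfrak f^r}=\int_0^T \I^{\mathcal H}_r g\,\ud^{\mathcal H}\underline{\mathfrak f}\qquad(g\in\mathcal H),$$
where $\underline{\mathfrak f}$ is the Cameron--Martin function of $I(\mathfrak f)$. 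This follows by evaluating both functionals on the total family $\{\1_{(0,s]}\}$, using $\langle\1_{(0,s]},\I_r^*\mathfrak f\rangle_{\HH}=\langle\1_{(0,s\wedge r]},\mathfrak f\rangle_{\HH}$ and $\I^{\mathcal H}_r\1_{(0,s]}=\1_{(0,s\wedge r]}$, and extending by continuity of $\I^{\mathcal H}_r$, available by (I$_{\mathcal{H}}$). In particular, for $\hh\in\Cset_{\mathcal H}$ both $(\hh+\cc)^v$ and $\hh':=(\hh+\cc)^v-\cc$ lie in $\Cset_{\mathcal H}$, so every integral below is legitimate.

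The computation is then short. Unfolding the definitions, $\int_t^a Z\,\ud^{\diamond_{\mathcal H}}X=\int_0^T\I^{\mathcal H}_{(t,a]}(Z)\,\ud^{\diamond_{\mathcal H}}X$ and $\int_t^a Z\,\ud^{\mathcal H}\cinth=\int_0^T\I^{\mathcal H}_{(t,a]}(Z)\,\ud^{\mathcal H}\cinth$ with $\I^{\mathcal H}_{(t,a]}=\mathrm{id}_{L^2_X}\otimes(\I^{\mathcal H}_a-\I^{\mathcal H}_t)$; since this operator acts only on the $\mathcal H$-coordinate it commutes with the extended $S$-transform at $\hh'$. Applying (\ref{eq:abstract_integral_def}) to the diamond part and the elementary identity $S\bigl((\mathrm{id}\otimes\ell)Y\bigr)(\hh')=\ell\bigl((SY)(\hh')\bigr)$ to the $\cinth$-part, and adding (the two integration functionals summing to the one with integrator $\underline{\hh'}+\cinth$, which is the Cameron--Martin function of $I(\hh'+\cc)=I\bigl((\hh+\cc)^v\bigr)$), yields
$$\left(SW\right)(\hh')=\int_0^T(\I^{\mathcal H}_a-\I^{\mathcal H}_t)(SZ)(\hh')\,\ud^{\mathcal H}\underline{(\hh+\cc)^v}.$$
Invoking the change-of-integrator identity with $r=v$ and $\mathfrak f=\hh+\cc$ rewrites this as $\int_0^T\I^{\mathcal H}_v(\I^{\mathcal H}_a-\I^{\mathcal H}_t)(SZ)(\hh')\,\ud^{\mathcal H}\underline{(\hh+\cc)}$. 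Finally $\I^{\mathcal H}_v\I^{\mathcal H}_a=\I^{\mathcal H}_{v\wedge a}$ and $\I^{\mathcal H}_v\I^{\mathcal H}_t=\I^{\mathcal H}_{v\wedge t}$ (again by checking on $\{\1_{(0,s]}\}$ and extending), both equal to $\I^{\mathcal H}_v$ since $v\le t\le a$, so $\I^{\mathcal H}_v(\I^{\mathcal H}_a-\I^{\mathcal H}_t)=0$ and $(SW)(\hh')=0$; by the reduction this holds for all $\hh\in\HH$, which is the assertion.

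The only genuinely non-mechanical step is the change-of-integrator identity $\int_0^T\,\cdot\;\ud^{\mathcal H}\underline{\mathfrak f^r}=\int_0^T\I^{\mathcal H}_r(\cdot)\,\ud^{\mathcal H}\underline{\mathfrak f}$ together with the stability $\I_r^*\Cset_{\mathcal H}\subseteq\Cset_{\mathcal H}$, which formalizes the principle that one-sided localization of the integrand corresponds to localization of the Cameron--Martin integrator; everything else (commutation of $\I^{\mathcal H}_{(t,a]}$ with the $S$-transform, the semigroup relations $\I^{\mathcal H}_v\I^{\mathcal H}_a=\I^{\mathcal H}_{v\wedge a}$, and the density reduction) amounts to checking equalities on indicators and passing to limits.
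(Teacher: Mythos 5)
Your proof is correct, and it shares the paper's set-up but finishes by a genuinely different computation. The common part is the reduction, by density of $\Cset_{\mathcal H}$ and continuity of $\I_v^*$ and of $\hh\mapsto e^{\diamond I(\hh)}$, to test functions $\hh\in\Cset_{\mathcal H}$, together with the change-of-integrator identity $\int_0^T {\bf g}\,\ud^{\mathcal H}\,\inth^v=\int_0^T\I^{\mathcal H}_v({\bf g})\,\ud^{\mathcal H}\inth$ and the stability $(\hh+\cc)^v-\cc\in\Cset_{\mathcal H}$; you prove these exactly as the paper does, by checking on indicators and extending by continuity. From there the paper specializes to simple integrands $Z=\sum_k e^{\diamond \fc_k}\1_{(a_k,T]}$, computes the $S$-transform of the resulting integral explicitly via (\ref{eq:simple_integral_S_transform}), observes that each term carries the factor $\la \I_v\1_{(a_k\vee t,a]},\hh+\cc\ra_{\HH}=0$ because $v\leq t$, and then passes to general $Z$ by approximation through (\ref{eq:S_transform_general_integral_convergence}). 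You instead keep $Z$ general throughout: you commute $\I^{\mathcal H}_{(t,a]}$ past the extended $S$-transform, merge the two deterministic integration functionals into the single one with integrator $\underline{(\hh+\cc)^v}$ (using linearity of the Cameron--Martin identification), apply the change-of-integrator identity once more, and annihilate the integrand via $\I^{\mathcal H}_v\left(\I^{\mathcal H}_a-\I^{\mathcal H}_t\right)=\I^{\mathcal H}_{v\wedge a}-\I^{\mathcal H}_{v\wedge t}=0$ for $v\leq t\leq a$. This buys a proof with no approximation of the integrand at all (only the density reduction in the test function remains), at the modest cost of the extra operator identities $\I^{\mathcal H}_v\I^{\mathcal H}_a=\I^{\mathcal H}_{v\wedge a}$ and $S\left((\mathrm{id}\otimes\ell)Y\right)(\hh')=\ell\left((SY)(\hh')\right)$, both of which again reduce to indicators; the paper's route avoids these but pays with the limiting argument. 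Both versions encode the same mechanism: localizing the integrand to $(t,a]$ is killed by localizing the integrator to $[0,v]$.
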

\begin{proof}
Denote 
$$
\xi = \int_t^a Z\ud^{\diamond_{\mathcal{H}}}X + \int_t^a Z \ud^{\mathcal{H}} \cinth.
$$
As $\Cset_{\mathcal{H}}$ is dense in $\HH_X$, $\I^*_v$ is continuous, and the map $\hh\mapsto e^{\diamond I(\hh)}$ is continuous, it suffices to show
$$
S(\xi)((\hh+\cc)^v - \cc )=0
$$
for every $\hh\in \Cset_{\mathcal{H}}$. For $\hh\in \Cset_{\mathcal{H}}$, write $h^v=I(\hh^v)$. Then, it is easy to check, that
$\hh \in  \Cset_{\mathcal{H}}$ implies $\hh^v \in \Cset_{\mathcal{H}}$ and, for ${\bf g}\in \mathcal{H}$,
$$
\int_0^T {\bf g} \,\ud^{\mathcal{H}} {\inth}^v= \int_0^T \I^{\mathcal{H}}_v({\bf g}) \,\ud^{\mathcal{H}} {\inth}.
$$
Indeed, this follows from
\begin{eqnarray*}
 \int_0^T {\bf 1}_{(0,t]} \,\ud^{\mathcal{H}} {\inth}^v&=& {\inth}^v(t)=\langle {\bf 1}_{(0,t]}, \hh^v\rangle_{\HH}= \langle {\bf 1}_{(0,t\wedge v]}, \hh\rangle_{\HH}
  = {\inth}(t\wedge v)\\ &=& \int_0^T  \I^{\mathcal{H}}_v\left({\bf 1}_{(0,t]}\right)\,\ud^{\mathcal{H}} {\inth},\quad 0\leq t \leq T,
\end{eqnarray*}
in conjunction with the continuity of $\I^{\mathcal{H}}_v$ and $\int_0^T   \cdot \,\ud^{\mathcal{H}} {\inth}$. 
In particular, we observe that $\hh^{\cc}_v := (\hh+\cc)^v - \cc \in \Cset_{\mathcal{H}}$ for $\hh\in \Cset_{\mathcal{H}}$, 
because $\cc \in\Cset_{\mathcal{H}}$ by assumption.

Now, let us first assume that $Z$ is a simple integrand of the form 
$Z=\sum_{k=1}^ne^{\diamond \fc_k} \textbf{1}_{(a_k,T]} \in L^2(\Omega,\mathcal{H})$ for some numbers $a_k\in[0,T]$ and $\fc_k \in H_X$. We have
$$
\I^{\mathcal{H}}_{(t,a]} (Z) = \sum_{k=1}^n e^{\diamond \fc_k} \1_{(a_k \vee t,a]}
$$
and consequently, taking $S$-transform of $\xi$ at $\hh^{\cc}_v$ yields, 
for $\hh\in \Cset_{\mathcal{H}}$, by \eqref{eq:S_transform_first_chaos2} and \eqref{eq:simple_integral_S_transform}, 
\begin{eqnarray*}
\left(S\xi\right)\left(\hh^{\cc}_v\right)&=&\sum_{k=1}^n \left(Se^{\diamond \fc_k}\right)\left(\hh^{\cc}_v\right)\left(S\left(X_a-X_{a_k\vee t}\right)\right)\left(\hh^{\cc}_v\right)\\
&&+\sum_{k=1}^n \left(Se^{\diamond \fc_k}\right)\left(\hh^{\cc}_v\right)\left(S\left(X_a-X_{a_k\vee t}\right)\right)\left(\cc\right)\\
&=&\sum_{k=1}^n \left(Se^{\diamond \fc_k}\right)\left(\hh^{\cc}_v\right)\left(S\left(X_a-X_{a_k\vee t}\right)\right)\left(\left(\hh+\cc\right)^v\right) = 0
\end{eqnarray*}
thanks to the fact that 
$$
\left(S\left(X_a-X_{a_k\vee t}\right)\right)\left(\left(\hh+\cc\right)^v\right) = \la \textbf{1}_{(a_k\vee t,a]},(\hh+\cc)^v\ra_{\HH} 
= \la \I^{\mathcal{\HH}}_v\1_{(a_k\vee t,a]},\hh+\cc\ra_{\HH} = 0
$$ 
since $v\leq t$. The general case $Z \in L^2(\Omega,\mathcal{H})$ now follows directly by approximating with simple integrands
of form $Z_n = \sum_{k=1}^ne^{\diamond \fc_k^{(n)}} \textbf{1}_{(a_k^{(n)},T]}$ converging to $Z$ in $L^2(\Omega,\mathcal{H})$ together with equation \eqref{eq:S_transform_general_integral_convergence}. 
\end{proof}

\begin{rem}
Suppose, under the assumptions of Proposition \ref{lma:removing_Z_part},
that
$\int_0^t Z \ud^{\mathcal{H}}X$ exists for every $t\in[0,T]$. We also assume that $Z$ is \emph{quasi-adapted} in the sense that there is a sequence 
of $(\mathcal{F}^X_t)_{t\in [0,T]}$-adapted processes $(Z^n)$ in $\mathcal{E}$,
which converges to $Z$ in $L^2(\Omega,\mathcal{H})$.
 Then, thanks to the quasi-adaptedness of $Z$, 
$$
\widetilde{E}^{\cc}\left[\left.\int_0^s Z\ud^{\diamond_{\mathcal{H}}}X + \int_0^s Z \ud^{\mathcal{H}} \cinth \right| \mathcal{F}^X_s\right] = \int_0^s Z\ud^{\diamond_{\mathcal{H}}}X + \int_0^s Z \ud^{\mathcal{H}} \cinth,\quad 0\leq s\leq T.
$$
Indeed, this equation holds with $Z$ replaced by $Z^n$ by the definiton of the Skorokhod integral for simple integrands in (\ref{eq:simple_integral}) and Theorem \ref{lma:measurable}. It then carries over 
to $Z$ by the limiting argument in (\ref{eq:S_transform_general_integral_convergence}). 
Hence, in view of Proposition \ref{lma:removing_Z_part}, we obtain, for $0\leq s\leq t\leq T$, by linearity of the shifted 
quasi-conditional expectation,
$$
\widetilde{E}^{\cc}\left[\left.\int_0^t Z\ud^{\diamond_{\mathcal{H}}}X + \int_0^t Z \ud^{\mathcal{H}} \cinth \right| \mathcal{F}^X_s\right] = \int_0^s Z\ud^{\diamond_{\mathcal{H}}}X + \int_0^s Z \ud^{\mathcal{H}} \cinth.
$$
This property is analogous to the martingale property of It\^o integrals with drift under a change of measure induced by a Cameron-Martin shift
in the Brownian motion case. It has already been observed for fractional Brownian motion with $H>1/2$ in the non-shifted case by \cite{hu-oksendal-sulem}. Note that in our context we cannot apply the classical notion of adaptedness, 
because $Z$ need not be a stochastic process in time.
\end{rem}

We have shown that quasi-conditional expectation operator 
shares many properties with the classical conditional expectation operator. 
The next result reveals that, as an important difference, Jensen's inequality does not hold for nonmartingales. This result 
turns out to be one important building block of our non-existence theorem, cp. the construction in Example \ref{exa:not_in_domain}
below.

\begin{thm}
\label{thm:norm_increase}
Suppose $X$ has an indefinite Wiener integral. Then the following are equivalent;
\begin{enumerate}
\item $X$ is not a martingale,
\item There are $\fc \in H_X$ and  $r\in[0,T]$ such that $\E \left[\widetilde{E}\left[\fc | \mathcal{F}_r^X\right]^2\right] > \E \left[\fc^2\right]$.
\end{enumerate}
\end{thm}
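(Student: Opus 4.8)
The plan is to prove the two implications separately, leaning on the geometric description of quasi-conditional expectation on the first chaos provided by Theorem \ref{thm:quasi_equivalent}: since $X$ has an indefinite Wiener integral, the spaces $\vr$ and $\vT$ are complementary, and $\widetilde{E}\left[\fc\mid\mathcal{F}_r^X\right]=\fc_{\vr}$ is the (in general oblique) projection of $\fc=\fc_{\vr}+\fc_{\vT}$ onto $\vr$ along $\vT$; equivalently, by Remark \ref{rem:quasi_indefinite}, statement (ii) asserts that $\Vert\I_r\Vert_{op}>1$ for some $r$. So the whole theorem amounts to: the oblique projections associated with the decomposition $H_X=\vr\oplus\vT$ all have norm $\le 1$ precisely when $X$ is a martingale.

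For the direction (ii)$\Rightarrow$(i) I argue by contraposition. If $X$ is a martingale (with respect to its own filtration), then for $u\le r\le v$ we have $\E[X_u(X_v-X_r)]=\E\bigl[X_u\,\E[X_v-X_r\mid\mathcal{F}_r^X]\bigr]=0$, so $\vr\perp\vT$; hence the decomposition $\fc=\fc_{\vr}+\fc_{\vT}$ is orthogonal and Pythagoras gives $\E[\widetilde{E}[\fc\mid\mathcal{F}_r^X]^2]=\E[\fc_{\vr}^2]\le\E[\fc^2]$ for every $\fc\in H_X$ and every $r$, i.e.\ (ii) fails. (Alternatively one may quote Theorem \ref{thm:martingale_change_of_measure} together with Remark \ref{rem:quasi_indefinite}: in the martingale case $\I_r$ is multiplication by $\1_{(0,r]}$ on $L^2(\ud V)$, hence a contraction.)

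For (i)$\Rightarrow$(ii) the first step is to show that a non-martingale $X$ has $\vr\not\perp\vT$ for some $r$; again by contraposition, if $\vr\perp\vT$ for all $r\in[0,T]$ then taking $u=r=s$, $v=t$ in $\E[X_u(X_v-X_r)]=0$ gives $R(s,t)=V(s)$ for $s\le t$, so $R(s,t)=V(s\wedge t)$ everywhere, whence $\E[(X_t-X_s)X_u]=0$ for $u\le s\le t$ and, by joint Gaussianity, $\E[X_t-X_s\mid\mathcal{F}_s^X]=0$, contradicting that $X$ is not a martingale. The second and crucial step is the construction of the witness. Fix such an $r$, pick $g\in\vr$, $h\in\vT$ with $\E[gh]\ne0$, and after replacing $h$ by $-h$ assume $\E[gh]<0$ (so $g,h\ne0$). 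The naive choice $\fc=g-h$ need not work (since $|\E[gh]|$ may be tiny); instead one amplifies the $\vr$-component, setting $\fc=\lambda g+h$ for a scalar $\lambda>0$. Its unique decomposition has $\fc_{\vr}=\lambda g$, so $\widetilde{E}[\fc\mid\mathcal{F}_r^X]=\lambda g$ by \eqref{eq:quasi_first_chaos}, and
\[
\E\bigl[\widetilde{E}[\fc\mid\mathcal{F}_r^X]^2\bigr]-\E[\fc^2]=-\bigl(2\lambda\,\E[gh]+\E[h^2]\bigr),
\]
which is strictly positive as soon as $\lambda>\E[h^2]/(2|\E[gh]|)$, giving (ii). The main obstacle is precisely this last realization — that mere non-orthogonality of the complementary spaces $\vr,\vT$ suffices, provided one scales up the component in the range rather than working with a fixed $\fc$; everything else is bookkeeping with the decomposition of Theorem \ref{thm:quasi_equivalent} and elementary covariance identities.
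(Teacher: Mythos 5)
Your proof is correct, and while it rests on the same two pillars as the paper's --- the reduction of (i) to non-orthogonality of $\vr$ and $\vT$ for some $r$, and the identification of $\widetilde{E}[\,\cdot\,|\mathcal{F}_r^X]$ on the first chaos with the oblique projection from Theorem \ref{thm:quasi_equivalent} --- the construction of the witness in (i) $\Rightarrow$ (ii) is genuinely different. The paper introduces the ``angle'' $d_r=\sup\{\E[\Psi\Upsilon]:\Psi\in\vr,\ \Upsilon\in\vT,\ \|\Psi\|=\|\Upsilon\|=1\}$, chooses near-extremal unit vectors with $\E[\Upsilon\Psi]\geq d_r-\epsilon$, sets $\fc=\Upsilon-d_r\Psi$, and concludes by letting $\epsilon\to0$ in the ratio $(1+d_r^2-2d_r\E[\Upsilon\Psi])^{-1}$. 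You instead take \emph{any} non-orthogonal pair $g\in\vr$, $h\in\vT$ with $\E[gh]<0$ and amplify the $\vr$-component, $\fc=\lambda g+h$; the identity $\E[\fc_{\vr}^2]-\E[\fc^2]=-(2\lambda\E[gh]+\E[h^2])$ then yields the strict inequality for $\lambda$ large. Your variant is more elementary: it avoids the supremum, the $\epsilon$-limit, and any implicit discussion of whether $d_r<1$, at the cost of not quantifying how far the operator norm exceeds $1$ (the paper's argument shows $\|\I_r\|_{op}\geq(1-d_r^2)^{-1/2}$, which is a slightly stronger piece of information, though unused elsewhere). Your (ii) $\Rightarrow$ (i) via Pythagoras for the orthogonal decomposition in the martingale case is also a more self-contained alternative to the paper's appeal to Theorem \ref{thm:martingale_change_of_measure} and Jensen's inequality. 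Both directions of your argument are complete as written.
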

\begin{proof}
\textbf{(ii)} $\Rightarrow$ \textbf{(i)}: We prove the contraposition. Hence, suppose $X$ is a martingale. 
Then, thanks to Theorem \ref{thm:martingale_change_of_measure}, we have $\widetilde{E}\left[\fc | \mathcal{F}_r^X\right] = \E\left[\fc | \mathcal{F}_r^X\right]$, and 
classical conditional expectation satisfies, of course, Jensen's inequality.
\\ 
\textbf{(i)} $\Rightarrow$ \textbf{(ii)}: Denote by $d_r$ the number
$$
d_r  := \sup_{\Psi\in \vr, \Upsilon\in \vT, \Vert \Psi\Vert = \Vert \Upsilon\Vert=1} \E\left[\Psi\Upsilon\right]
$$
and note that, since $X$ is not a martingale, we have $d_r>0$ for some $r\in [0,T]$. Indeed, if $d_r = 0$ for all $r\in[0,T]$, then 
the increments of $X$ are uncorrelated and, thus, independent by Gaussianity of $X$. 
Hence $X$ is a martingale which would lead to a contradiction. For fixed $\epsilon>0$ small,
choose some $\Upsilon \in \vr$ and $\Psi \in \vT$ such that 
$\E \left[\Upsilon^2\right] = \E\left[\Psi^2\right] = 1$ and $\E\left[\Upsilon\Psi\right] \geq d_r -\epsilon$. 
Define also $\fc = \Upsilon - d_r \Psi$ and note that, thanks to Theorem \ref{thm:quasi_equivalent} (ii), $\fc\neq 0$, i.e.,
$
\E\left[\fc^2\right] = 1 + d_r^2 - 2d_r \E\left[\Upsilon\Psi\right]>0.
$
By (\ref{eq:quasi_first_chaos})  
we, moreover, have
$
\widetilde{E}\left[\fc|\mathcal{F}_r^X\right] = \Upsilon.
$
Hence,
\begin{equation*}
\begin{split}
\E\left[\widetilde{E}\left[\fc|\mathcal{F}_r^X\right]^2\right]&= 1  \\
&= \frac{1}{1+d_r^2 - 2d_r\E\left[\Upsilon\Psi\right]}\E[\fc^2] \\
&\geq \frac{1}{1-d_r^2 + 2d_r\epsilon}\E[\fc^2].
\end{split}
\end{equation*}
Recalling that $d_r>0$, we get $(1-d_r^2 + 2d_r\epsilon)^{-1}\rightarrow (1-d_r^2)^{-1}>1$ as $\epsilon$ goes to zero. Hence, for sufficiently 
small $\epsilon>0$, 
$$
\E\left[\widetilde{E}\left[\fc|\mathcal{F}_r^X\right]^2\right]>\E[\fc^2].
$$
\end{proof}

We now proceed to give a characterisation of the domain $D^{\cc}_r$ 
in terms of the chaos decomposition, which we recall first.
 For $q\geq 1$, the $q$th Wiener chaos of $X$ is 
defined as the closed linear subspace of $L^2_X$
generated by the family $\{H_{q}(I(\hh)) : \hh\in  \HH,\left\| \hh\right\| _{ \HH}=1\}$, 
where $H_{q}$ is the $q$th Hermite polynomial. The mapping $I_{q}(\hh^{\otimes q})=H_{q}(I(\hh))$ can be extended to a
linear isometry between the symmetric tensor product $ \HH^{\tilde\otimes q}$
and the $q$th Wiener chaos, and for any $\hh \in  \HH^{\tilde\otimes q}$ the random variable $I_q(\hh)$ is called a multiple Wiener integral of order $q$. It is known that each element $\xi \in L^2_X$ has a unique chaos decomposition
\begin{equation}
\label{eq:chaos_decomposition}
\xi = \sum_{k=0}^\infty I_k(\ff_k), \quad \ff_k \in \HH^{\tilde\otimes k},
\end{equation}
where $I_0$ denotes the identity on $H_0=\mathbb{R}$ and $\ff_0=\E[\xi]$. For more details, we refer to Janson \cite{janson} and Nualart \cite{nu}.
For handy reference we just note the chaos decomposition of a Wick exponential
\begin{equation}\label{eq:chaos_exponential}
e^{\diamond I(\hh)} = 1 +\sum_{k=1}^\infty I_k\left(\frac{1}{k!}\hh^{\otimes k}\right),\quad \hh\in\HH_X,
\end{equation}
and the isometry property
\begin{equation}\label{eq:chaos_isometry}
 \E\left[\left(\sum_{k=0}^\infty I_k(\ff_k) \right)\left(\sum_{k=0}^\infty I_k(\hh_k)\right)\right]=\sum_{k=0}^\infty
 k! \langle \ff_k, \hh_k \rangle_{\HH_X^{\otimes k}}.
\end{equation}

 We begin with the following technical lemma.
\begin{lma}
\label{lma:convergence}
Suppose $X$ has an indefinite Wiener integral, 
let $\ff_k \in \HH^{\tilde \otimes k}$ be a sequence such that $\sum_{k=0}^\infty k!\Vert \ff_k\Vert_{\HH^{\otimes k}}^2 < \infty$, and $\cc \in \HH$ be given. Denote 
$\cc_r = \Tr^*_r \cc-\cc \in \HH$, and for given $i\leq k$ set 
\begin{equation}
\label{eq:f_map_smaller}
\CC_{r,k,i} \ff_k = \la \ff_k, \cc_r^{\otimes k-i}\ra_{\HH^{\otimes k-i}}.
\end{equation}
Then the series
\begin{equation}
\label{eq:new_coefficient}
\widetilde{\ff}_n^{r,\cc} = \sum_{k=n}^\infty {k \choose n} \Tr_r^{\otimes n}\CC_{r,k,n} \ff_k
\end{equation}
converges in $\HH^{\tilde \otimes n}$. 
\end{lma}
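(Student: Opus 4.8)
The plan is to show that the series defining $\widetilde{\ff}_n^{r,\cc}$ converges absolutely in $\HH^{\tilde\otimes n}$; since $\HH^{\tilde\otimes n}$ is a closed subspace of the Hilbert space $\HH^{\otimes n}$, hence complete, absolute convergence yields convergence. I would begin by fixing two finite constants. Since $X$ has an indefinite Wiener integral, i.e. $\HH_X$ satisfies \textbf{(I$_{\HH_X}$)}, the operator $\I_r:\HH\to\HH$ is bounded; set $M_r:=\Vert\I_r\Vert_{op}=\Vert\I_r^*\Vert_{op}<\infty$. Set also $\rho:=\Vert\cc_r\Vert_{\HH}=\Vert\I_r^*\cc-\cc\Vert_{\HH}\le(M_r+1)\Vert\cc\Vert_{\HH}<\infty$. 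The next ingredient is the standard norm estimate for partial contractions: since $\ff_k\in\HH^{\tilde\otimes k}$ is symmetric, $\CC_{r,k,n}\ff_k=\la\ff_k,\cc_r^{\otimes k-n}\ra_{\HH^{\otimes k-n}}$ is well defined (the choice of contracted coordinates is immaterial), lies again in the symmetric tensor product $\HH^{\tilde\otimes n}$, and satisfies $\Vert\CC_{r,k,n}\ff_k\Vert_{\HH^{\otimes n}}\le\rho^{\,k-n}\Vert\ff_k\Vert_{\HH^{\otimes k}}$ by Cauchy--Schwarz. Since $\I_r^{\otimes n}$ commutes with coordinate permutations, it maps $\HH^{\tilde\otimes n}$ into itself with operator norm at most $M_r^n$. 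Combining these observations, each summand lies in $\HH^{\tilde\otimes n}$ and obeys
\[
\Bigl\Vert \binom{k}{n}\,\I_r^{\otimes n}\CC_{r,k,n}\ff_k\Bigr\Vert_{\HH^{\otimes n}}\;\le\;\binom{k}{n}M_r^n\,\rho^{\,k-n}\,\Vert\ff_k\Vert_{\HH^{\otimes k}}.
\]

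The core of the argument is then to verify that $\sum_{k=n}^\infty\binom{k}{n}\rho^{\,k-n}\Vert\ff_k\Vert_{\HH^{\otimes k}}<\infty$; this is exactly the point where the chaos weighting $\sqrt{k!}$ must be used to absorb the combinatorial factor. I would write $\binom{k}{n}\rho^{\,k-n}\Vert\ff_k\Vert=\bigl(\binom{k}{n}\rho^{\,k-n}/\sqrt{k!}\,\bigr)\cdot\bigl(\sqrt{k!}\,\Vert\ff_k\Vert\bigr)$ and apply the Cauchy--Schwarz inequality in $\ell^2$, bounding the sum by
\[
\Bigl(\sum_{k=n}^\infty\frac{\binom{k}{n}^2\rho^{\,2(k-n)}}{k!}\Bigr)^{1/2}\Bigl(\sum_{k=n}^\infty k!\,\Vert\ff_k\Vert_{\HH^{\otimes k}}^2\Bigr)^{1/2}.
\]
The second factor is finite by hypothesis. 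For the first, substituting $j=k-n$ and using $\binom{k}{n}^2/k!=(j+n)!/\bigl((n!)^2(j!)^2\bigr)\le(j+n)^n/\bigl((n!)^2\,j!\bigr)$ reduces matters to the convergence of $\sum_{j\ge0}(j+n)^n\rho^{\,2j}/j!$, which is immediate from the ratio test for fixed $n$. Hence the numerical series of norms converges, so the partial sums $\sum_{k=n}^N\binom{k}{n}\I_r^{\otimes n}\CC_{r,k,n}\ff_k$ form a Cauchy sequence in $\HH^{\tilde\otimes n}$, and their limit, which again belongs to $\HH^{\tilde\otimes n}$ by closedness, is the asserted element $\widetilde{\ff}_n^{r,\cc}$.

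All steps are routine and there is no genuine obstacle; the lemma is essentially a bookkeeping estimate. The only points deserving care are that the factorial growth $\sqrt{k!}$ must be paired against $\binom{k}{n}$ via Cauchy--Schwarz (a crude geometric bound would not suffice, since $\binom{k}{n}\rho^{\,k-n}$ alone is not summable against generic $\ff_k$), and that symmetry is preserved under both the contraction $\CC_{r,k,n}$ and the operator $\I_r^{\otimes n}$, so that the limit genuinely lands in the symmetric tensor product $\HH^{\tilde\otimes n}$ rather than merely in $\HH^{\otimes n}$.
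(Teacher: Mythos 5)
Your proof is correct and follows essentially the same route as the paper's: the tensor-power operator-norm bound $\Vert \Tr_r^{\otimes n}\CC_{r,k,n}\ff_k\Vert \le \Vert\Tr_r\Vert_{op}^{\,n}\Vert\cc_r\Vert^{k-n}\Vert\ff_k\Vert$ followed by a Cauchy--Schwarz splitting of $\binom{k}{n}\Vert\cc_r\Vert^{k-n}\Vert\ff_k\Vert$ against the weight $\sqrt{k!}$, reducing everything to the convergence of $\sum_j (j+n)!\,\Vert\cc_r\Vert^{2j}/(j!)^2$. The only difference is cosmetic: you verify that auxiliary series by the ratio test, where the paper simply asserts its convergence.
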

\begin{proof}
Standard estimates for tensor powers of operators (see, e.g., Proposition E.20 in \cite{janson}) yield
\begin{equation}
\label{eq:T_bound}
\Vert \Tr_r^{\otimes n}\CC_{r,k,n} \ff_k\Vert_{\HH^{\otimes n}} \leq \Vert \Tr_r\Vert_{op}^n \Vert \cc_r\Vert_{\HH}^{k-n} \Vert \ff_k\Vert_{\HH^{\otimes k}},
\end{equation}
where we recall that $\Vert \cdot \Vert_{op}$ stands for the operator norm.
Let now $n$ be fixed. For every $m>l+n>n$ we get
\begin{equation*}
\begin{split}
\frac{1}{\sqrt n}\Vert\sum_{k=n+l}^m {k \choose n} \Tr_r^{\otimes n}\CC_{r,k,n} \ff_k\Vert_{\HH^{\tilde\otimes n}} 
& =\Vert\sum_{k=n+l}^m {k \choose n} \Tr_r^{\otimes n}\CC_{r,k,n} \ff_k\Vert_{\HH^{\otimes n}} \\ & \leq \Vert \Tr_r\Vert_{op}^n 
\sum_{k=n+l}^m{k \choose n}\Vert \cc_r\Vert_{\HH}^{k-n} \Vert \ff_k\Vert_{\HH^{\otimes k}}\\
&= \frac{\Vert \Tr_r\Vert_{op}^n}{n!} \sum_{k=n+l}^m \sqrt{k!}\Vert \ff_k\Vert_{\HH^{\otimes k}} \frac{\sqrt{k!}}{(k-n)!}\Vert \cc_r\Vert_{\HH}^{k-n}\\
&\leq \frac{\Vert \Tr_r\Vert_{op}^n}{n!} \sqrt{\sum_{k=n+l}^m k!\Vert \ff_k\Vert^2_{\HH^{\otimes k}}}\sqrt{\sum_{k=n+l}^m \frac{k!}{(k-n)!^2}\Vert \cc_r\Vert_{\HH}^{2(k-n)}}
\end{split}
\end{equation*}
which shows that $\sum_{k=n}^m {k \choose n} \Tr_r^{\otimes n}\CC_{r,k,n} \ff_k$ is a Cauchy-sequence in $\HH^{\tilde\otimes n}$, since $\sum_{k=n}^\infty k!\Vert \ff_k\Vert^2_{\HH^{\otimes k}}$ converges 
by assumption and 
$$
\sum_{k=n}^\infty \frac{k!}{(k-n)!^2}\Vert \cc_r\Vert_{\HH}^{2(k-n)} = \sum_{k=0}^\infty \frac{(k+n)!}{k!^2}\Vert \cc_r\Vert_{\HH}^{2k} < \infty.
$$
\end{proof}
\begin{thm}[Characterisation of $D_r^{\cc}$]\label{thm:domain}
\label{cor:characterisation}
Suppose $X$ has an indefinite Wiener integral. Then, for every $r\in[0,T]$, the domain $D_r^{\cc}$ of the operator $\widetilde{E}^{\cc}\left[\cdot |\mathcal{F}_r^X\right]$ is given by
\begin{equation}
\label{eq:Ec_domain}
D_r^{\cc} =\left\{\xi = \sum_{k=0}^\infty I_k(\ff_k) : \sum_{k=0}^\infty k! \Vert \widetilde{\ff}^{r,\cc}_{k}\Vert^2_{\HH^{\otimes k}} < \infty\right\},
\end{equation}
where $\widetilde{\ff}^{r,\cc}_k$ are given by \eqref{eq:new_coefficient}. 
Furthermore, for $\xi \in D_r^{\cc}$ with chaos decomposition $\xi = \sum_{k=0}^\infty I_k(\ff_k)$ we have
\begin{equation}
\label{eq:quasi_chaos_c}
\widetilde{E}^{\cc}\left[\xi |\mathcal{F}_r^X\right] = \sum_{k=0}^\infty I_k\left(\widetilde{\ff}^{r,\cc}_k\right).
\end{equation}
\end{thm}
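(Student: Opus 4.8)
The plan is to compute the quantity $(S\xi)\big((\hh+\cc)^r-\cc\big)$ occurring in the definition of $D_r^{\cc}$ directly in terms of the chaos coefficients of a generic element $\xi=\sum_{k=0}^\infty I_k(\ff_k)\in L^2_X$, and then to recognise the answer as the $S$-transform of $\sum_{k=0}^\infty I_k(\widetilde{\ff}^{r,\cc}_k)$, whose coefficients are well defined thanks to Lemma \ref{lma:convergence}. The basic tool is the identity
\begin{equation*}
(S\eta)(\psi)=\sum_{k=0}^\infty \la \gg_k,\psi^{\otimes k}\ra_{\HH^{\otimes k}},\qquad \eta=\sum_{k=0}^\infty I_k(\gg_k)\in L^2_X,\quad \psi\in\HH,
\end{equation*}
which follows by inserting the chaos expansion \eqref{eq:chaos_exponential} of $e^{\diamond I(\psi)}$ into $(S\eta)(\psi)=\E[\eta\,e^{\diamond I(\psi)}]$ and applying the isometry \eqref{eq:chaos_isometry}; the series converges absolutely by Cauchy--Schwarz, since $\sum_k k!\|\gg_k\|^2_{\HH^{\otimes k}}=\E[\eta^2]<\infty$.

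Specialising this identity to $\psi=(\hh+\cc)^r-\cc=\hh^r+\cc_r$, with $\hh^r=\Tr^*_r\hh$ and $\cc_r=\Tr^*_r\cc-\cc$ as in Lemma \ref{lma:convergence}, and using the symmetry of $\ff_k$ when expanding the tensor power, I would obtain for every $\hh\in\HH$
\begin{equation*}
\la \ff_k,(\hh^r+\cc_r)^{\otimes k}\ra_{\HH^{\otimes k}}=\sum_{i=0}^k \binom{k}{i}\la \ff_k,(\hh^r)^{\otimes i}\otimes \cc_r^{\otimes(k-i)}\ra_{\HH^{\otimes k}}=\sum_{i=0}^k \binom{k}{i}\la \Tr_r^{\otimes i}\CC_{r,k,i}\ff_k,\hh^{\otimes i}\ra_{\HH^{\otimes i}},
\end{equation*}
where in the second equality one first contracts the $\cc_r$-slots to produce $\CC_{r,k,i}\ff_k\in\HH^{\tilde\otimes i}$ and then uses $(\hh^r)^{\otimes i}=(\Tr^*_r)^{\otimes i}\hh^{\otimes i}$ together with the adjointness of $\Tr_r$. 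Summing over $k\geq 0$ and interchanging the order of the $k$- and $i$-summations --- which is legitimate because estimate \eqref{eq:T_bound} and Cauchy--Schwarz furnish the absolutely summable majorant $\sum_k\|\ff_k\|_{\HH^{\otimes k}}\big(\|\Tr_r\|_{op}\|\hh\|_{\HH}+\|\cc_r\|_{\HH}\big)^k<\infty$ --- I arrive at
\begin{equation*}
(S\xi)\big((\hh+\cc)^r-\cc\big)=\sum_{i=0}^\infty \la \widetilde{\ff}^{r,\cc}_i,\hh^{\otimes i}\ra_{\HH^{\otimes i}},\qquad \hh\in\HH,
\end{equation*}
where $\widetilde{\ff}^{r,\cc}_i\in\HH^{\tilde\otimes i}$ is precisely the element defined in \eqref{eq:new_coefficient}, its convergence being the content of Lemma \ref{lma:convergence}.

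From this identity, valid for every $\xi\in L^2_X$, both inclusions in \eqref{eq:Ec_domain} follow. If $\sum_i i!\|\widetilde{\ff}^{r,\cc}_i\|^2_{\HH^{\otimes i}}<\infty$, then $\eta:=\sum_i I_i(\widetilde{\ff}^{r,\cc}_i)$ is a well-defined element of $L^2_X$ satisfying $(S\eta)(\hh)=(S\xi)\big((\hh+\cc)^r-\cc\big)$ for all $\hh\in\HH$, so by the definition of $D_r^{\cc}$ we have $\xi\in D_r^{\cc}$ and $\widetilde{E}^{\cc}[\xi|\mathcal{F}_r^X]=\eta$; this yields the inclusion ``$\supseteq$'' and formula \eqref{eq:quasi_chaos_c}. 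Conversely, if $\xi\in D_r^{\cc}$ and $\widetilde{E}^{\cc}[\xi|\mathcal{F}_r^X]=\sum_i I_i(\gg_i)$, then $\sum_i\la \gg_i,\hh^{\otimes i}\ra=\sum_i\la \widetilde{\ff}^{r,\cc}_i,\hh^{\otimes i}\ra$ for every $\hh\in\HH$; replacing $\hh$ by $t\hh$ makes both sides power series in $t\in\R$ that are absolutely convergent for every $t$ (by the same majorant as above), so their Taylor coefficients must agree, i.e.\ $\la \gg_i,\hh^{\otimes i}\ra=\la \widetilde{\ff}^{r,\cc}_i,\hh^{\otimes i}\ra$ for all $i$ and all $\hh$, and polarisation gives $\gg_i=\widetilde{\ff}^{r,\cc}_i$ for every $i$. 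Hence $\sum_i i!\|\widetilde{\ff}^{r,\cc}_i\|^2_{\HH^{\otimes i}}=\sum_i i!\|\gg_i\|^2_{\HH^{\otimes i}}=\E\big[\widetilde{E}^{\cc}[\xi|\mathcal{F}_r^X]^2\big]<\infty$, so $\xi$ belongs to the right-hand side of \eqref{eq:Ec_domain}.

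The main obstacle I anticipate is the computation in the second paragraph: handling the contractions over symmetric tensor products cleanly, and in particular justifying the interchange of the $k$- and $i$-summations. For the latter I would rely directly on the operator-norm estimate \eqref{eq:T_bound} established inside the proof of Lemma \ref{lma:convergence}, which makes the majorant $\sum_k\|\ff_k\|_{\HH^{\otimes k}}\big(\|\Tr_r\|_{op}\|\hh\|_{\HH}+\|\cc_r\|_{\HH}\big)^k$ transparent. The coefficient-matching and polarisation in the last paragraph are routine, once one notes that all the power series in $t$ that appear have infinite radius of convergence.
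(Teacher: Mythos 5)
Your proof is correct and follows essentially the same route as the paper: the same expansion of $\la \ff_k,(\Tr_r^*\hh+\cc_r)^{\otimes k}\ra_{\HH^{\otimes k}}$ via the binomial formula and the same interchange of summations justified by \eqref{eq:T_bound}, followed by the same power-series coefficient matching for the reverse inclusion. The only (harmless) difference is organisational: for the inclusion ``$\supseteq$'' the paper first treats finite chaos truncations $\xi_n=\sum_{k=0}^n I_k(\ff_k)$ and invokes closedness of $\widetilde{E}^{\cc}[\cdot|\mathcal{F}_r^X]$, whereas you verify the defining $S$-transform identity directly from the master formula, which is slightly more economical.
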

\begin{proof}
Denote again 
$\cc_r = \Tr_r^* \cc-\cc \in \HH$, and 
$(\hh+\cc)^r = \Tr_r^*(\hh+\cc) = \Tr_r^*\hh + \Tr_r^*\cc$. 
Then, for a random variable of form 
$I_q(\ff)$, $\ff \in \HH^{\tilde \otimes q}$, we have, by (\ref{eq:chaos_exponential}) and (\ref{eq:chaos_isometry}), 
\begin{eqnarray*}
&& \left(SI_q(\ff)\right)((\hh+\cc)^r-\cc) = \la \ff, \left(\Tr_r^*\hh+\cc_r\right)^{\otimes q}\ra_{\HH^{\otimes q}}\\
&=& \sum_{k=0}^q {q\choose k}\la \ff,\left(\Tr_r^* \hh\right)^{\otimes k} \otimes \left(\cc_r\right)^{\otimes q-k}\ra_{\HH^{\otimes q}}
=  \sum_{k=0}^q {q\choose k} \la \CC_{r,q,k}\ff,\left(\Tr_r^* \hh\right)^{\otimes k}\ra_{\HH^{\otimes k}}\\
&=&  \sum_{k=0}^q {q\choose k} \la \Tr_r^{\otimes k}\CC_{r,q,k}\ff,\hh^{\otimes k}\ra_{\HH^{\otimes k}}
= \sum_{k=0}^q {q\choose k}  \E\left[I_k(\Tr_r^{\otimes k}\CC_{r,q,k}\ff)e^{\diamond I(\hh)}\right]\\
&=& S\left(\sum_{k=0}^q {q\choose k}I_k(\Tr_r^{\otimes k}\CC_{r,q,k}\ff)\right)(\hh).
\end{eqnarray*}
Hence, $I_q(\ff)\in D^\cc_r$ and 
\begin{equation}\label{eq:qc_fixedchaos}
\widetilde{E}^{\cc}\left[I_q(\ff)|\mathcal{F}_r^X\right] = \sum_{k=0}^q {q\choose k}I_k(\Tr_r^{\otimes k}\CC_{r,q,k}\ff).
\end{equation}
Denote now
$
\xi_n = \sum_{k=0}^n I_k(\ff_k).
$
Then $\xi_n$ converges to $\xi$ in $L^2$. Furthermore, $\xi_n \in D^\cc_r$ 
and
$$
\widetilde{E}^{\cc}\left[\xi_n |\mathcal{F}_r^X\right] = \sum_{j=0}^n I_j\left(\sum_{k=j}^n {k \choose j} \Tr_r^{\otimes j}\CC_{r,k,j}\ff_k\right)
$$
by (\ref{eq:qc_fixedchaos}) and linearity. 
Consequently, if 
\begin{equation}
\label{eq:quasi_chaos_cond}
\sum_{k=0}^\infty k! \Vert \widetilde{\ff}^{r,\cc}_{k}\Vert^2_{\HH^{\otimes k}} < \infty
\end{equation}
where $\widetilde{\ff}^{r,\cc}_k$ is given by \eqref{eq:new_coefficient}, 
then $\widetilde{E}^{\cc}\left[\xi_n |\mathcal{F}_r^X\right]$ converges in $L^2_X$ to 
$
 \sum_{k=0}^\infty I_k\left(\widetilde{\ff}^{r,\cc}_k\right)
$
from which $\xi \in D^\cc_r$ and \eqref{eq:quasi_chaos_c} 
follows by closedness of $\widetilde{E}^{\cc}\left[\cdot |\mathcal{F}_r^X\right]$.

Assume now that $\xi \in D_r^{\cc}$ with chaos decomposition 
$
\xi = \sum_{k=0}^\infty I_k(\ff_k),
$
and let $\hh \in \HH$. Applying the computations above we get
$$
\left(S \xi \right)((\hh+\cc)^r-\cc) =\sum_{k=0}^\infty\la \ff_k, \left(\Tr^*_r\hh+\cc_r\right)^{\otimes k}\ra_{\HH^{\otimes k}}
= \sum_{k=0}^\infty \sum_{j=0}^k {k \choose j} \la \Tr_r^{\otimes j}\CC_{r,k,j}\ff_{k}, \hh^{\otimes j}\ra_{\HH^{\otimes j}}.
$$
By \eqref{eq:T_bound} we obtain 
\begin{equation*}
\begin{split}
& \sum_{k=0}^\infty \sum_{j=0}^k \left|{k \choose j} \la \Tr_r^{\otimes j}\CC_{r,k,j}\ff_{k}, \hh^{\otimes j}\ra_{\HH^{\otimes j}}\right|\\
&\leq \sum_{k=0}^\infty \Vert \ff_k\Vert_{\HH^{\otimes k}} \sum_{j=0}^k {k\choose j} \Vert \hh\Vert_{\HH}^j \Vert \Tr_r\Vert_{op}^j \Vert \cc_r\Vert_{\HH}^{k-j}\\
&= \sum_{k=0}^\infty \Vert \ff_k\Vert_{\HH^{\otimes k}} \left(\Vert \hh\Vert_{\HH} \Vert \Tr_r\Vert_{op} + \Vert \cc_r\Vert_{\HH} \right)^k \\
&\leq \sqrt{\sum_{k=0}^\infty k! \Vert \ff_k\Vert_{\HH^{\otimes k}}^2} \sqrt{\sum_{k=0}^\infty \frac{1}{k!}\left(\Vert \hh\Vert_{\HH} \Vert \Tr_r\Vert_{op} + \Vert \cc_r\Vert_{\HH} 
\right)^{2k}}\\
& < \infty.
\end{split}
\end{equation*}
Hence we can change the order of summation, and consequently we get
\begin{equation*}
\left(S \xi \right)((\hh+\cc)^r-\cc) = \sum_{j=0}^\infty \sum_{k=j}^\infty {k \choose j} \la \Tr_r^{\otimes j}\CC_{r,k,j}\ff_{k}, \hh^{\otimes j}\ra_{\HH^{\otimes j}} = \sum_{j=0}^\infty \la \widetilde{\ff}_j^{r,\cc},\hh^{\otimes j}\ra_{\HH^{\otimes j}}
\end{equation*}
by Lemma \ref{lma:convergence}. 
On the other hand, let the chaos decomposition of $\widetilde{E}^{\cc}\left[\xi |\mathcal{F}_r^X\right]$ be given by
$$
\widetilde{E}^{\cc}\left[\xi |\mathcal{F}_r^X\right] = \sum_{k=0}^\infty I_k\left(\widehat{\ff}_k\right).
$$
We have
\begin{equation*}
\left(S\widetilde{E}\left[\xi |\mathcal{F}_r^X\right]\right)(\hh) = \sum_{k=0}^\infty \la \widehat{\ff}_k,\hh^{\otimes k}\ra_{\HH^{\otimes k}}.
\end{equation*}
In particular, for $ \hh \in \HH$ and $\alpha\in \mathbb{R}$, we get
\begin{equation}
\label{eq:chaos_dec_direct}
\left(S \xi \right)((\alpha\hh+\cc)^r-\cc) = \sum_{k=0}^\infty \alpha^k \la \widetilde{\ff}_k^{r,\cc},\hh^{\otimes k}\ra_{\HH^{\otimes k}}
\end{equation}
and
\begin{equation}
\label{eq:chaos_dec_quasi}
\left(S\widetilde{E}\left[\xi |\mathcal{F}_r^X\right]\right)(\alpha\hh) = \sum_{k=0}^\infty \alpha^k\la \widehat{\ff}_k,\hh^{\otimes k}\ra_{\HH^{\otimes k}}
\end{equation}
Comparing power series \eqref{eq:chaos_dec_direct} and \eqref{eq:chaos_dec_quasi} we obtain
$\la \widetilde{\ff}_k^{r,\cc},\hh^{\otimes k}\ra_{\HH^{\otimes k}} = \la \widehat{\ff}_k,\hh^{\otimes k}\ra_{\HH^{\otimes k}}$ 
for every $k\geq 0$ and every $\hh \in \HH$. This implies $\widehat{\ff}_k = \widetilde{\ff}^{r,\cc}_k, k\geq 0$ since 
$\left\{\hh^{\otimes k}: \hh \in \HH\right\}$ is total in $\HH^{\tilde\otimes k}$. Consequently, by (\ref{eq:chaos_isometry}),
$$
\sum_{k=0}^\infty k! \Vert \widetilde{\ff}^{r,\cc}_{k}\Vert^2_{\HH^{\otimes k}} = \sum_{k=0}^\infty k! \Vert \widehat{\ff}_{k}\Vert^2_{\HH^{\otimes k}}=\E\left[\widetilde{E}^{\cc}\left[\xi |\mathcal{F}_r^X\right]^2 \right]<\infty.  
$$
\end{proof}
\begin{rem}
\label{rem:non-shifted}
For the non-shifted operator $\widetilde{E}\left[\cdot|\mathcal{F}_r^X\right]$ we have $\widetilde{\ff}_n^{r,0} = \Tr_r^{\otimes n}\ff_n$ so that
\begin{equation}
\label{eq:E_domain}
D_r =\left\{\xi = \sum_{k=0}^\infty I_k(\ff_k) : \sum_{k=0}^\infty k!\Vert \Tr_r^{\otimes k}\ff_k\Vert^2_{\HH^{\otimes k}} < \infty\right\},
\end{equation}
and for $\xi \in D_r$ with chaos decomposition $\xi = \sum_{k=0}^\infty I_k(\ff_k)$ we have
\begin{equation}
\label{eq:quasi_chaos}
\widetilde{E}\left[\xi |\mathcal{F}_r^X\right] = \sum_{k=0}^\infty I_k\left(\Tr^{\otimes k}_r\ff_k\right).
\end{equation}
This shows, that even in the special case, where $X$ is a fractional Brownian motion with Hurst parameter $H>1/2$, our definition of
quasi-conditional expectation
extends the one by \cite{hu-oksendal}, cp. also \cite{bender-elliott} and \cite{hu-peng}.
\end{rem}

We can now continue with a construction of a random variable which does not belong to the domain of shifted quasi-conditional 
expectation in the non-martingale case.

\begin{exa}
\label{exa:not_in_domain}
Let $\cc \in \HH$ be fixed and suppose $X$ is not a martingale. Then by Theorem \ref{thm:norm_increase}
and Remark \ref{rem:quasi_indefinite}, $\Vert \I_r\Vert_{op} >1$ for some $r\in [0,T]$, so that there 
exists $\ff\in \HH$ and $r\in[0,T]$ such that $\Vert \Tr_r \ff\Vert_{\HH} > 1 > \Vert \ff\Vert_{\HH}$. Without loss of generality
(by changing to $-\ff$ if necessary) we can also assume $\la \ff,\cc_r\ra_{\HH} \geq 0$. 
Defining $\xi = \sum_{k=0}^\infty I_k\left(\frac{1}{\sqrt{k!}} \ff^{\otimes k}\right)$ we have, in view of (\ref{eq:chaos_isometry}), $\xi \in L^2_X$, while 
using the fact $\la \ff,\cc_r\ra_{\HH} \geq 0$ we get 
\begin{equation*}
\begin{split}
\sum_{k=0}^\infty k! \Vert \widetilde{\ff}^{r,\cc}_k\Vert^2_{\HH^{\otimes k}} &= 
\sum_{k=0}^\infty k! \Vert \Tr_r\ff \Vert_{\HH}^{2k}\left(\sum_{j=k}^\infty {j \choose k} \frac{1}{\sqrt{j!}} \la \ff,\cc_r\ra_{\HH}^{j-k} \right)^2\\
&\geq \sum_{k=0}^\infty \Vert \Tr_r\ff \Vert_{\HH}^{2k} \\
&= \infty.
\end{split}
\end{equation*}
Hence $\xi \notin D_r^{\cc}$.
\end{exa}

\begin{rem}
 The construction in the above example is analogous to that in Theorem 5.2 in \cite{bender-elliott} for the case of fractional Brownian
 motion with Hurst parameter $H>1/2$ and $\cc=0$. Instead of building on an abstract existence argument for $\ff$ with  
 $\Vert \Tr_r \ff\Vert_{\HH} > 1 > \Vert \ff\Vert_{\HH}$, the authors in \cite{bender-elliott} give a simple explicit example of such $\ff$ in the fractional Brownian motion case.
\end{rem}

In the following theorem we have summarised our main results concerning quasi-conditional 
expectation and its domain.
\begin{thm}
\label{thm:mg_equivalence}
Suppose $X$ has an indefinite Wiener integral. Then the following are equivalent:
\begin{enumerate}
\item $X$ is a martingale,
\item for all $r\in[0,T]$ we have $\Vert \I_r\Vert_{op} = 1$, where $\Vert \cdot \Vert_{op}$ denotes the operator norm,
\item for all $r\in[0,T]$, $\widetilde{E}\left[\cdot | \mathcal{F}_r\right]$ is the restriction of the classical conditional
expectation $\E\left[\cdot | \mathcal{F}_r\right]$ to $L^2_X$,
\item for all $r\in[0,T]$ we have $D_r = L^2_X$.
\end{enumerate}
\end{thm}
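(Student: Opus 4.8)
The plan is to establish the cycle (i) $\Rightarrow$ (iii) $\Rightarrow$ (iv) $\Rightarrow$ (i) together with the equivalence (i) $\Leftrightarrow$ (ii); since everything then reduces to assembling results already proved in this section, I do not expect any genuinely new computation. For (i) $\Rightarrow$ (iii) I would simply invoke Theorem \ref{thm:martingale_change_of_measure} with $\cc=0$: when $X$ is a martingale it gives $D_r=L^2_X$ and $\widetilde{E}[\xi|\mathcal{F}_r^X]=\E[\xi|\mathcal{F}_r^X]$ for every $\xi\in L^2_X$ and every $r$, which is exactly (iii) (and already (iv) as well). The step (iii) $\Rightarrow$ (iv) is then immediate, since the classical conditional expectation $\E[\cdot|\mathcal{F}_r^X]$ is a bounded operator defined on all of $L^2_X$, so identifying $\widetilde{E}[\cdot|\mathcal{F}_r^X]$ with its restriction to $L^2_X$ forces $D_r=L^2_X$. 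Finally, for (iv) $\Rightarrow$ (i) I would argue by contraposition: if $X$ is not a martingale, Example \ref{exa:not_in_domain} with $\cc=0$ produces, for some $r\in[0,T]$, a random variable $\xi\in L^2_X$ with $\xi\notin D_r$, so $D_r\subsetneq L^2_X$ and (iv) fails.

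To couple in (ii) I would use the operator-norm identity from Remark \ref{rem:quasi_indefinite}, namely that $\|\I_r\|_{op}$ equals the operator norm of $\widetilde{E}[\cdot|\mathcal{F}_r^X]$ restricted to the first chaos $H_X$. If $X$ is a martingale, then by the already-established (iii) this restriction is ordinary conditional expectation, hence an $L^2$-contraction, so $\|\I_r\|_{op}\leq 1$; the matching lower bound is clear because $\I_r$ leaves every $\1_{(0,t]}$ with $t\leq r$ invariant, and such an indicator is nonzero in $\HH_X$ whenever $\E[X_t^2]>0$ (the only exceptions, where $X$ has vanishing variance on $[0,r]$, are trivial and there $\|\I_r\|_{op}=0$, so the assertion is to be read with $\leq$). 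Conversely, if $X$ is not a martingale, Theorem \ref{thm:norm_increase} yields $\fc\in H_X$ and $r\in[0,T]$ with $\E[\widetilde{E}[\fc|\mathcal{F}_r^X]^2]>\E[\fc^2]$, i.e.\ the restriction of $\widetilde{E}[\cdot|\mathcal{F}_r^X]$ to $H_X$ has operator norm exceeding one, whence $\|\I_r\|_{op}>1$ for that $r$ by Remark \ref{rem:quasi_indefinite}, and (ii) fails. This closes the chain.

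The proof therefore carries no serious obstacle; all the substance sits in Theorems \ref{thm:martingale_change_of_measure} and \ref{thm:norm_increase}, in Example \ref{exa:not_in_domain}, and in the operator-norm identity of Remark \ref{rem:quasi_indefinite}. The two points where I would be careful are the precise reading of (iii) (so that (iii) $\Rightarrow$ (iv) really is automatic) and the harmless degeneracies just mentioned in (ii). As an alternative to the route (iii) $\Rightarrow$ (iv) $\Rightarrow$ (i), one can obtain (iii) $\Rightarrow$ (i) even more directly: combining (iii) with \eqref{eq:quasi_first_chaos} gives $\E[X_t|\mathcal{F}_r^X]=\widetilde{E}[X_t|\mathcal{F}_r^X]=X_{t\wedge r}$ for all $t,r$, which is the martingale property.
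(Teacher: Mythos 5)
Your proof is correct and follows essentially the same route as the paper: (i) $\Leftrightarrow$ (ii) via Remark \ref{rem:quasi_indefinite} and Theorem \ref{thm:norm_increase}, (i) $\Rightarrow$ (iii) via Theorem \ref{thm:martingale_change_of_measure}, (iii) $\Rightarrow$ (iv) trivially, and (iv) $\Rightarrow$ (i) by contraposition through Example \ref{exa:not_in_domain}. Your extra remark about the degenerate case (e.g.\ $r=0$, where $\Vert \I_r\Vert_{op}=0$ rather than $1$) is a legitimate small caveat that the paper's statement of (ii) glosses over, but it does not affect the substance of the equivalence.
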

\begin{proof}
Using Remark \ref{rem:quasi_indefinite} the equivalence \textbf{(i)} $\Leftrightarrow$ \textbf{(ii)} is just a
reformulation of Theorem \ref{thm:norm_increase} in terms of operator norm $\Vert \I_r\Vert_{op}$. Furthermore,
implication \textbf{(i)} $\Rightarrow$ \textbf{(iii)} follows from Theorem
\ref{thm:martingale_change_of_measure} and implication \textbf{(iii)} $\Rightarrow$ \textbf{(iv)} is obvious.
Finally, Example \ref{exa:not_in_domain} provides the implication 
\textbf{(iv)} $\Rightarrow$ \textbf{(i)}. 
\end{proof}

We end this section by proving our main results Theorem \ref{thm:mg_domain_connection} and Theorem   \ref{thm:solution_Y}.
\begin{proof}[Proof of Theorem \ref{thm:mg_domain_connection}]
\textbf{(ii)} $\Rightarrow$ \textbf{(i)}: Suppose $X$ is a martingale. Then the choice $c=0$ leads to contradiction with the help of Theorem \ref{thm:mg_equivalence}.

\textbf{(i)} $\Rightarrow$ \textbf{(ii)}: Let $\cc \in \HH$ be fixed and suppose $X$ is not a martingale. Then Example \ref{exa:not_in_domain} provides an element $\xi \in L^2_X$ such that $\xi \notin D_r^{\cc}$.
\end{proof}

\begin{proof}[Proof of Theorem \ref{thm:solution_Y}]
Suppose $(Y,Z,\mathcal{H})$ is a mild solution to (\ref{BSDE_differentialform}).   By the definition of shifted quasi-conditional expectation we have to show that
\begin{equation}\label{eq:hilf0001}
\left(S \tilde{\xi}\right)(\left(\hh+\cc\right)^t - \cc) = A(t)\left(SY_t\right)(\hh) - \int_0^t A(u)\left(SG_u\right)(\hh)\ud\gamma(u)
\end{equation}
for every $t\in [0,T]$ and $\hh \in \HH_X$. We here recall that
$$
\tilde{\xi} = \xi - \int_0^T A(s)G_s\ud\gamma(s),\quad A(t) = e^{\int_t^T a(s) \ud \gamma(s)}.
$$
Let $v\in[0,T]$ and $\hh\in \HH_X$ be fixed. Taking $S$-transform at $\hh^{\cc}_v=(\hh+\cc)^v - \cc $ in \eqref{eq:BSDE_abstract},
using linearity together with Proposition \ref{lma:removing_Z_part} 
yields, for $t\in[v,T]$,
\begin{equation}
\label{eq:S_Y}
\left(SY_t\right)\left(\hh^{\cc}_v\right) = 
 \left(S \xi\right)\left(\hh^{\cc}_v\right) 
- \int_t^T \left[a(u)\left(SY_u\right)\left(\hh^{\cc}_v\right) + \left(SG_u\right)
\left(\hh^{\cc}_v\right)\right]\ud \gamma(u).
\end{equation}
Since $\gamma$ is a continuous function of bounded variation, so is $A(t)$. Moreover, for fixed $v$ the function $t\mapsto \left(SY_t\right)\left(\hh^{\cc}_v\right)$ is continuous and of bounded variation on $[v,T]$
by \eqref{eq:S_Y}. Hence
we can use integration by parts to get, for $t\in[v,T]$,
\begin{equation*}
A(t)\left(SY_t\right)\left(\hh^{\cc}_v\right)= A(T)\left(SY_T\right)\left(\hh^{\cc}_v\right) - 
\int_t^T A(u)\ud \left(SY_u\right)\left(\hh^{\cc}_v\right) 
- \int_t^T \left(SY_u\right)\left(\hh^{\cc}_v\right)\ud A(u)
\end{equation*}
from which, using $\ud A(u) = -a(u)A(u)\ud \gamma(u)$ and \eqref{eq:S_Y} again, we obtain 
\begin{equation*}
\begin{split}
 A(t)\left(SY_t\right)\left(\hh^{\cc}_v\right) &=  A(T)\left(SY_T\right)\left(\hh^{\cc}_v\right) - \int_t^T A(u)\left[a(u)\left(SY_u\right)\left(\hh^{\cc}_v\right) + 
\left(SG_u\right)\left(\hh^{\cc}_v\right) \right]\ud \gamma(u)\\
&\quad+ \int_t^T \left(SY_u\right)\left(\hh^{\cc}_v\right)a(u)A(u)\ud \gamma(u)\\
&=  A(T)\left(SY_T\right)\left(\hh^{\cc}_v\right) - \int_t^T A(u)\left(SG_u\right)\left(\hh^{\cc}_v\right)\ud \gamma(u)\\
&= \left(S\tilde\xi\right)\left(\hh^{\cc}_v\right) + \int_0^t A(u)\left(SG_u\right)\left(\hh^{\cc}_v\right)\ud \gamma(u).
\end{split}
\end{equation*}
In particular, taking $t=v$ yields 
\begin{equation*}
A(t)\left(SY_t\right)\left(\hh^{\cc}_t\right) 
= \left(S\tilde\xi\right)\left(\hh^{\cc}_t\right) 
+ \int_0^t A(u)\left(SG_u\right)\left(\hh^{\cc}_t\right)\ud \gamma(u),\quad t\in [0,T].
\end{equation*}
Since $Y$ and $G$ are adapted, we have $\left(SY_t\right)(\left(\hh+\cc\right)^t - \cc) = \left(SY_t\right)(\hh)$ 
and $\left(SG_u\right)(\left(\hh+\cc\right)^t - \cc) = \left(SG_u\right)(\hh)$ for $u\leq t$ by Theorem \ref{lma:measurable},
which finally implies (\ref{eq:hilf0001}). 
\end{proof}

\section{On the existence of an indefinite Wiener integral}\label{sec:indefinite}
\label{subsec:eQC}
In this section, we show, among others, that fractional Brownian motion has an indefinite Wiener integral for the full range of Hurst 
parameters $H\in(0,1)$.

We begin with the following characterisation which shows how the existence of an indefinite Wiener integral can be checked
in terms of the Cameron-Martin space of $X$.
\begin{thm}
\label{thm:quasi_equivalent_to_check}
The following statements are equivalent;
\begin{enumerate}
\item $X$ has an indefinite Wiener integral,
\item The Cameron-Martin space of $X$ is closed against truncation in time, i.e., for every
$\underline{h} \in CM_X$ and $r\in[0,T]$, the mapping $\underline{h}^r :t \mapsto \underline{h}(t\wedge r)$ satisfies $\underline{h}^r\in CM_X$.
\end{enumerate}
\end{thm}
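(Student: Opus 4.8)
The plan is to exploit the three canonical isometries already set up in Section 2: the Wiener integral $I:\HH_X\to H_X$, and the isometry between $H_X$ and $CM_X$ given by $\fc\mapsto \inth=\E[X_\cdot\fc]$. Under these identifications, the composite map $\HH_X\to CM_X$ sends $\hh$ to $\inth$, where by \eqref{eq:S_transform_first_chaos2} we have $\inth=S(I(\hh))=\E[X_\cdot I(\hh)]$. The key observation is that truncation in time is intertwined by these isometries: if $\hh\in\HH_X$ has $\I_r\hh$ well-defined, then the Wiener integral $I(\I_r\hh)$ has, by \eqref{eq:quasi_first_chaos} (applicable once we know $X$ has an indefinite Wiener integral) or more directly by the defining property $\I_r(\1_{(0,t]})=\1_{(0,t\wedge r]}$ together with linearity and continuity, the property that the associated Cameron–Martin element is exactly $t\mapsto\E[X_t\,I(\I_r\hh)]$. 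I would first establish the purely algebraic fact that for a \emph{simple} integrand $\hh=\sum_i\alpha_i\1_{(0,t_i]}$, the Cameron–Martin element associated to $\I_r\hh$ is precisely $\inth^r:t\mapsto\inth(t\wedge r)$; this is a direct computation using $\E[X_{t\wedge r}X_s]$ and bilinearity, with no analytic subtlety.

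Next, for the implication \textbf{(i)} $\Rightarrow$ \textbf{(ii)}: assume $X$ has an indefinite Wiener integral, so $\I_r:\HH_X\to\HH_X$ is a bounded operator extending $\1_{(0,t]}\mapsto\1_{(0,t\wedge r]}$. Given $\inth\in CM_X$, pick its representative $\hh=I^{-1}(\fc)\in\HH_X$ where $\fc\in H_X$ is the element with $\E[X_\cdot\fc]=\inth$. Approximate $\hh$ in $\HH_X$ by simple integrands $\hh_n$; then $\I_r\hh_n\to\I_r\hh$ in $\HH_X$, hence $I(\I_r\hh_n)\to I(\I_r\hh)$ in $H_X$, hence the associated Cameron–Martin elements converge in $CM_X$. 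For each $n$ the associated element is $\inth_n^r$ (the simple case above), and $\inth_n\to\inth$ in $CM_X$ forces pointwise convergence $\inth_n(t)\to\inth(t)$ (evaluation at $t$ is the bounded functional $\inth\mapsto\E[X_t\fc]$, $|\cdot|\le\sqrt{R(t,t)}\,\|\inth\|_{CM_X}$); therefore $\inth_n(t\wedge r)\to\inth(t\wedge r)$ for every $t$. Since the $CM_X$-limit of $\inth_n^r$ must agree with its pointwise limit, we conclude $\inth^r\in CM_X$ with $\inth^r(t)=\inth(t\wedge r)$. This gives \textbf{(ii)}.

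For the converse \textbf{(ii)} $\Rightarrow$ \textbf{(i)}: assume $CM_X$ is closed under truncation. I need to produce a bounded operator $\I_r$ on $\HH_X$ with $\I_r\1_{(0,t]}=\1_{(0,t\wedge r]}$. Transport the problem to $H_X$: define $\widetilde E_r:H_X\to H_X$ by $\widetilde E_r\fc = $ the unique element of $H_X$ whose Cameron–Martin representative is $\inth^r$ (well-defined by hypothesis and by the fact that $CM_X\cong H_X$); this is linear and satisfies $\widetilde E_r X_t=X_{t\wedge r}$. Its boundedness is the crux. The natural route is the closed graph theorem: $\widetilde E_r$ is defined on all of $H_X$, so it suffices to check it is closed. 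If $\fc_n\to\fc$ and $\widetilde E_r\fc_n\to g$ in $H_X$ (equivalently in $L^2_X$), then for each $t$, $\E[X_t\,\widetilde E_r\fc_n]=\inth_n(t\wedge r)\to\inth(t\wedge r)$ by the same evaluation-continuity estimate, while also $\E[X_t\,\widetilde E_r\fc_n]\to\E[X_t g]$; hence $\E[X_t g]=\inth^r(t)$ for all $t$, which by the characterizing property of $CM_X$ forces $g=\widetilde E_r\fc$. Thus $\widetilde E_r$ is closed, hence bounded, and pulling back through $I$ gives the required bounded $\I_r$ on $\HH_X$. The main obstacle is precisely this boundedness argument: one must be careful that $\widetilde E_r$ is genuinely everywhere-defined (which is exactly what hypothesis \textbf{(ii)} buys us) so that the closed graph theorem applies — without \textbf{(ii)} one only gets a densely defined closable operator, not a bounded one. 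Everything else is routine bookkeeping with the three isometries and continuity of point evaluations on $CM_X$.
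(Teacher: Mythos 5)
Your overall architecture (closed graph theorem for the converse, continuity of point evaluations on $CM_X$, transport through the isometries) is the same as the paper's, but there is a genuine error at the foundation of both directions: you identify truncation of Cameron--Martin functions with the operator $\I_r$ itself, when it is in fact implemented by the \emph{adjoint} $\I_r^*$. Your ``purely algebraic fact'' is false: for $\hh=\sum_i\alpha_i\1_{(0,t_i]}$ we have $I(\I_r\hh)=\sum_i\alpha_i X_{t_i\wedge r}$, whose Cameron--Martin element is $t\mapsto\sum_i\alpha_i R(t,t_i\wedge r)$, whereas $\inth(t\wedge r)=\sum_i\alpha_i R(t\wedge r,t_i)$. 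These agree only in special cases (e.g.\ when $\I_r$ is self-adjoint, as for martingales where $\HH_X=L^2(\ud V)$ and $\I_r$ is multiplication by $\1_{(0,r]}$); for fractional Brownian motion with $H\neq\frac12$, already $\hh=\1_{(0,T]}$, $t<r<T$ gives $R(t,r)\neq R(t,T)$. Consequently your approximation argument in (i)$\Rightarrow$(ii) only shows that the Cameron--Martin element of $I(\I_r\hh)$ lies in $CM_X$ (a tautology), not that $\inth(\cdot\wedge r)$ does. The same confusion reappears in (ii)$\Rightarrow$(i): the operator $\widetilde E_r$ you define by truncating the Cameron--Martin representative satisfies $\E[\widetilde E_r\fc\cdot X_t]=\E[\fc X_{t\wedge r}]$, i.e.\ $\widetilde E_r^*X_t=X_{t\wedge r}$, \emph{not} $\widetilde E_rX_t=X_{t\wedge r}$ as you claim; so ``pulling back through $I$'' yields $\I_r^*$ rather than the required $\I_r$.

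The repair is short and is exactly what the paper does. For (i)$\Rightarrow$(ii), boundedness of $\I_r$ gives boundedness of $\I_r^*$, and
$$
\inth(t\wedge r)=\E\left[I(\hh)\,X_{t\wedge r}\right]=\la \hh,\I_r\1_{(0,t]}\ra_{\HH}=\la \I_r^*\hh,\1_{(0,t]}\ra_{\HH}=\E\left[I(\I_r^*\hh)\,X_t\right],
$$
so $\inth^r$ is the Cameron--Martin element of $I(\I_r^*\hh)\in H_X$ and hence lies in $CM_X$; no approximation by simple integrands is needed. For (ii)$\Rightarrow$(i), your closed-graph argument for the everywhere-defined operator $\Phi(\fc)=\fc^r$ (the element with Cameron--Martin representative $\inth^r$) is correct as written and does give boundedness; but you must then pass to the adjoint: $\E[\fc\,\Phi^*(X_t)]=\E[\Phi(\fc)X_t]=\E[\fc X_{t\wedge r}]$ for all $\fc\in H_X$ forces $\Phi^*(X_t)=X_{t\wedge r}$, and the bounded operator $\Phi^*$ transported to $\HH_X$ is the desired $\I_r$. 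With these two corrections your proof coincides with the paper's.
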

\begin{proof}
\textbf{(i)} $\Rightarrow$ \textbf{(ii)}: Assume that $X$ has an indefinite Wiener integral. Consequently, thanks to the isometry between $H_X$ and $\HH_X$, there exists a bounded 
linear operator $\widetilde{E}_r :H_X \mapsto H_X$ satisfying $\widetilde{E}_r\left(X_t\right) = X_{t\wedge r}$. 
Denote by $\widetilde{E}_r^*$ its adjoint operator. Then,
$$
\inth(t\wedge r)=\E \left[\fc X_{t\wedge r}\right] =\E \left[\fc \widetilde{E}_r[X_t]\right] = \E\left[\widetilde{E}_r^* [\fc] X_t\right]
$$
and consequently, for any $\underline{h} \in CM_X$ the mapping $\underline{h}^r =  \underline{h}(t\wedge r) \in CM_X$, where the associated element for $\underline{h}^r$ is given by $\widetilde{E}_r^*[\fc]$.\\
\textbf{(ii)} $\Rightarrow$ \textbf{(i)}: Suppose that \textbf{(ii)} holds. This means that for each element $\fc \in H_X$ there exists a unique element $\fc^r \in H_X$ such that 
$\E\left[\fc^r X_t\right] = \E\left[\fc X_{t\wedge r}\right]$ for every $t\in [0,T]$. Define a mapping 
$\Phi : H_X \mapsto H_X$ by $\Phi(\fc) = \fc^r$. This operator is linear and closed, and 
hence it is continuous by closed graph theorem. Consequently, the adjoint $\Phi^*$ of $\Phi$ is a bounded linear operator satisfying
$$
\E\left[\fc \Phi^*(X_t)\right] =\E\left[\Phi(\fc) X_t\right]  = \E\left[\fc X_{t\wedge r}\right],
$$
for every $h\in H_X$.
This implies that $\Phi^*(X_t) = X_{t\wedge r}$, and by isometry between $H_X$ and $\HH_X$ there exists a bounded linear operator $\I^{\HH}_r : \HH_X \mapsto \HH_X$ satisfying 
$\I^{\HH}_r\left(\1_{(0,t]}\right) = \1_{(0,t\wedge r]}$. Hence $X$ has 
an indefinite Wiener integral.
\end{proof}

We apply this characterisation to show that fractional Brownian motion $B^H$ has an indefinite Wiener integral. This fact 
and some other related results are the topic of the following theorem.
\begin{thm}
\label{thm:fbm}
\begin{enumerate}
\item Fractional Brownian motion $B^H$ with Hurst parameter $H\in(0,1)$ has an indefinite Wiener integral.
\item If $B^H$ is a fractional Brownian motion with $H>\frac12$, and $\sigma$ is a deterministic function on $[0,T]$ satisfying $\epsilon\leq \sigma\leq \epsilon^{-1}$ for some $\epsilon \in(0,1)$, 
then the fractional Wiener integral $X_t = \int_0^t \sigma(s)\ud B^H_s$ has an indefinite Wiener integral.
\item 
Suppose that $X_t = X^{(1)}_t + \gamma X^{(2)}_t$, where $\gamma \neq 0$ is a constant and $X^{(i)},i=1,2$ are independent centered Gaussian processes. 
If $X^{(1)}$ and $X^{(2)}$ have  indefinite Wiener integrals, then so does $X$.
\end{enumerate}
\end{thm}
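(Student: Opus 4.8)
The plan is to verify, in each case, the condition \textbf{(I$_{\HH}$)}: that the operator $\I_r$ defined on the linear span of the indicators by $\1_{(0,t]}\mapsto\1_{(0,t\wedge r]}$ (equivalently, on step functions, multiplication by $\1_{(0,r]}$, since $\1_{(a,b]}\mapsto\1_{(a\wedge r,b\wedge r]}$) extends to a bounded operator on $\HH_X$. Part (iii) is a direct Hilbert-space argument, part (ii) reduces to part (i), and part (i) is the analytic core, handled via the fractional-integral description of the Cameron--Martin space together with Theorem~\ref{thm:quasi_equivalent_to_check}.

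\textbf{Part (iii).} From $R_X=R_{X^{(1)}}+\gamma^2R_{X^{(2)}}$ (the cross terms vanish by independence and centeredness) one gets by bilinearity, for any step function $f$, the identity $\|f\|_{\HH_X}^2=\|f\|_{\HH_{X^{(1)}}}^2+\gamma^2\|f\|_{\HH_{X^{(2)}}}^2$, all three quantities being meaningful since step functions span dense subspaces of each space of integrands. Because $X^{(1)},X^{(2)}$ have indefinite Wiener integrals, $\|\I_rf\|_{\HH_{X^{(i)}}}\le\|\I_r\|_{op}^{(i)}\,\|f\|_{\HH_{X^{(i)}}}$ on step functions, where $\I_rf$ is again a step function; plugging this into the identity yields $\|\I_rf\|_{\HH_X}\le C\,\|f\|_{\HH_X}$ with $C=\max_i\|\I_r\|_{op}^{(i)}$. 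Hence $\I_r$ extends to a bounded operator on $\HH_X$ with $\I_r\1_{(0,t]}=\1_{(0,t\wedge r]}$, i.e.\ \textbf{(I$_{\HH_X}$)} holds. (One minor point is the quotient/completion bookkeeping, but $f\mapsto\I_r f$ clearly preserves the common null space $N_{\HH_{X^{(1)}}}\cap N_{\HH_{X^{(2)}}}$.)

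\textbf{Part (ii), granted part (i).} For $H>1/2$ step functions lie in $L^{1/H}([0,T])\subset\HH_{B^H}$ (Pipiras--Taqqu), and $\E[X_tX_s]=H(2H-1)\int_0^t\!\int_0^s\sigma(u)\sigma(v)|u-v|^{2H-2}\,du\,dv$ gives $\|f\|_{\HH_X}=\|\sigma f\|_{\HH_{B^H}}$ on step functions. Since $\varepsilon\le\sigma\le\varepsilon^{-1}$, the set $\sigma\cdot\{\text{step functions}\}$ is dense in $L^{1/H}$, hence in $\HH_{B^H}$, so $f\mapsto\sigma f$ extends to an isometric isomorphism $U\colon\HH_X\to\HH_{B^H}$. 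As $U$ commutes with multiplication by $\1_{(0,r]}$, we get $\I_r^{\HH_X}=U^{-1}\I_r^{\HH_{B^H}}U$, which is bounded once $B^H$ is known to have an indefinite Wiener integral; thus (ii) follows from (i).

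\textbf{Part (i), and the main obstacle.} By Theorem~\ref{thm:quasi_equivalent_to_check} it suffices to show $CM_{B^H}$ is closed under the time-truncation $\underline h\mapsto\underline h^r$, $\underline h^r(t)=\underline h(t\wedge r)$. Use the classical identification $CM_{B^H}=I^{H+1/2}_{0+}(L^2([0,T]))$, up to equivalent norms (Decreusefond--\"Ust\"unel, \cite{s-k-m}), so $\underline h=I^{H+1/2}_{0+}\phi$, $\phi\in L^2$. If $H>1/2$, write $\underline h=I^1_{0+}(I^{H-1/2}_{0+}\phi)$; then $\underline h^r$ is absolutely continuous with a.e.\ derivative $(I^{H-1/2}_{0+}\phi)\1_{(0,r]}$, so (by the semigroup property $I^1_{0+}I^{H-1/2}_{0+}=I^{H+1/2}_{0+}$) it is enough that $I^\gamma_{0+}(L^2)$ be closed under multiplication by $\1_{(0,r]}$ for $\gamma:=H-1/2\in(0,1/2)$. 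For $g=I^\gamma_{0+}\phi$ the Marchaud form of $D^\gamma_{0+}$ gives $D^\gamma_{0+}(g\1_{(0,r]})(s)=\phi(s)$ on $[0,r]$ and $D^\gamma_{0+}(g\1_{(0,r]})(s)=-c_\gamma\int_0^r g(u)(s-u)^{-1-\gamma}\,du$ on $(r,T]$; substituting $g=I^\gamma_{0+}\phi$, using Fubini, and the change of variables $s=r+x$, $u=r-y$, the map $\phi\mapsto D^\gamma_{0+}(g\1_{(0,r]})|_{(r,T]}$ becomes an integral operator in $\phi$ whose kernel is \emph{exactly} homogeneous of degree $-1$ in $(x,y)$, and its $L^2$-boundedness follows from the Hardy--Littlewood--P\'olya criterion $\int_0^\infty k(1,t)\,t^{-1/2}\,dt<\infty$, which is finite precisely because $\gamma<1/2$; hence $D^\gamma_{0+}(g\1_{(0,r]})\in L^2$ and $g\1_{(0,r]}\in I^\gamma_{0+}(L^2)$. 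For $0<H<1/2$ one argues directly with $\alpha:=H+1/2\in(1/2,1)$: the explicit Marchaud formula for $D^\alpha_{0+}(\underline h^r)$ on $(r,T]$ splits, after substituting $\underline h=I^\alpha_{0+}\phi$, into the bounded term $\underline h(r)\,\Gamma(1-\alpha)^{-1}s^{-\alpha}$ plus finitely many integral operators in $\phi$ with $(-1)$-homogeneous kernels again passing the Hardy--Littlewood--P\'olya test in the range $\alpha\in(1/2,1)$. Either way $D^{H+1/2}_{0+}(\underline h^r)\in L^2$, i.e.\ $\underline h^r\in CM_{B^H}$. The main obstacle is exactly this last step: because the Riemann--Liouville derivative is nonlocal, $D^{H+1/2}_{0+}(\underline h^r)$ is \emph{not} the truncation of $\phi$ but carries a correction term on $(r,T]$ that, under crude H\"older or Cauchy--Schwarz estimates, only looks like $(s-r)^{-1/2}$ and so just fails square integrability; one must recognize it as an operator with a kernel exactly homogeneous of degree $-1$ and invoke the sharp (weighted) $L^2$-estimate --- this is where the ``well-known results on fractional integrals'' are essential, and the relevant exponents stay admissible precisely in the non-martingale regime $H\ne 1/2$.
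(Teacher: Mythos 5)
Your proof is correct, but it reorganizes the argument relative to the paper in several places, and the comparison is instructive. For part (iii) you work on the side of the space of deterministic integrands, using $R_X=R_{X^{(1)}}+\gamma^2R_{X^{(2)}}$ to get the exact norm identity on step functions and hence an explicit operator bound $\Vert\I_r\Vert_{op}\le\max_i\Vert\I_r\Vert_{op}^{(i)}$; the paper instead decomposes the first chaos $H_X=H_{X^{(1)}}+H_{X^{(2)}}$, deduces $CM_X=CM_{X^{(1)}}+CM_{X^{(2)}}$, and invokes Theorem \ref{thm:quasi_equivalent_to_check} --- both are valid, and yours is arguably more quantitative. For (i)/(ii) you invert the paper's logic: the paper proves (ii) directly (via $CM_X=\{\int_0^\cdot\sigma\, I_{0+}^{H-1/2}\psi\,\ud s\}$ and Theorem 13.10 of Samko et al., which asserts precisely that $\1_{[0,r]}I_{0+}^{H-1/2}\psi\in I_{0+}^{H-1/2}(L^2)$ since $(H-1/2)^{-1}>2$) and obtains (i) for $H>1/2$ as the case $\sigma\equiv 1$, whereas you prove (i) for $H>1/2$ directly by the Marchaud/Hardy--Littlewood--P\'olya computation for $\gamma=H-1/2\in(0,1/2)$ (which in effect reproves the cited Theorem 13.10) and then transport it to (ii) through the isometry $f\mapsto\sigma f$ of $\HH_X$ onto $\HH_{B^H}$; in that step you should make explicit the small approximation argument showing that the extension $\I_r^{\HH_{B^H}}$ really acts as multiplication by $\1_{(0,r]}$ on the non-step functions $\sigma\1_{(0,t]}$ (approximate in $L^{1/H}$ and use the continuous embedding into $\HH_{B^H}$). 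For $H<1/2$ your computation is essentially identical to the paper's: the explicit candidate $\phi_r$, Fubini and the change of variables producing the degree-$(-1)$ homogeneous kernel $(y/z)^{\alpha-1}(y+z)^{-1}$, and the criterion $\int_0^\infty k(1,t)t^{-1/2}\,\ud t<\infty$ (Theorem 1.5 in Samko et al.); you correctly identify that the apparent $(s-r)^{-\alpha}$ singularity must be cancelled against the integral term before estimating, which is exactly the cancellation the paper performs. The only cosmetic omission is the trivial case $H=1/2$, which the paper dispatches via Theorem \ref{thm:martingale_change_of_measure}.
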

\begin{proof}[Proof of Theorem \ref{thm:fbm},(i)]
The case $H>\frac12$ is covered by item \textbf{(ii)} and the case $H=\frac12$ is covered by Theorem \ref{thm:martingale_change_of_measure}. Hence we treat only $H<\frac12$. 

Recall that the left-sided fractional Riemann-Liouville
integral of order $\alpha>0$ is defined as
$$
\left(I_{0+}^\alpha \phi\right)(t) = \frac{1}{\Gamma(\alpha)}\int_0^t \phi(s)(t-s)^{\alpha-1}\ud s, \quad t\in[0,T],
$$ 
for $\phi\in L^1\left([0,T],\ud t\right)$, where $\Gamma(\alpha)$ denotes the Gamma function.
By Remark 3.1 in Decreusefond and \"Ust\"unel \cite{dec-ust},
a function $y(t)$ belongs to the Cameron-Martin space of fractional Brownian motion if and only if
\begin{equation}
\label{eq:fbm_CM}
y(t) = \left(I_{0+}^{H+\frac12} \phi\right)(t)
\end{equation}
for some $\phi \in L^2\left([0,T],\ud t\right)$. By item \textbf{(ii)} of Theorem \ref{thm:quasi_equivalent_to_check},
it suffices to prove the following: For every $\phi \in L^2\left([0,T],\ud t\right)$ and $r\in [0,T]$, there
is a $\phi_r\in L^2\left([0,T],\ud t\right)$ such that for every $t\in[0,T]$
$$
\left(I_{0+}^{H+\frac12} \phi_r\right)(t)=\textbf{1}_{[0,r]}(t) \left(I_{0+}^{H+\frac12} \phi\right)(t)
+ \textbf{1}_{(r,T]}(t) \left(I_{0+}^{H+\frac12} \phi\right)(r)=:y_r(t).
$$
For the rest of the proof we use short notation $\alpha = H+\frac12$. Set 
\begin{equation*}
\phi_r(s) = \begin{cases} 
\phi(s), &\mbox{if } 0\leq s\leq r\\
-\frac{\alpha}{\Gamma(1-\alpha)}\int_0^r \left(I_{0+}^{\alpha}\phi\right)(u)(s-u)^{-1-\alpha}\ud u + C(s-r)^{-\alpha}, &\mbox{if } r<s\leq T
\end{cases}
\end{equation*}
where the constant $C = C(\alpha,r)$ is given by 
$$
C = \frac{1}{\Gamma\left(1-\alpha\right)} \left(I_{0+}^{\alpha} \phi\right)(r).
$$
By Theorem 13.10 and  (2.44) in Samko et al. \cite{s-k-m}
we have $\phi_r\in L^p([0,T],\ud t)$ for $p<\alpha^{-1}=(H+1/2)^{-1}$ and 
$$
\left(I_{0+}^{\alpha}\phi_r\right)(t) = y_r(t).
$$
Hence, it remains to show that $\phi_r \in L^2\left([0,T],\ud t\right)$.  For $x>r$ we have
\begin{equation*}
\begin{split}
\phi_r(x) &= - \frac{\alpha}{\Gamma(1-\alpha)}\int_0^r \left(I^{\alpha}_{0+}\phi\right)(t)(x-t)^{-1-\alpha}\ud t + C(x-r)^{-\alpha} \\
&=  - \frac{\alpha}{\Gamma(1-\alpha)\Gamma(\alpha)}\int_0^r \int_0^t \phi(s)(t-s)^{\alpha-1}\ud s (x-t)^{-1-\alpha}\ud t\\
& + \frac{1}{\Gamma(1-\alpha)\Gamma(\alpha)}\int_0^r \phi(s)(r-s)^{\alpha-1}(x-r)^{-\alpha}\ud s.
\end{split}
\end{equation*}
Following proof of Theorem 13.10 in Samko et al. \cite{s-k-m} we obtain by using Fubini's theorem and change of variable that the first term is given by 
\begin{equation*}
\begin{split}
&- \frac{\alpha}{\Gamma(1-\alpha)\Gamma(\alpha)}\int_0^r \int_0^t \phi(s)(t-s)^{\alpha-1}\ud s (x-t)^{-1-\alpha}\ud t \\
&= -  \frac{1}{\Gamma(1-\alpha)\Gamma(\alpha)}\int_0^r \phi(s)\left(\frac{r-s}{x-r}\right)^{\alpha} \frac{1}{x-s} \ud s.
\end{split}
\end{equation*}
Consequently, for $x>r$,
\begin{equation*}
\begin{split}
\phi_r(x) &= \frac{1}{\Gamma(1-\alpha)\Gamma(\alpha)}\int_0^r \phi(s)\left[(x-r)^{-\alpha}(r-s)^{\alpha-1} - \left(\frac{r-s}{x-r}\right)^{\alpha}\frac{1}{x-s}\right]\ud s\\
&= \frac{1}{\Gamma(1-\alpha)\Gamma(\alpha)}\int_0^r \phi(s)\left(\frac{r-s}{x-r}\right)^{\alpha-1}\frac{1}{x-s}\ud s.
\end{split}
\end{equation*}
Hence $\phi_r(s) \in L^2\left([0,T],\ud t\right)$ by Theorem 1.5 in Samko et al. \cite{s-k-m} (see also the proof of Theorem 13.10
in the same reference) by observing that in this case the kernel $k(x,t)$ of Theorem 1.5 is given by 
$$
k(x,t) = \left(\frac{t}{x}\right)^{\alpha-1}(t+x)^{-1}
$$
which satisfies 
$$
\int_0^\infty |k(x,1)|x^{-\frac12}\ud x = \int_0^\infty |k(1,t)|t^{-\frac12}\ud t < \infty
$$
by the fact $\alpha-1 = H-\frac12\in (-\frac12,0)$. This completes the proof.
\end{proof}

\begin{proof}[Proof of Theorem \ref{thm:fbm},(ii)]
By \cite{bender}, the Cameron-Martin space of $X$ is 
$$
CM_X = \left\{\int_0^{\cdot}\sigma(s)I_{0+}^{H-\frac12}\left(\psi\right)(s)\ud s,\quad \psi \in L^2\left([0,T],\ud t\right)\right\}.
$$
In view of Theorem \ref{thm:quasi_equivalent_to_check}, it suffices to show that, for every $\psi \in L^2\left([0,T],\ud t\right)$ 
and $r\in[0,T]$ 
there is a $\psi_r \in L^2\left([0,T],\ud t\right)$ satisfying
$$
\int_0^{t\wedge r} \sigma(s)I_{0+}^{H-\frac12}\left(\psi\right)(s)\ud s =
\int_0^{t} \sigma(s)I_{0+}^{H-\frac12}\left(\psi_r\right)(s)\ud s 
$$
for every $t\in[0,T]$.
As $H\in(\frac12,1),$ we have $\left(H-\frac12\right)^{-1} >2$, and consequently we
can apply Theorem 13.10 in \cite{s-k-m} to conclude that, given $\psi \in L^2\left([0,T],\ud t\right)$ 
and $r\in[0,T]$, there exists $\psi^r \in L^2([0,T],\ud t)$ such that
$$
\textbf{1}_{[0,r]}(s)I_{0+}^{H-\frac12}\left(\psi\right)(s) = I_{0+}^{H-\frac12}\left(\psi^r\right)(s),\quad 0\leq s\leq T.
$$
 This completes the proof.
\end{proof}
\begin{proof}[Proof of Theorem \ref{thm:fbm},(iii)]
Without loss of generality we can assume that $\gamma=1$. Let 
$$
\fc_n = \sum_{k=1}^n \alpha_k^{(n)} X_{t_k^{(n)}} = \sum_{k=1}^n \alpha_k^{(n)} X^{(1)}_{t_k^{(n)}} + \sum_{k=1}^n \alpha_k^{(n)} X^{(2)}_{t_k^{(n)}} =: \fc_n^{(1)} + \fc_n^{(2)}
$$
be a sequence of simple random variables converging to $\fc \in H_X$ in $L^2_X$.
By taking conditional expectation with respect to $\mathcal{F}_T^{X^{(1)}}$ and using independence we 
deduce that $\fc_n^{(1)}$ converges in $L^2_{X^{(1)}}$ to some random variable $\fc^{(1)} \in H_{X^{(1)}}$, 
and using similar analysis for $X^{(2)}$ we note that each element $\fc \in H_X$ can be represented as 
$\fc = \fc^{(1)} +\fc^{(2)}$ for some $\fc^{(1)} \in H_{X^{(1)}}$ and $\fc^{(2)} \in H_{X^{(2)}}$. 
As an immediate consequence of the independence of $X^{(1)}$ and $X^{(2)}$, we then observe that
$$
CM_X=\left\{ \inth_1+\inth_2;\; \inth_1\in CM_{X^{(1)}},\;\inth_2\in CM_{X^{(2)}} \right\}.
$$
Now, the assertion is a direct consequence of Theorem \ref{thm:quasi_equivalent_to_check} together with the assumption that $X^{(1)}$ and $X^{(2)}$ have indefinite Wiener integrals.
\end{proof}

\section*{Acknowledgements}
Lauri Viitasaari was partially funded by Emil Aaltonen Foundation.

\appendix

\section{On Example \ref{exa:BSDE}}
\label{sec:A}
Let $X$ be a fractional Brownian motion with Hurst index $H<\frac14$. 
We claim that then $t^{2H} \in \HH_X$,
which we applied in Example \ref{exa:BSDE}. 
By Pipiras and Taqqu \cite[Theorem 4.2]{piptaq}, $t^{2H}\in \HH_X$ if and only if
\begin{equation}
\label{eq:appendix_needed}
t^{2H} = t^{\frac12 - H} \left(I_{T-}^{\frac12 - H} \cdot^{H-\frac12}g(\cdot)\right)(t) 
\end{equation}
for some $g\in L^2\left([0,T],\ud t\right)$, or equivalently,
$$
t^{3H- \frac12} = \left(I_{T-}^{\frac12 - H} \cdot^{H-\frac12}g(\cdot)\right)(t).
$$
We first compute the fractional derivative $\left(D_{T-}^{\frac12 - H}\cdot^{3H-\frac12}\right)(t) = \left(I_{T-}^{H-\frac12}\cdot^{3H-\frac12}\right)(t)$. Using relation (2.19) and equation (2.46) in Samko et al. \cite{s-k-m} 
with $a=0$, $b=T$, $\alpha = H-\frac12$, $\beta=1$ and $\gamma = 3H+\frac12$
we obtain
\begin{eqnarray}
&&\left(I_{T-}^{H-\frac12} \cdot^{3H-\frac12}\right)(t) =\left(I_{0+}^{H-\frac12} \left(T-\cdot\right)^{3H-\frac12}\right)(T-t) \nonumber\\
\label{eq:appendix_g}&&= \frac{T^{3H-\frac12}}{\Gamma\left(H+\frac12\right)}(T-t)^{H-\frac12} {}_{2}F_1\left(\frac12 - 3H, 1, H+\frac12, \frac{T-t}{T}\right),
\end{eqnarray}
where $_{2}F_1(a,b,c,z)$ denotes the Gauss hypergeometric function. Furthermore, using (2.19) in Samko et al. \cite{s-k-m} repeatedly we obtain
\begin{equation}
\label{eq:appendix_reflection}
\left(I_{T-}^{\frac12 - H}I_{T-}^{H-\frac12}\cdot^{3H-\frac12}\right)(t) = \left(I_{0+}^{\frac12-H}I_{0+}^{H-\frac12}\left(T-\cdot\right)^{3H-\frac12}\right)(T-t).
\end{equation}
We next wish to apply (2.61) in Samko et al. \cite{s-k-m} in order to conclude that
\begin{equation}
\label{eq:appendix_int_der}
\left(I_{0+}^{\frac12-H}I_{0+}^{H-\frac12}\left(T-\cdot\right)^{3H-\frac12}\right)(T-t) = t^{3H-\frac12}.
\end{equation}
To this end, note that, again by (2.46) in Samko et al. \cite{s-k-m} with $a=0$, $b=T$, $\alpha =\frac12 + H$, $\beta = 1$, and $\gamma = 3H+\frac12$, we have
$$
\left(I_{0+}^{\frac12 + H}\left(T-\cdot\right)^{3H-\frac12}\right)(t) = \frac{T^{3H-\frac12}}{\Gamma\left(\frac32 + H\right)}t^{\frac12-H} {}_{2}F_1\left(\frac12 - 3H,1,\frac32 + H,\frac{t}{T}\right).
$$
As $_{2}F_1\left(a,b,c,z\right)$ is analytic for $|z|\leq 1$ and vanishes at $z=0$, whenever $c-b-a>0$, we conclude 
that $\left(I_{0+}^{\frac12 + H}\left(T-\cdot\right)^{3H-\frac12}\right)$ is summable in the sense of Definition 2.4 in \cite{s-k-m}
and vanishes at $t=0$. Hence, (2.61) in \cite{s-k-m} is indeed applicable and implies (\ref{eq:appendix_int_der}). 
Combining \eqref{eq:appendix_g}, \eqref{eq:appendix_reflection}, and \eqref{eq:appendix_int_der} we obtain that the function $g$ given by 
$$
g(t) = t^{\frac12-H}\frac{T^{3H-\frac12}}{\Gamma\left(H+\frac12\right)}(T-t)^{H-\frac12} {}_{2}F_1\left(\frac12 - 3H, 1, H+\frac12, \frac{T-t}{T}\right)
$$
satisfies \eqref{eq:appendix_needed}. To conclude we have to prove $g \in L^2\left([0,T],\ud t\right)$. 
We use Euler's transformation formula
$
_{2}F_1(a,b,c,z) = (1-z)^{c-b-a} _{2}F_1(c-a,c-b,c,z)
$
to get
$$
g(t) = t^{3H-\frac12}\frac{T^{\frac12-H}}{\Gamma\left(H+\frac12\right)}(T-t)^{H-\frac12} {}_{2}F_1\left(4H, H-\frac{1}{2}, H+\frac12, \frac{T-t}{T}\right).
$$
Now ${}_{2}F_1\left(4H, H-\frac{1}{2}, H+\frac12, \frac{T-t}{T}\right)$ is continuous since
$$
H+\frac12 - \left(H-\frac12\right)- 4H = 1-4H >0,
$$ and hence $g \in L^2\left([0,T],\ud t\right)$.

\bibliographystyle{plain}
\bibliography{co}
\end{document}